\theoremstyle{plain}
\newtheorem{theorem}{Theorem}
\newtheorem{lemma}[theorem]{Lemma}
\newtheorem{proposition}[theorem]{Proposition}
\newtheorem{corollary}[theorem]{Corollary}
\newtheorem{conjecture}[theorem]{Conjecture}
\theoremstyle{definition}
\newtheorem{definition}[theorem]{Definition}
\newtheorem{example}[theorem]{Example}
\theoremstyle{remark}
\newtheorem*{remark}{Remark}
\newtheorem*{acknowledgments}{Acknowledgments}
\newcommand{\Z}{\mathbb Z}    
\newcommand{\R}{\mathbb R}    
\newcommand{\C}{\mathbb C}    
\newcommand{\PP}{\mathbb P}   
\newcommand{\T}{\mathbb T}    
\renewcommand{\O}{\mathcal O}   
\newcommand{\suchthat}{\ : \ }
\newcommand{\<}{\langle}   
\renewcommand{\>}{\rangle} 
\newcommand{\Log}{\operatorname{Log}}
\newcommand{\Arg}{\operatorname{Arg}}
\newcommand{\am}{{\mathcal{A}}}
\newcommand{\ignore}[1]{\relax}
\newcommand{\rk}{\operatorname{rk}}
\newcommand{\TT}{\mathbb{T}}
\newcommand{\chull}{\operatorname{Conv}}
\newcommand{\RR}{\mathcal{R}}
\newcommand{\pp}{\mathcal{P}}
\newcommand{\cc}{\mathcal{C}}
\newcommand{\hh}{\mathcal{H}}
\renewcommand{\tt}{\mathcal{T}}
\newcommand{\tropvar}[1]{\mathcal{H}_{#1}}
\newcommand{\tropstrat}[2]{\mathcal{H}_{{#1}, {#2}}}
\newcommand{\tropstrats}[3]{\mathcal{T} \mathcal{H}_{{#1}, {#2}, {#3}}}
\newcommand{\ctropstrats}[3]{\mathcal{T} \bar{\mathcal{H}}_{{#1}, {#2}, {#3}}}
\newcommand{\comhyp}[1]{H_{#1}}
\newcommand{\ccomhyp}[1]{\bar{H}_{#1}}
\newcommand{\comstrat}[2]{H_{{#1}, {#2}}}
\newcommand{\ccomstrat}[2]{\bar{H}_{{#1}, {#2}}}
\newcommand{\coam}[1]{\mathcal{C}_{#1}}
\newcommand{\tphyp}[2]{\mathcal{T}\mathcal{H}_{#1, #2}}
\newcommand{\ctphyp}[2]{\mathcal{T}\bar{\mathcal{H}}_{#1,#2}}
\begin{document}

\title{Phase tropical hypersurfaces}

\author{Gabriel Kerr and Ilia Zharkov}
\address{Kansas State University, 138 Cardwell Hall, Manhattan, KS 66506}
\email{gdkerr@ksu.edu, zharkov@ksu.edu}

\begin{abstract}
We prove a conjecture of Viro \cite{Viro} that a smooth complex hypersurface in $(\C^*)^n$ is homeomorphic to the corresponding phase tropical hypersurface.
\end{abstract}
\maketitle
\section{Introduction}
Consider a hypersurface $H_f\subset (\C^*)^n$ defined by a Laurent polynomial 
$$f = \sum_{a \in A} c_a z^a,$$
where $A\subset \Z^n$ is the set of monomials. Let $Q$ be its Newton polytope, that is, the convex hull of $A$. Given a function $\eta: A \to \R$, whose upper graph induces a triangulation of $Q$, one considers the associated phase tropical hypersurface $\tt\hh_\eta \subset \R^n \times \TT^n \cong (\C^*)^n$. This is a polyhedral object which surjects onto the tropical hypersurface $\hh_\eta \subset \R^n$. 
Over the relative interior of a face of $\hh_\eta$ dual to a simplex $Q'$ in the triangulation, the fiber of this surjection is the coamoeba of the truncated hypersurface 
 $$\sum_{a \in \operatorname{vert} Q'} c_az^a=0.
 $$
Our main result (cf. Theorem \ref{thm:maintheorem}) states that for a generic polynomial $f$, the complex  hypersurface $H_f$ is homeomorphic to the phase tropical hypersurface $\tt\hh_\eta$, which was a conjecture of Viro \cite{Viro}. 

In Section \ref{sec:H} we reduce the case of a general hypersurface to finite abelian coverings of a pair-of-pants using Viro's patchworking \cite{Viro83} and a non-unimodular version of the Mikhalkin's pair-of-pants decomposition \cite{PP}. 
Thus, the core of the proof is the case of pair-of-pants.
The closures $\bar P^{n-1}$ and $\bar \tt\pp^{n-1}$ of the corresponding pairs-of-pants carry natural stratifications from the ambient space $\Delta\times \TT^n$. The key technical result of Section~\ref{sec:PP} is that the closed strata are balls.
A homeomorphism $\bar P^{n-1} \approx \bar \tt\pp^{n-1}$ then follows from an isomorphism between the two regular CW-complexes, one for~$\bar P^{n-1}$ and the other for~$\bar \tt\pp^{n-1}$. 

In the final stages of writing the paper we were made aware of an announcement by Kim and Nisse of similar results in Theorem 1.1 and Proposition 5.2 of \cite{KN16}. 

\begin{acknowledgments}
We would like to thank Jianting Huang, Grisha Mikhalkin, Mounir Nisse, David Nadler, Nick Sheridan, Oleg Viro and Peng Zhou for very fruitful conversations on the subject of the paper. Christian Haase shared some essential ideas on how to prove that a polyhedral complex is homeomorphic to the ball. We would also like to thank Dave Auckly, Igor Belegradek and Andreas Thom for helping us with the cobordism argument in the proof of Proposition \ref{prop:the_ball}. Finally we would like to thank the anonymous referees for pointing out some mistakes and suggesting several improvements of the original manuscript. G.K. acknowledges support by the Simons collaboration grant. The research of I.Z. is partially supported by the NSF FRG grant DMS-1265228. 
\end{acknowledgments}

\section{Pair-of-pants}\label{sec:PP}
The main result of this section is a homeomorphism between the complex pair-of-pants and the phase tropical pair-of-pants, cf. Theorem  \ref{thm:homeo}. The idea is to endow both spaces with structures of regular CW-complexes which are isomorphic.

\subsection{Notations} 
Throughout the paper we identify $\C^*$ with $\R\times (\R/2\pi \Z)$. In particular, we will identify $(\C^*)^{n+1}/\C^*$ with 
$$\left( \R^{n+1}/\R \right) \times  \left( (\R/2\pi \Z)^{n+1}  /(\R/2\pi \Z ) \right),
$$ 
where both $\R$ and $\R/2\pi \Z$ act diagonally.
We denote the second factor by
$$\TT^n := (\R/2\pi \Z)^{n+1}/(\R/2\pi \Z) \cong \R^n/2\pi\Z^n.
$$ 

We will use homogeneous (additive for the last two cases) coordinates 
\begin{equation}
\begin{gathered}
\label{eq:coordinates}
[z_0, \dots, z_n] \text{ in } (\C^*)^{n+1}/\C^*,\\
[x_0, \dots, x_n] \text{ in } \R^{n+1}/\R,\\
[\theta_0, \dots, \theta_n] \text{ in } \TT^n.
\end{gathered}
\end{equation}
An element in $\T^n$ can be thought of as a configuration of marked points $\theta_{0}, \theta_{1}, \dots, \theta_{n}$ on the unit circle up to simultaneous rotation.

Let $\hat n$ denote the set $\{0,\dots,n\}$. For any subset $I\subseteq \hat n$ we denote by $I^c$ its complement. We denote  by $\pi_I=[\theta_0, \dots, \theta_n]$ the point in $\TT^n$ with coordinates
\begin{equation}\label{eq:pi}
\theta_i= \
\begin{cases} 
\pi,\  i \in I,\\ 
0, \ i \not \in I.
\end{cases}
\end{equation}
The points $\pi_I$ and $\pi_{I^c}$ coincide. 
The origin $[0, \dots, 0]$ is denoted by $0$.

Let 
$$\Delta:=\left\{(y_0, \dots, y_n) \in \R^{n+1} \suchthat y_i\ge 0, \sum y_i =1 \right\}
$$ 
be the standard $n$-simplex. For a non-empty subset $J\subseteq \hat n$ the face $\Delta_J$ of $\Delta$ is defined by $y_i=0, i \in J^c$. We will identify $\R^{n+1}/\R$ with the interior of $\Delta$ via the map
\begin{equation}\label{eq:moment}
[x_0,  \dots , x_n] \mapsto 
\left(
\frac{e^{x_0}}{e^{x_0}+\dots+e^{x_n}},\dots, \frac{e^{x_n}}{e^{x_0}+\dots+e^{x_n}}
\right).
\end{equation}
Multiplying by the factor $\TT^n$ leads to a compactification of $(\C^*)^{n+1}/\C^*$ to the 
space $\Delta \times \TT^n$. For any subset $Y\subset (\C^*)^{n+1}/\C^*$ we define its {\bf compactified} version $\bar Y$ to be the closure of $Y$ in $\Delta \times \TT^n$ via the map \eqref{eq:moment} above.

\subsection{The face lattice $\mathcal W$ of the future CW-complex} 
We say that 
$\sigma =\< I_1, \dots, I_k\>$
 is a {\bf cyclic partition} of the set $\hat n= \{0,\dots,n\}$ if  $\hat n$ is a disjoint union of the sets $I_1, \dots, I_k$ and the sets $I_1, \dots, I_k$ are cyclically ordered. The elements within each $I_s$ are not ordered. If all $I_s$ are 1-element sets then we simply write $\sigma=\<i_0, \dots, i_n\>$.
Our main source of cyclic partitions of $\hat n$ will be configurations of marked points $\theta_{0}, \theta_{1}, \dots, \theta_{n}$ on the oriented circle. 

The set of hyperplanes $\theta_i=\theta_j,\ i,j\in \hat n$, stratifies the torus $\T^n$ with strata $\T^n_\sigma$ labeled by cyclic partitions $\sigma$. 
On the other hand the simplex $\Delta$ has a natural stratification by its faces $\Delta_J$. The product of the two stratifications induces a stratification on any closed subset $\bar Y \subseteq  \Delta \times \T^n$. The strata $Y_{\sigma,J}$ of $\bar Y$ are labeled by the pairs $(\sigma, J)$, where $\sigma =\<I_1, \dots, I_k\>$ is a cyclic partition of $\hat n$ and $J \subseteq \hat n$. 
The inclusion of the strata closures $\bar Y_{\sigma',J'}\subseteq \bar Y_{\sigma,J}$ gives a partial order among the pairs: $(\sigma', J') \preceq (\sigma, J)$ if $\sigma$ is a refinement of $\sigma'$ (we write $\sigma'\preceq \sigma$) and $J'\subseteq J$.

To simplify notations we will often drop the index $J$ from the subscript if $J=\hat n$.
For any non-empty subset $J\subseteq \hat n$ a cyclic partition $\sigma=\< I_1, \dots, I_k\>$ of $\hat n$ induces a cyclic partition $\sigma_J=\< J_1, \dots, J_r\>$ of $J$ by intersecting each $I_s$ with $J$. We will drop the empty intersections and shift the indices, in this case $r$ will be smaller than $k$. 

Our main focus will be on the poset $\mathcal W$ which consists of pairs $(\sigma, J)$ such that $J$ contains elements in at least two of the subsets $I_1, \dots, I_k$ of $\sigma=\<I_1, \dots, I_k\>$. 
In this case we say that $\sigma$ {\bf divides} $J$ and write $\sigma | J$. This, in particular, means that $k\ge 2$ and $|J|\ge 2$.  We set the rank function to be 
$$\rk (\sigma,J) := k + |J| -4.
$$ 
The poset $\mathcal W$ will be the face lattice of our regular CW-complex and $\rk (\sigma,J)$ will be the dimension of the $(\sigma,J)$-cell.

\begin{conjecture}\label{conj:grunbaum}
For each element $(\sigma,J)\in \mathcal W$ its lower interval $\mathcal W_{\preceq(\sigma,J)} :=\{(\sigma',J')\in \mathcal W \suchthat (\sigma',J') \preceq (\sigma, J)\}$ is isomorphic to the face lattice of a simple polytope.
\end{conjecture}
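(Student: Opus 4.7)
The plan is to reduce Conjecture \ref{conj:grunbaum} to checking two conditions: a combinatorial criterion on $\mathcal W_{\preceq(\sigma,J)}$ ensuring that any polytopal realization must be simple, and an explicit geometric realization of the poset as the face lattice of a convex polytope, which I would obtain from the phase tropical cell structure developed in the rest of Section~\ref{sec:PP}.

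For the combinatorial criterion, augment $\mathcal W_{\preceq(\sigma,J)}$ by a formal minimum $\hat 0$ below every rank-$0$ element and verify that each open upper interval $\bigl[(\sigma',J'),(\sigma,J)\bigr]$ is a Boolean lattice. The atoms above $(\sigma',J')$ in this interval come in two independent flavors. First, reinsert one of the $k-k'$ cyclic separators of $\sigma$ that were merged in passing to the coarsening $\sigma'$; this splits one part of $\sigma'$ into two parts consistently with $\sigma'\preceq\sigma$. Second, adjoin one of the $|J|-|J'|$ elements of $J\setminus J'$ to $J'$. Since splitting a part of a cyclic partition only increases the number of parts met by $J'$, and since enlarging $J'$ only helps the divide condition, every subset of these $k-k'+|J|-|J'|$ atoms yields a valid element of $\mathcal W_{\preceq(\sigma,J)}$. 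Hence the interval is the product of two Boolean lattices, and in particular Boolean; this is the combinatorial signature of simpleness.

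For the geometric realization, I would construct $P_{(\sigma,J)}$ as the polytope underlying the closed phase tropical stratum $\bar Y_{\sigma,J}$ sitting inside $\Delta_J\times\overline{\TT^n_\sigma}$. Parameterize $\overline{\TT^n_\sigma}$ by angular gap coordinates $\delta_1,\dots,\delta_k\ge 0$ with $\sum_s\delta_s=2\pi$ together with the cyclohedral truncations $\delta_s\le\pi$, and parameterize $\Delta_J$ by weights $\mu_j\ge 0$ with $\sum_j\mu_j=1$. In suitable phase tropical (piecewise-linear) coordinates on the torus, the coamoeba balance $\sum_{j\in J}\mu_j e^{i\theta_j}=0$ linearizes to a pair of affine equations, so the candidate polytope is the intersection of $\Delta_J\times\overline{\TT^n_\sigma}$ with two hyperplanes and finitely many cyclohedral half-spaces. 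Its facets fall into two families matching the two atom types: the loci $\delta_s=0$ (coarsening $\sigma$) and the loci $\mu_j=0$ (shrinking $J$), both living in $\mathcal W_{\preceq(\sigma,J)}$ at rank $\rk(\sigma,J)-1$.

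The principal obstacle is proving that this candidate really is a convex simple polytope with the predicted face lattice, rather than merely a polytopal-combinatorial sphere. I would argue inductively on $\rk(\sigma,J)=k+|J|-4$: the base case is a point, and in the inductive step each facet $P_{(\sigma',J)}$ or $P_{(\sigma,J\setminus\{j\})}$ is by hypothesis a simple polytope of dimension one less. Combining the inductively polytopal structure on the facets with the key technical fact established in Section~\ref{sec:PP} that $\bar Y_{\sigma,J}$ is a ball, and with the Boolean upper-interval property from the first step, forces the gluing pattern of the facets to be combinatorially that of a simple polytope. This identifies $\mathcal W_{\preceq(\sigma,J)}$ with the face lattice of the simple polytope $P_{(\sigma,J)}$, as conjectured.
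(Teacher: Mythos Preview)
The statement you are addressing is labeled a \emph{Conjecture} in the paper, and the paper does not prove it. After stating it the authors only note that the Boolean interval property forces any hypothetical polytope to be simple, verify the cases $n=2,3$ by hand, and remark that even a linear realization of $\mathcal W_{\preceq\sigma}$ is ``already interesting for $n=2$ and $3$''. So there is no ``paper's own proof'' to compare to; the question is whether your proposal actually establishes the conjecture.

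Your first step, the Boolean property of every interval $[(\sigma',J'),(\sigma,J)]$, is correct and is exactly what the paper observes. But this only tells you that \emph{if} the poset is a polytope face lattice then the polytope is simple; it does not produce a polytope.

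The substantive gaps are in the second and third steps. In the geometric realization you assert that ``in suitable phase tropical (piecewise-linear) coordinates on the torus, the coamoeba balance $\sum_{j\in J}\mu_j e^{i\theta_j}=0$ linearizes to a pair of affine equations''. That is precisely the hard point: the balance condition is transcendental in the $\theta$-coordinates, and neither the complex strata $\bar\Phi_{\sigma,J}$ nor the phase tropical strata $\bar\Psi_{\sigma,J}$ constructed in Section~\ref{sec:PP} are cut out by affine equations. The paper shows they are topological (indeed PL) balls, not that they are convex polytopes, and the authors explicitly flag the linear realization as an open problem. Your inductive argument then compounds the issue: knowing that each $\bar\Psi_{\sigma,J}$ is a PL ball with Boolean upper intervals and that its facets are, by induction, simple polytopes does \emph{not} force the face poset to be polytopal. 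Non-polytopal PL spheres exist (dualizing non-polytopal simplicial spheres gives ``simple'' CW spheres that are not boundaries of simple polytopes), so the step ``ball $+$ polytopal facets $+$ Boolean intervals $\Rightarrow$ polytope'' is invalid without a genuine convex realization. As written, your argument would at best reprove Proposition~\ref{prop:tropicalCW} (that the $\bar\Psi_{\sigma,J}$ are balls), not the conjecture.
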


\noindent
It is clear that for any pair $(\sigma', J') \preceq (\sigma, J)$ the interval $[(\sigma', J'), (\sigma, J)]$ is Boolean, which means that the polytope would have to be simple. 
The conjecture is manifest for $n=2$: maximal faces $\mathcal W_{\preceq \sigma}$ are hexagons. For $n=3$ each maximal face $\mathcal W_{\preceq \sigma}$ is the 4-dimensional polytope with 20 vertices and 8 facets, dual to $P^8_{35}$, one of the 37 simplicial polytopes on 8 vertices classified by Gr\"unbaum and Sreedharan \cite{GS67}. The next problem would be to realize $\mathcal W_{\preceq \sigma}$ inside a linear space, which is already interesting for $n=2$ and 3.

\subsection{Complex pair-of-pants as a CW complex} 
The $(n-1)$-dimensional {\bf pair-of-pants} $P^{n-1}$ is the complement of $n+1$ generic hyperplanes in $\C\PP^{n-1}$. 
By an appropriate choice of coordinates we can identify $P^{n-1}$ with the affine hypersurface in $(\C^*)^{n+1}/\C^*$ given by the homogenous equation 
$$z_0+z_1+\dots + z_n=0.
$$ 
We define the {\bf compactified pair-of-pants} $\bar P^{n-1}$ to be the closure of $P^{n-1}$ in $\Delta \times \TT^n$ via the map \eqref{eq:moment}. This is a manifold with corners, and it can be thought of as a real oriented blow-up of $\C\PP^{n-1}$ along its intersections with the coordinate hyperplanes in $\C\PP^n$.

We can view points in $P^{n-1}$ as closed oriented broken lines with $n+1$ marked segments in the plane defined up to rigid motions and scaling. The segments represent the complex numbers $z_0, \dots, z_n$. In the compactification $\bar P^{n-1}$ the broken lines may have sides of zero length but with directions still recorded.

Recall that the $(\sigma,J)$-stratification on $\Delta \times \T^n$ induces a stratification on any closed subset of $\Delta \times \T^n$, in particular on $\bar P^{n-1}$. We denote by $\Phi_{\sigma, J}$ the corresponding stratum of $\bar P^{n-1}$ and by $\bar \Phi_{\sigma, J}$ its closure in $\bar P^{n-1}$.

For $[z_0,\dots, z_n]$, a point in a stratum $\Phi_{\sigma, J}$ of $\bar P^{n-1}$, one can rearrange the variables such that their arguments are (partially) ordered counter-clockwise on the circle. The order of the $z_i$ is defined up to permutations within the subsets $I_s$ in $\sigma=\<I_1, \dots, I_k\>$.
Then the circuit of vectors $z_{i_0}, \dots, z_{i_n}$ forms a convex (possibly degenerate) polygon $\mathcal D$ in the plane (see Fig.~\ref{fig:polygon}). The vertices of $\mathcal D$ separate the subsets $I_s$ in~$\sigma$. 
\begin{figure}[h]
\centering
\includegraphics[height=35mm]{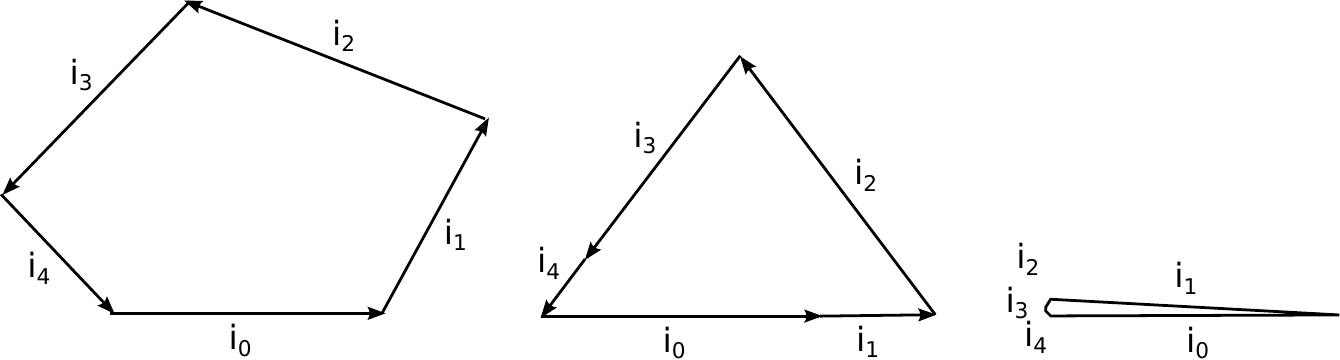}
\caption{Polygons represent points in $\Phi_{\<i_0, \dots, i_4\>}$, $\Phi_{\<\{i_0,  i_1\}, i_2, \{i_3, i_4\}\>}$ and $\Phi_{\<i_0, \dots, i_4\>, J=\{i_0,i_1\}}$.}
\label{fig:polygon}
\end{figure}

One can deform a polygon $\mathcal D$ representing a point in $\Phi_{\sigma, J}$ by ``bending'' its edges within each $I_s$ (that refines $\sigma$) and introducing small lengths for zero edges (that increases $J$). Thus we have the following observation.

\begin{proposition}\label{prop:boundary}
A closed stratum $\bar \Phi_{\sigma, J}$ contains  $\Phi_{\sigma', J'}$ if and only if $(\sigma', J') \preceq (\sigma, J)$. 
\end{proposition}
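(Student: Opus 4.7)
My plan is to split the biconditional into two implications. The ``only if'' direction follows formally from the ambient product stratification on $\Delta \times \TT^n$: the stratum $\Phi_{\sigma, J}$ sits inside $\Delta_J \times \TT^n_\sigma$ and its closure in $\Delta \times \TT^n$ is contained in
\[
\overline{\Delta_J \times \TT^n_\sigma} = \bigcup_{(\sigma', J') \preceq (\sigma, J)} \Delta_{J'} \times \TT^n_{\sigma'};
\]
by disjointness of the strata $\Phi_{\sigma'', J''}$, the containment $\Phi_{\sigma', J'} \subseteq \bar \Phi_{\sigma, J}$ then forces $(\sigma', J') \preceq (\sigma, J)$.

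The ``if'' direction is the substantive part. Using transitivity of closure I would reduce to checking the two types of cover relations in the poset: (a) $(\sigma', J') \prec (\sigma', J' \cup \{i\})$ for $i \notin J'$ (turning on a previously zero edge), and (b) $(\sigma', J') \prec (\sigma, J')$ where $\sigma$ refines $\sigma'$ by splitting one class $I'_s$ into two cyclically adjacent sub-classes (bending). Given a polygon representative $p' \in \Phi_{\sigma', J'}$ with edges $z'_i = r'_i e^{i\theta'_i}$ satisfying $\sum z'_i = 0$, I would build a smooth family $p_t \in \Phi_{\sigma, J}$ converging to $p'$ as $t \to 0^+$. In case (a) I would set $r_i(t) = t$ and perturb the remaining lengths by $\epsilon_j(t) = O(t)$ to maintain polygon closure; in case (b) I would set the angles $\theta_i(t) = \phi'_s + t\eta_{a(i)}$ for $i \in I'_s$, with $\eta_a, \eta_b$ chosen to have the correct cyclic order for $\sigma$, and again correct the lengths.

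The main technical point in both cases is that the linearized closure map
\[
(\epsilon_j)_{j \in J'} \mapsto \sum_{j \in J'} \epsilon_j e^{i\theta'_j}
\]
is surjective onto $\C$, which allows the implicit function theorem to furnish the required correction. This surjectivity holds because the edges of $p'$ span at least two distinct directions (otherwise $\sum z'_i = 0$ would force all $r'_j$ to vanish, contradicting $J' \neq \emptyset$). Positivity of the perturbed lengths is then automatic for small $t > 0$. I expect the main obstacle to be case (b), where one must choose the magnitudes and signs of $\eta_a, \eta_b$ so that the perturbed angles realize precisely the cyclic order prescribed by $\sigma$, not something coarser or different.
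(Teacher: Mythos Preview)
There is a genuine gap in your ``if'' direction. The linearized closure map $(\epsilon_j)_{j\in J'}\mapsto \sum_{j\in J'}\epsilon_j e^{i\theta'_j}$ need not be surjective onto $\C$: having ``at least two distinct directions'' is not the same as spanning $\C$ over $\R$, since two \emph{opposite} directions are distinct yet collinear. This is not a corner case. Whenever the induced partition $\sigma'_{J'}$ has exactly two parts --- in particular whenever $|J'|=2$ --- the closure equation forces all nonzero edge vectors onto a single real line, and your length-only correction fails. Concretely, take $n=2$, $\sigma'=\sigma=\langle 0,1,2\rangle$, $J'=\{0,1\}$, $J=\{0,1,2\}$, and the point $p'$ with $\theta_0=0$, $\theta_1=\pi$, $\theta_2=3\pi/2$, $r_0=r_1=1$, $r_2=0$. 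Setting $r_2=t$ and correcting only $r_0,r_1$ yields $\epsilon_0-\epsilon_1-it=0$, which has no real solution with $t>0$. Case~(b) has the same vulnerability when the length-only map is used for the correction step.

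The fix is to allow angle perturbations in the correction as well, still within the open constraints imposed by $\sigma$. Once you include these, the differential of the closure map also contains the vectors $i\,r'_j e^{i\theta'_j}$ for $j\in J'$; since some $r'_j>0$ this already supplies a direction transverse to the real span of the $e^{i\theta'_j}$, and the combined map surjects onto $\C$. This is precisely what the paper's one-line argument (``bending'' together with ``introducing small lengths'') does: it perturbs angles and lengths simultaneously rather than freezing the angles in case~(a). In the example above, perturbing $\theta_1$ to $\pi-s$ for small $s>0$ produces a genuine triangle with all $r_i>0$ converging to $p'$ as $s\to 0$.
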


Next we argue that the $(\sigma,J)$-stratification defines a CW structure on $\bar P^{n-1}$.

\begin{lemma}\label{lemma:interior_ball}
$\Phi_{\sigma, J}$ is homeomorphic to $\R^{\rk(\sigma,J)}$ if $(\sigma,J) \in \mathcal W$, and it is empty if $(\sigma,J) \not\in \mathcal W$.
\end{lemma}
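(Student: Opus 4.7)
My plan is to handle the two cases of the lemma separately, using the polygonal description of points of $\bar P^{n-1}$.

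For the emptiness case: if $(\sigma, J)\notin \W$, then either $|J|\le 1$, in which case the normalization $\sum y_i = 1$ forces a single nonzero $y_i$ and $\sum_i y_i e^{i\theta_i}$ reduces to a nonzero single phase, or $J$ lies inside a single block $I_s$ of $\sigma$, so every nonzero term in $\sum_i y_i e^{i\theta_i}$ carries the common phase $e^{i\phi_s}$ and the sum equals $w_s e^{i\phi_s}\ne 0$ (where $w_s := \sum_{i\in J\cap I_s} y_i$). Either outcome contradicts the closure equation that holds throughout $\bar P^{n-1}$, so $\Phi_{\sigma, J}$ is empty.

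For $(\sigma, J)\in\W$, set $S = \{s : I_s \cap J\neq \emptyset\}$ and $m = |S| \ge 2$. I would parameterize a point of $\Phi_{\sigma, J}$ by three independent packages of data extracted from its polygon: (a) the ``coarse'' convex $m$-gon with positive edges of direction $\phi_s$ and length $w_s$ for $s\in S$, taken modulo direct similarity, i.e., a point in the moduli space $\M_m$ of labeled convex $m$-gons; (b) for each $s\in S$, a partition of $w_s$ into positive parts $\{y_i\}_{i\in I_s\cap J}$, ranging over an open $(|I_s\cap J|-1)$-simplex; (c) for each $s\notin S$, the direction-only datum $\phi_s$ placed in the open arc between the adjacent real directions, with cyclic order within each arc prescribed by $\sigma$. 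The reconstruction is manifestly a homeomorphism. The combined contribution of (b) and (c) is an open cell of dimension $(|J|-m)+(k-m)$, so it remains to show that $\M_m \cong \R^{2m-4}$; the total dimension then matches $\rk(\sigma, J) = k+|J|-4$.

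To handle $\M_m$: the case $m = 2$ is a single point. For $m\ge 3$, I parameterize a polygon by its exterior angles $\beta_s \in (0,\pi)$ with $\sum \beta_s = 2\pi$ (an open $(m-1)$-simplex, hence contractible) together with lengths $L_s > 0$, $\sum L_s = 1$, subject to the closure $\sum L_s e^{i\tau_s(\beta)} = 0$. The inequality $\beta_s < \pi$ ensures every gap between consecutive directions $e^{i\tau_s}$ is less than $\pi$, so no closed half-circle contains all of them and $0$ lies strictly in the interior of their convex hull; consequently, for every choice of $\beta$ the set of admissible lengths is a nonempty convex open subset of an $(m-3)$-dimensional affine slice. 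This displays $\M_m$ as a locally trivial bundle with convex cell fibers over a contractible base, which is necessarily trivial, giving $\M_m \cong \R^{m-1} \times \R^{m-3} = \R^{2m-4}$.

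The main obstacle is exactly this cellularity of $\M_m$: the half-circle inequality is what prevents fiber degeneration, and the contractibility of the base is what promotes local trivializations to a global product. Once this is in hand, the product decomposition above immediately yields $\Phi_{\sigma, J}\cong \R^{\rk(\sigma,J)}$.
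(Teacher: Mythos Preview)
Your argument is correct and reaches the same conclusion, but the core step is organized differently from the paper's proof, and there is one small misstatement to fix.

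The emptiness case and the reduction of a general stratum to the ``coarse'' $m$-gon via your packages (a), (b), (c) coincide with what the paper does: it too collapses each group of parallel edges to a single edge, treats the edge-splittings and the phantom directions as separate cell factors of dimensions $|J|-m$ and $k-m$, and reduces to a lower-dimensional maximal case.

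Where you diverge is in identifying $\M_m\cong\R^{2m-4}$. The paper fixes $z_{i_0}=1$ and builds the polygon inductively: it projects $\Phi_\sigma$ to the first $r$ coordinates and shows that each step $\Phi_\sigma^{(r+1)}\to\Phi_\sigma^{(r)}$ has for fiber an open planar region cut out by three linear inequalities, concluding by induction on $r$. Your route instead separates exterior angles from edge lengths, with the angle data forming a convex base and the lengths a convex fiber. Both arguments ultimately lean on the same (tacitly used) principle that a surjection with nonempty open convex fibers over a contractible base has total space homeomorphic to the product; neither proof spells this out. Your version is arguably more global and symmetric, while the paper's inductive picture makes the ``three linear inequalities'' governing each new edge visible.

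One correction: for $m\ge 4$ the set $\{\beta:\beta_s\in(0,\pi),\ \sum\beta_s=2\pi\}$ is \emph{not} an open simplex but an open hypersimplex (it has $2m$ facets). This does no harm, since you only use that it is convex and hence contractible, but ``simplex'' should read ``convex polytope''. You might also make explicit that the phantom-direction data (c) is rendered independent of (a) by parameterizing each arc linearly by $(0,1)$; otherwise ``the reconstruction is manifestly a homeomorphism'' onto a \emph{product} needs a word of justification.
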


\begin{proof}
If $(\sigma,J) \not\in \mathcal W$ then the set of non-zero edges $J$ falls in a single subset $I_s$ of $\sigma=\<I_1,\dots, I_k\>$. But it is impossible to build a closed circuit with just one non-zero side.

Now let $(\sigma,J) \in \mathcal W$ be a maximal strata, that is $\sigma=\<i_0, \dots, i_n\>$ and $J=\hat n$ (remember we drop the subscript $J$ from $\Phi_{\sigma, J}$ in this case).
We set $z_{i_0}=1$. That fixes the rotational and scaling ambiguity and we can think of $\Phi_{\sigma}$ as a subset in $(\C^*)^n$.

Denote by $\Phi_{\sigma}^{(r)}\subset (\C^*)^r$ the image of $\Phi_{\sigma}$ under the projection onto the first $r$ coordinates $z_{i_1},\dots, z_{i_r}$. 
Notice that $\Phi_{\sigma}^{(1)}$ is the upper-half plane. 
For $0< r<n-1$ the fiber of the projection $\Phi_{\sigma}^{(r+1)} \to \Phi_{\sigma}^{(r)}$ over a point $(z_{i_1},\dots, z_{i_r})$ 
is an open polyhedral domain in the plane defined by 3 linear inequalities (the red region in Fig. \ref{fig:fiber}). \begin{figure}[h]
\centering
\includegraphics[height=35mm]{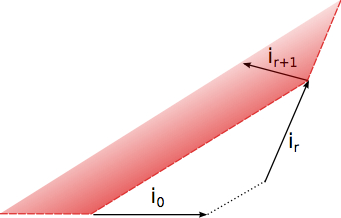}
\caption{Linear inequalities for $z_{i_{r+1}}$ defining the fiber.}
\label{fig:fiber}
\end{figure}
Finally for $r=n-1$ the fiber is a point: the last vector $z_{i_n}$ has to close the circuit. By induction this shows that the $\Phi_{\sigma}$ is homeomorphic to $\R^{2n-2}$.

For general $\sigma$ and $J$ one can first replace the vectors in each part $J_s$ of the induced cyclic partition $\sigma_J=\<J_1, \dots, J_l\>$ by their sum, thus reducing the number of edges to $l$. This projects $\Phi_{\sigma,J}$ to a lower dimensional maximal case, which is $\R^{2l-4}$ by the previous argument.
A fiber of this projection consists of possible splittings of the edge vectors into several parallel non-zero vectors from the same $J_s$, which gives $\R^{|J|-l}$, plus choosing the arguments $\theta_{I_s}$ for 
the subsets $I_s$ with $I_s\cap J=\emptyset$, according to their order in $\sigma$, which gives another $\R^{k-l}$. Putting it all together we conclude that the total space is $\R^{k+|J|-4}$.
\end{proof}

\begin{lemma}\label{lemma:manifold}
Each closed stratum $\bar\Phi_{\sigma, J}$ is a topological manifold with boundary. 
\end{lemma}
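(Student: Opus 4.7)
The plan is to exhibit $\bar\Phi_{\sigma,J}$ as the transverse zero locus of a smooth $\C$-valued map on a product of two simplices, thereby giving it the structure of a manifold with corners, and hence of a topological manifold with boundary. The parametrization used in the proof of Lemma~\ref{lemma:interior_ball} for the open stratum extends globally over the closure.

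Let $\sigma = \<I_1,\dots,I_k\>$. A point of $\bar\Phi_{\sigma,J}$ is encoded by a ``gap angle'' vector $\vec\alpha = (\alpha_1,\dots,\alpha_k)$ with $\alpha_s\ge 0$, $\sum\alpha_s = 2\pi$, and by an ``edge length'' vector $\vec y = (y_i)_{i\in J}$ with $y_i\ge 0$, $\sum_{i\in J} y_i = 1$. Setting $\theta_s := \alpha_1+\cdots+\alpha_{s-1}$, the polygon-closure condition reads
\[
F(\vec\alpha,\vec y) \;:=\; \sum_{i\in J} y_i\, e^{\,i\theta_{s(i)}} \;=\; 0,
\]
a smooth map on the product polytope $P := \Delta^{k-1}_{\vec\alpha} \times \Delta^{|J|-1}_{\vec y}$. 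A direct check shows that $(\vec\alpha,\vec y)\mapsto (\vec y,(\theta_{s(i)})_{i})$ identifies $F^{-1}(0)\cap P$ homeomorphically with $\bar\Phi_{\sigma,J}\subset \Delta\times\TT^n$ and carries the face stratification of $P$ to the $(\sigma',J')$-stratification of $\bar\Phi_{\sigma,J}$.

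The decisive step is to verify transversality: at every point of $F^{-1}(0)$ lying in the relative interior of a face of $P$, the differential $dF$ restricted to the face's tangent space is $\R$-surjective onto $\C$. By Lemma~\ref{lemma:interior_ball}, such a face meets $F^{-1}(0)$ only when its combinatorial label $(\sigma',J')$ satisfies $\sigma'\mid J'$, and then there are two parts of $\sigma'$ meeting $J'$ with distinct angles $\theta'_a\neq \theta'_b$. A length perturbation $\Delta y_i=-\Delta y_j$ for $i\in I'_a\cap J'$, $j\in I'_b\cap J'$ produces $dF = (e^{i\theta'_a}-e^{i\theta'_b})\Delta y_i$, a nonzero direction. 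A perturbation of a gap angle between the two parts contributes $dF$ proportional to $i$ times a partial sum of edge vectors which, at a zero of $F$, is proportional to $ie^{i\theta'_a}$ and hence $\R$-linearly independent from the first; together these two perturbations span $\C$.

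With transversality in hand, the implicit function theorem for maps on manifolds with corners gives $F^{-1}(0)\cap P$, and hence $\bar\Phi_{\sigma,J}$, a manifold-with-corners structure whose face stratification is precisely $\{\Phi_{\sigma',J'}\}_{(\sigma',J')\preceq(\sigma,J)}$. Since every local corner model $\R^{n-k}\times [0,\infty)^k$ is homeomorphic to a half-space, $\bar\Phi_{\sigma,J}$ is a topological manifold with boundary. The main obstacle will be carrying out the transversality check uniformly on all faces of $P$ -- in particular the deepest ones, where only a pair of lengths and a pair of gap angles remain free; the combinatorial condition $\sigma'\mid J'$ characterising non-empty faces of $F^{-1}(0)\cap P$ is exactly what leaves enough freedom in $(\vec\alpha,\vec y)$ for $dF$ to fill out $\C$.
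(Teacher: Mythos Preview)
Your approach is correct and genuinely different from the paper's. The paper builds explicit local charts: near a point $\mathcal D\in\Phi_{\sigma',J'}\subset\bar\Phi_{\sigma,J}$ it picks a coarsening $2$-partition $\langle I_-,I_+\rangle$ of $\sigma'$, cuts the polygon $\mathcal D$ into two halves, and uses relative edge lengths $|z_j|/|z_{j_\pm}|$ together with exterior angles as coordinates, mapping a neighborhood onto the corner model $\R_{\ge0}^{|J|-|J'|}\times\R^{|J'|-2}\times\R_{\ge0}^{k-k'}\times\R^{k'-2}$; the work lies in showing that after independently deforming the two halves one can rescale and re-glue them into a closed polygon. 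Your argument is global: one smooth map $F$ on the product of simplices, with transversality verified face by face, then an appeal to the implicit function theorem for manifolds with corners. This is cleaner and yields the manifold-with-corners structure in one stroke, while the paper's explicit charts make the corner codimensions visible directly. Two small points you should make precise: (i) the map $\vec\alpha\mapsto[\vec\theta]\in\TT^n$ collapses all $k$ vertices of $\Delta^{k-1}_{\vec\alpha}$ to the origin, so note that $F=\sum y_i\neq 0$ there, ensuring the identification $F^{-1}(0)\cong\bar\Phi_{\sigma,J}$ is actually bijective; (ii) ``a gap angle between the two parts'' is ambiguous --- what works is to perturb the \emph{two} nonzero gap angles bracketing $I'_a$ with opposite signs, giving $dF\propto ie^{i\theta'_a}\sum_{i\in I'_a\cap J'}y_i$ directly (for $k'=2$ these are the only two nonzero gaps, and the resulting $ie^{i\theta'_b}$ equals $\pm ie^{i\theta'_a}$ since $F=0$ forces $\theta'_a-\theta'_b=\pi$).
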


\begin{proof}
Let $\mathcal D$ be a $k$-gon which represents some point in a stratum $\Phi_{\sigma',J'}$ in $\bar{\Phi}_{\sigma,J}$. Here $\sigma'=\<I'_1,\dots, I'_{k'}\>$ is a coarsening of  $\sigma=\<I_1,\dots, I_k\>$ and $J'\subseteq J$.
We describe a coordinate system in a neighborhood of $\mathcal D$ in $\bar{\Phi}_{\sigma,J}$ which maps it to a neighborhood of a corner point in 
\begin{equation}\label{eq:coords}
\R_{\ge 0}^{|J|-|J'|} \times \R^{|J'|-2} \times \R_{\ge 0}^{k-k'} \times \R ^{k'-2}.
\end{equation}

We choose $\<I_-,I_+\>$, a cyclic 2-partition coarsening of $\sigma'$, and two elements $j_\pm \in J'_\pm:= I_\pm\cap J' $ (remember, $\sigma'$ divides $J'$). Set $J_\pm := I_\pm \cap J$. Let $\mathcal V_\pm$ be the sets vertices of $\mathcal D$ which separate subsets of $\sigma'$ in $I_\pm$, respectively. Together there are $k'-2$ vertices in $\mathcal V_-$ and $\mathcal V_+$. Let $\mathcal V'$ be the set of vertices of $\mathcal D$ which separate subsets of $\sigma$ inside the subsets of $\sigma'$. There are $k-k'$ vertices in $\mathcal V'$.

The first $|J|-2$ coordinates are given by the lengths of edges in $J_-, J_+$ relative the lengths of ${j_-}, {j_+}$, respectively. Namely we set 
$x_j := |z_j|/|z_{j_\pm}|, \ j\in J_\pm\setminus j_\pm.
$
Note that $x_j=0, j\in J\setminus J'$, at $\mathcal D$ and they can deform only positively. The coordinates $x_j$ give the first two factors in \eqref{eq:coords}. The last two factors in \eqref{eq:coords} are formed by the exterior angles $\alpha_r$ at the vertices $\mathcal V_\pm$ and $\mathcal V'$ of $\mathcal D$. The angles at $\mathcal V'$ are zero at $\mathcal D$ and can only deform positively to maintain convexity of nearby polygons in $\bar{\Phi}_{\sigma,J}$.

\begin{figure}[h]
\centering
\includegraphics[height=30mm]{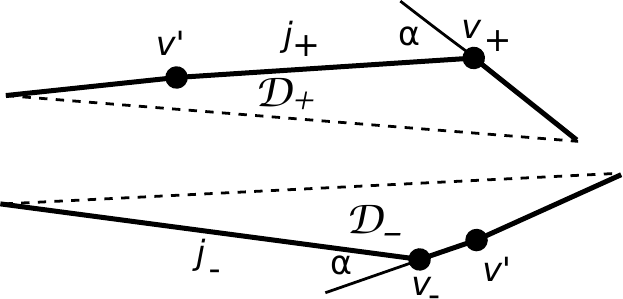}
\caption{Gluing polygon from its two deformed halves.}
\label{fig:coordinates}
\end{figure}

Any small variation of $x_i$'s and $\alpha_r$'s from the original values at $\mathcal D$ will independently deform the two halves $\mathcal D_\pm$, which correspond to $I_\pm$ (see Fig. \ref{fig:coordinates}). 
Then one uniquely reconstructs a polygon $\mathcal D'$ by rescaling (the values of the $x_i$'s and $\alpha_r$'s are not changed) and gluing the deformed halves~$\mathcal D'_\pm$ at the ends. 
\end{proof}

\begin{remark}
The above argument shows that $\bar\Phi_{\sigma, J}$ is, in fact, a manifold with corners.
\end{remark}

\begin{proposition}\label{prop:the_ball}
Each closed stratum $\bar \Phi_{\sigma,J}$ is homeomorphic to a closed ball.
\end{proposition}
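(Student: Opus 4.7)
The plan is as follows. Set $d := \rk(\sigma, J)$. Lemma~\ref{lemma:manifold} (with its remark) gives that $\bar\Phi_{\sigma,J}$ is a compact topological $d$-manifold with boundary, and Lemma~\ref{lemma:interior_ball} says its interior is homeomorphic to $\R^d$. By the topological collar neighborhood theorem, $\bar\Phi_{\sigma,J}$ deformation retracts onto its interior and is therefore contractible.

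Next, I would identify the boundary as a sphere. Because $\bar\Phi_{\sigma,J}$ is compact Hausdorff with $\partial\bar\Phi_{\sigma,J}$ closed, the quotient $\bar\Phi_{\sigma,J}/\partial\bar\Phi_{\sigma,J}$ is the one-point compactification of the interior, namely $(\R^d)^+ \cong S^d$. The Puppe cofiber sequence
$$\partial\bar\Phi_{\sigma,J}\ \hookrightarrow\ \bar\Phi_{\sigma,J}\ \longrightarrow\ S^d$$
combined with the contractibility of $\bar\Phi_{\sigma,J}$ forces $\Sigma\,\partial\bar\Phi_{\sigma,J} \simeq S^d$, so the closed $(d-1)$-manifold $\partial\bar\Phi_{\sigma,J}$ has the homotopy type of $S^{d-1}$. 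The topological Poincar\'e conjecture (Newman for $d-1\ge 5$, Freedman for $d-1=4$, Perelman for $d-1=3$, classical surface theory for $d-1\le 2$) then identifies $\partial\bar\Phi_{\sigma,J}\cong S^{d-1}$.

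Finally, I would upgrade ``contractible compact $d$-manifold with $S^{d-1}$ boundary'' to ``$d$-ball.'' The route suggested by the cobordism argument credited in the acknowledgments is to excise a small open coordinate ball $B$ from the interior: the complement $W=\bar\Phi_{\sigma,J}\setminus B$ is an h-cobordism from $\partial B\cong S^{d-1}$ to $\partial\bar\Phi_{\sigma,J}\cong S^{d-1}$, since both inclusions are homotopy equivalences by Mayer--Vietoris and van Kampen (using contractibility of $\bar\Phi_{\sigma,J}$ and of $B$). The topological h-cobordism theorem in the simply connected setting then yields $W\cong S^{d-1}\times[0,1]$ and hence $\bar\Phi_{\sigma,J}\cong D^d$ in the range of dimensions where it applies; the low-dimensional cases are handled separately via Freedman, Perelman, and classical surface theory. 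The single hardest step is the appeal to the topological Poincar\'e conjecture to promote the homotopy equivalence $\partial\bar\Phi_{\sigma,J}\simeq S^{d-1}$ to a homeomorphism. A more self-contained alternative, presumably in the spirit of the polyhedral ideas credited to Haase, would be an inductive combinatorial argument on $d$, assembling $\partial\bar\Phi_{\sigma,J}$ out of the lower-rank closed strata $\bar\Phi_{\sigma',J'}$ --- which are balls by induction --- according to the explicit face poset $\mathcal W_{\preceq(\sigma,J)}$, thereby exhibiting the boundary as a shellable PL sphere without invoking deep topology.
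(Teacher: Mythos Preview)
Your main line --- excise an open ball from the interior and apply the h-cobordism theorem to the resulting cobordism between $S^{d-1}$ and $\partial\bar\Phi_{\sigma,J}$ --- is exactly the paper's argument in dimensions $d>4$. However, the Puppe/Poincar\'e detour you insert beforehand is both redundant and, as written, contains a gap: from $\Sigma\,\partial\bar\Phi_{\sigma,J}\simeq S^d$ you only deduce that $\partial\bar\Phi_{\sigma,J}$ is a \emph{homology} $(d-1)$-sphere, not a homotopy sphere (the suspension of the Poincar\'e homology $3$-sphere is homotopy equivalent to $S^4$, for instance), so the Poincar\'e conjecture does not yet apply. You still need $\pi_1(\partial\bar\Phi_{\sigma,J})=0$, which the paper reads off directly from the interior being $\R^d$: the collar identifies $\partial\bar\Phi_{\sigma,J}$ with a neighborhood of infinity in $\R^d$, and any such neighborhood is sandwiched between complements of large round balls, which are simply connected for $d\ge 3$. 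Once you have simple connectivity, the h-cobordism argument already delivers $\partial\bar\Phi_{\sigma,J}\cong S^{d-1}$ as a byproduct, so the Poincar\'e step can be dropped entirely.

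The genuine divergence is in low dimensions. You appeal to Freedman and Perelman; the paper instead handles $\rk(\sigma,J)\le 4$ by explicitly identifying $\bar\Phi_{\sigma,J}$ with a simple polytope (the hexagon for $n=2$, the dual of Gr\"unbaum--Sreedharan's $P^8_{35}$ for $n=3$; see the remark after Conjecture~\ref{conj:grunbaum}). Your route is valid but imports very heavy machinery for what are, in these small cases, concrete polygon spaces; the paper's route is elementary in principle but requires the case-by-case polytope identification.
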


\begin{proof}
Lemmas \ref{lemma:interior_ball} and \ref{lemma:manifold} show that $\bar \Phi_{\sigma,J}$ is a compact topological manifold with boundary whose interior is homeomorphic to the Euclidean space.  
In particular its boundary $\partial \bar \Phi_{\sigma,J}$ is simply connected (unless it is of dimension one). We can remove a topological ball from the interior and use a collar of the boundary to get an $h$-cobordism between the boundary and the standard sphere which has to be trivial, at least in dimension $> 4$. Gluing the ball back in we conclude that $\bar \Phi_{\sigma,J}$ has to be a closed ball. In dimensions $\le 4$ one can give an explicit homeomorphism of  $\bar \Phi_{\sigma,J}$ with a simple polytope (cf. remark after Conjecture~\ref{conj:grunbaum}).
\end{proof}

Combining Propositions \ref{prop:boundary} and \ref{prop:the_ball} we arrive at the desired CW decomposition of the pair-of-pants (cf., e.g., \cite{LW} for details about regular CW complexes). 

\begin{proposition}\label{prop:complexCW}
 $\bar P^{n-1}= \bigcup_{(\sigma, J) \in \mathcal W} \Phi_{\sigma, J}$ is a regular CW-complex.
\end{proposition}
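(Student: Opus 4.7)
My plan is to verify the three regular CW-complex axioms directly from the preceding results; essentially all the substantive content has been established already. Recall that a CW-complex is \emph{regular} if each closed cell is the image of a closed ball under a characteristic map whose restriction to the bounding sphere is a homeomorphism onto the topological boundary of the cell, and that boundary is itself a subcomplex.

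First I would use that the strata $\{\Phi_{\sigma,J}\}$ pairwise disjointly cover $\bar P^{n-1}$ by construction. By Lemma \ref{lemma:interior_ball} the non-empty strata are precisely those indexed by $\mathcal W$, and each $\Phi_{\sigma,J}$ is homeomorphic to $\R^{\rk(\sigma,J)}$, giving the required open cells of the correct dimensions.

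Second I would extract characteristic maps from Proposition \ref{prop:the_ball}, which provides a homeomorphism $\phi_{\sigma,J}\colon D^{\rk(\sigma,J)}\to\bar\Phi_{\sigma,J}$ for each $(\sigma,J)\in\mathcal W$. Lemma \ref{lemma:manifold} says $\bar\Phi_{\sigma,J}$ is a manifold with boundary, so by invariance of domain $\phi_{\sigma,J}$ must send the open disk onto the manifold interior and the bounding sphere onto the manifold boundary. The explicit local coordinate model in the proof of Lemma \ref{lemma:manifold} shows that a point lying in a stratum $\Phi_{\sigma',J'}$ with $(\sigma',J')\preceq(\sigma,J)$ and $(\sigma',J')\neq(\sigma,J)$ always has at least one $\R_{\geq 0}$-coordinate vanishing, so it sits in the manifold boundary; consequently the manifold interior coincides exactly with $\Phi_{\sigma,J}$. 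Thus $\phi_{\sigma,J}$ already satisfies the regularity requirement without further adjustment.

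Finally, Proposition \ref{prop:boundary} identifies $\bar\Phi_{\sigma,J}\setminus\Phi_{\sigma,J}$ with the union of those $\Phi_{\sigma',J'}$ where $(\sigma',J')\preceq(\sigma,J)$ and $(\sigma',J')\neq(\sigma,J)$; each such has strictly smaller rank, so the bounding sphere maps onto a finite union of strictly lower-dimensional cells, which is a subcomplex. Closure-finiteness follows, and the weak topology axiom is automatic because $\bar P^{n-1}$ is compact and the decomposition is finite. No genuine obstacle remains at this assembly stage — the real work was concentrated in Proposition \ref{prop:the_ball}, and the only mildly subtle bookkeeping point is matching the manifold interior with the open stratum, which the local coordinates of Lemma \ref{lemma:manifold} handle directly.
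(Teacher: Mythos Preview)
Your proposal is correct and follows exactly the paper's approach: the paper's own proof is a one-liner citing Propositions~\ref{prop:boundary} and~\ref{prop:the_ball} (with a reference to \cite{LW} for generalities on regular CW-complexes), and you have simply unpacked the verification of the CW axioms that the paper leaves implicit. Your additional observation---using the local coordinates of Lemma~\ref{lemma:manifold} to match the manifold interior of $\bar\Phi_{\sigma,J}$ with the open stratum $\Phi_{\sigma,J}$---is a reasonable way to make that identification explicit.
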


\subsection{The coamoeba and its decompositions}
\label{sec:coamoeba}
Consider the argument map 
$$\Arg: (\C^*)^{n+1}/\C^* \to \TT^n, \qquad  [z_0 , \dots, z_n] \mapsto [\arg(z_0), \dots , \arg(z_n)].
$$
The closure of the image $\Arg (P^{n-1})$ in $\TT^n$ is the {\bf coamoeba} $\cc^n$ of the pair-of-pants. 
The argument map extends to a continuous surjective map $\Arg: \bar P^{n-1} \to \cc^n$ via the projection from $\Delta \times \TT^n$ onto the second factor. 

We can think of points in $\cc^n$ as allowed configurations of $n+1$ marked points $\theta_0, \dots, \theta_n$ on the circle. A configuration is {\bf allowed} if not all points lie on an open half-circle. Any allowed configuration is realized by a point in $\bar P^{n-1}$: we circumscribe a polygon $\mathcal D$ around the circle with edges tangent at the $\theta_i$'s.
Excluding {\em non}-allowed configurations leads to a well-known  description of $\cc^n$ as the complement of the interior of the zonotope  
(cf., e.g., \cite[Prop 2.1]{Sheridan})
$$Z=  \sum_{i=0}^n [0, \pi_{i}],
$$ 
where an interval $[0, \pi_{i}] \subset \TT^n$ is defined by $\theta_0{=} \dots \hat\theta_i \dots  {=} \theta_n$, $\theta_i-\theta_0 \in [0,\pi]$. The facets of $Z$ are given by hyperplanes $\theta_i-\theta_j=\pi$.
Among all boundary points of $\cc^n$ only the vertices $\pi_I$, for $I\ne \emptyset$ or $\hat n$, are in the image $\Arg (P^{n-1})$.

For any subset  $J\subseteq \hat n$ we define the partial coamoeba $\cc_J$ to be the closure of $\Arg (P^{n-1}_J)$ in $\TT^n$, where $P^{n-1}_J \subset  (\C^*)^{n+1}/\C^*$ is the hypersurface given by 
$$\sum_{j\in J} z_j =0.
$$  
An allowed configuration of points on the circle remains allowed if we add more points to it. This shows that $\cc_I\subseteq \cc_J$ for $I\subseteq J$. 
In particular, all $\cc_J$ are closed subsets of $\cc^n$. Note that $\cc_J$ is empty unless $|J|\ge 2$.

We call a subset in $\TT^n$ a {\bf polytope} if it is a bijective image of a convex polytope in the universal cover $\R^{n+1}/\R$. We will often define a polytope by a set of inequalities in $\R^{n+1}/\R$ which depends on a cyclic partition $\sigma$ along with a choice of an initial subset in~$\sigma$. However the image polytope in $\cc^n$ will be independent of that choice. 
We give two polytopal  decompositions of $\cc^n$. The second is a refinement of the first. 

The {\bf octahedral} decomposition (the name comes from the case $n=3$, see Fig. \ref {fig:coamoeba}) is the restriction to $\cc^n$ of the stratification $\T^n_\sigma$ of $\T^n$ by the cyclic partitions of $\hat n$. For $\sigma= \<I_1, \dots, I_k\>$ the octahedron $\O_{\sigma}:= \cc^n \cap \bar \T^n_\sigma$ is given  (in $\R^{n+1}/\R$) by 
\begin{equation}\label{eq:octa}
\begin{split}
\theta_i=\theta_{i'} =: \theta_{I_s}, & \text{ for } i,i' \in I_s,\\
\theta_{I_s} \le \theta_{I_{s+1}} \le \theta_{I_s} + \pi, & \  s=1, \dots, k-1,
\\
\theta_{I_k} \le \theta_{I_{1}} + 2\pi  \le \theta_{I_k} + \pi &. 
\end{split}
\end{equation}
Left inequalities reflect the order of $\sigma$. Right inequalities define the boundary of $\cc^n$: they exclude non-allowed configurations.
Changing the initial subset from $I_1$ to $I_r$ in $\sigma$ would amount to shifting $\theta_1, \dots, \theta_{r-1}$ by $2\pi$.
Note that two distinct lifts to $\R^{n+1}/\R$ of a point in $\T^n$ cannot both satisfy \eqref{eq:octa}.
In particular, it means that $\O_{\sigma}$ is a polytope in $\TT^n$. 

The full-dimensional octahedra correspond to maximal cyclic partitions. In general, the dimension of $\O_\sigma$ is $k-1$, where $k$ is the number of sets in $\sigma$. The vertices are exceptions from this rule, they correspond to cyclic 2-partitions. And there are no 1-dimensional octahedra. 

\begin{remark}
The octahedral decomposition is not a polyhedral complex. The faces of octahedra on the boundary of $\cc^n$, except vertices, are not octahedra themselves. In particular, the edges always lie on the boundary. 
\end{remark}

\begin{example}
For $n=2$ there are 2 maximal octahedra (= triangles in this case). For $n=3$ there are 6 maximal octahedra. In Fig. \ref {fig:coamoeba} (we set $\theta_0=0$ and $0\le \theta_{i} \le 2\pi$) the red octahedron is $\O_{0213}$, the blue one is $\O_{0321}$ (we dropped commas and brackets from the subscripts). The triangle face common to the red and green octahedra is $\O_{02\{13\}}$. For $n\ge3 $ a maximal octahedron has $n+1$ pairs of facets (corresponding to $n+1$ pairs of inequalities \eqref{eq:octa}). The $n+1$ facets in the interior of $\cc^n$ are the $(n-1)$-maximal octahedra. Opposite to each such octahedron lies an $(n-1)$-simplex, which is on the boundary of $\cc^n$. 
\end{example}

\begin{figure}[h]
\centering
\includegraphics[height=50mm]{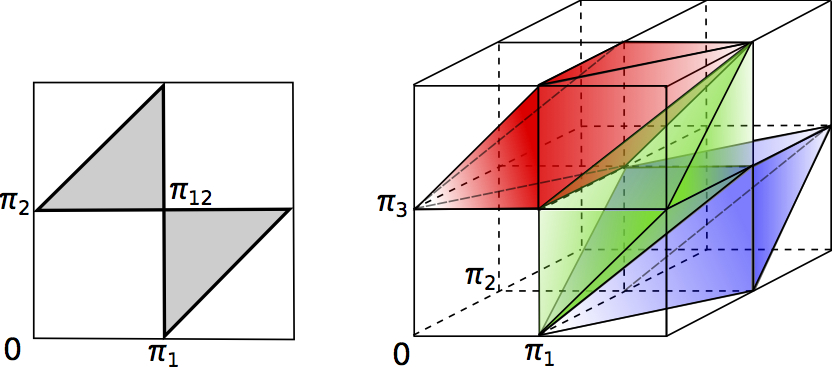}
\caption{Octahedral subdivisions: the two triangles for $n=2$, and three of the six octahedra for $n=3$.}
\label{fig:coamoeba}
\end{figure}

Let us look at the induced decompositions of partial coamoebas $\cc_J$. Let $\sigma=\<I_1, \dots, I_k\>$ and $\sigma_J=\<J_1, \dots, J_r\>$.
The intersection $\O_{\sigma,J}:= \cc_J\cap \O_\sigma$ is not an octahedron anymore, though it is still a polytope in $\TT^n$. Namely, it is cut out by the inequalities (in $\R^{n+1}/\R$): 
\begin{equation}\label{eq:octa_partial}
\begin{split}
\theta_i=\theta_{i'}  =:\theta_{I_s}, &  \text{ for } i,i' \in I_s,\\
\theta_{I_1}\le \dots \le \theta_{I_k}  \le \theta_{I_1}+2\pi, & \\ 
\theta_{j_{s+1}} \le \theta_{j_s} + \pi \text{ and } \theta_{j_1}+\pi  \le  \theta_{j_r}, &
\text{ where } j_s\in J_s.
\end{split}
\end{equation}
Recall that we drop the empty sets $I_s\cap J$ from $\sigma_J$ and shift the indexing. Thus the inequalities in the third line of \eqref{eq:octa_partial}, which define the boundary of the partial coamoeba, are generally stronger than the ones in \eqref{eq:octa}.

The {\bf alcove} decomposition of $\cc^n$ (the name comes from the affine root system $\hat A_n$) is the restriction of the triangulation of $\TT^n$ induced from the  decomposition of $\R^{n+1}/\R$ by  
the hyperplanes
\begin{equation}\label{eq:alcove}
\theta_i - \theta_j \in \pi \Z, \text{ for all pairs } i,j \in \hat n.
\end{equation}
The octahedra and their intersections with partial coamoebas are cut out by hyperplanes of the same form which means that all $\O_{\sigma,J}$ are triangulated by alcoves.

\begin{figure}[h]
\centering
\includegraphics[height=30mm]{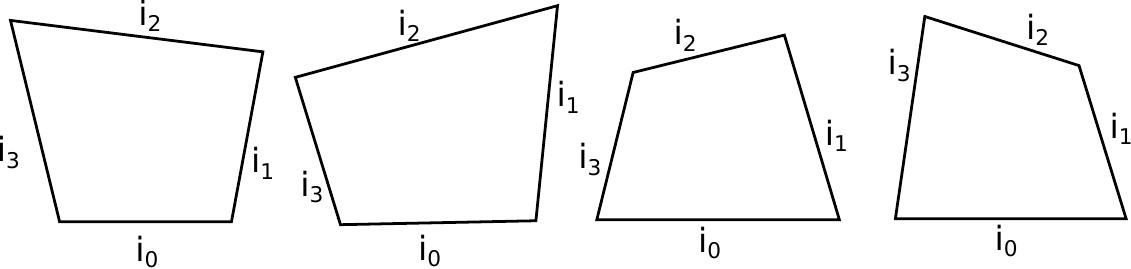}
\caption{The four shapes of generic quadrilaterals.}
\label{fig:alcove4}
\end{figure}

The hyperplanes \eqref{eq:alcove} break $\T^n$ into $n! \cdot 2^n$ maximal simplices. The $(n+1)!$ of them are incident to 0, they form the zonotope $Z$ and are not part of the coamoeba. Thus each of the $n!$ maximal octahedra in $\cc^n$ consists of $(2^n-n-1)$ maximal alcoves. 
For example, for $n=3$ each of the maximal octahedra is broken into 4 alcoves according to the relative directions of the opposite pairs of edges in the representing polygons (see Fig. \ref{fig:alcove4}).
The zero-dimensional octahedra are also the zero-dimensional alcoves, and they are the vertices of the coamoeba. 

To label alcoves we introduce 
certain combinatorial objects $\tau$ which refine cyclic partitions~$\sigma$. Think of $\sigma$ as coming from a configuration of points $\theta_0, \dots, \theta_n$ on the circle. Identifying the opposite points of the circle gives a new configuration of points on the quotient circle, that is, another cyclic partition $\tilde \sigma$. The purpose of $\tau$ is to encode both $\sigma$ and~$\tilde \sigma$.

Given a cyclic partition $\sigma = \<I_1, \dots, I_{k}\>$ we mark $k$ distinct points on the boundary of a disk (which we will call vertices) and label the $k$ boundary arcs between the vertices by the sets $I_s$ in the order given by $\sigma$. We say that a non-empty collection of chords in the disk with end points at the marked vertices is a {\bf net} $\tau$ if any two chords intersect (possibly at the end points).
If some of the vertices on the circle are not used by any of the chords in $\tau$ we can join the non-separated arcs together, thus getting a coarsening of $\sigma$ which we denote by $\sigma(\tau)$. Instead of the original~$\sigma$ we rather let the cyclic partition $\sigma(\tau)$ be a part of the intrinsic information in $\tau$.

\begin{figure}[h]
\centering
\includegraphics[height=30mm]{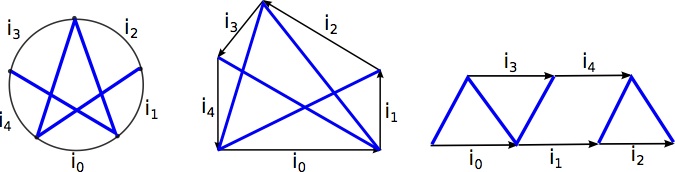}
\caption{Net of chords, diameters in $\mathcal D$ (sides $i_1$ and $i_4$ are parallel) and a subdivision of the M\"obius band.}
\label{fig:diagonals}
\end{figure}

One can think of nets as the M\"obius band decompositions as follows. We put the disk in $\R\PP^2$ and extend chords in $\tau$ to lines in $\R\PP^2$. The complement of the disc is the M\"obius band with  boundary broken into arcs also labeled by $\sigma(\tau)$. Any two chords intersect inside the disk, which means that their complements, which we call {\bf intervals}, do not intersect. That is, $\tau$ can be thought of a decomposition of the M\"obius band by intervals into triangles and trapezoids. Maximal decompositions with fixed $\sigma=\sigma(\tau)$ are triangulations (no trapezoids) and they use $k$ intervals. Minimal decompositions with fixed $\sigma=\sigma(\tau)$ consist of trapezoids (plus one triangle if $k$ is odd). Thus the number $l$ of chords in a net can be any integer between $k/2$ and $k$. 

The midcircle in the M\"obius band (the ``horizon'' in $\R\PP^2$) oriented along its boundary defines a new cyclic partition $\tilde \sigma(\tau)=\<K_1, \dots, K_l\>$ of $\hat n$, which we call the {\bf shuffle} of $\tau$.  Opposite sides of trapezoids in $\tau$ are combined into single subsets in $\tilde \sigma(\tau)$. The number of subsets in $ \tilde \sigma(\tau)$ equals number of chords in $\tau$.

\begin{example}\label{ex:chords}
The left picture in Figure \ref{fig:diagonals} is an example of a 4-chord net $\tau$ with $\sigma(\tau)=\<i_0, \dots, i_4\>$. The corresponding M\"obius band on the right is glued by identifying left and right blue intervals (turning one of the them upside down). We can always cut the M\"obius band along an interval in $\tau$ and picture the subdivision like that. 
This helps to see the shuffle order, which in this case is $\tilde\sigma(\tau) =\<i_0,  i_3,  \{i_1, i_4\}, i_2 \>$. 
\end{example}

Given a net $\tau$ we define the alcove $\am_\tau\subset \TT^n$ as the image of a simplex in $\R^{n+1}/\R$. As before we choose an initial set in $\sigma(\tau)= \<I_1, \dots, I_{k}\>$.
First, for $i,i'\in I_s$ we set 
\begin{equation}\label{eq:sameset}
\theta_i=\theta_{i'} =: \theta_{I_s}. 
\end{equation}
Next if $I_s$ and $I_r$ are opposite sides of a trapezoid in $\tau$ we set 
\begin{equation}\label{eq:opposite}
\theta_{I_r}+\pi = \theta_{I_s} \text{ if } r<s.
\end{equation}
Finally, we describe the inequalities, one for each chord (or interval) in $\tau$. Let $I_{s}\subseteq K_{s'}$ follow right after $I_{r}\subseteq K_{r'}$ in the shuffle order (that is, $s'=r'+1$ or $r'=l, \ s'=1$).
If $I_s$ also follows right after $I_r$ in the $\sigma(\tau)$-order (that is $s=r+1$ or $r=k, \ s=1$) we set:
\begin{equation}\label{eq:median}
\begin{split}
\theta_{I_r}  \le \theta_{I_{s}}, & \text{ if } 1\le r <k, \text{ or }\\
\theta_{I_r}  \le \theta_{I_s}+2\pi, & \text{ if } r=k,  s=1.
\end{split}
\end{equation}
If $I_s$ does not follow $I_r$ in the $\sigma(\tau)$-order we set:
\begin{equation}\label{eq:diagonal}
\begin{split}
\theta_{I_r} + \pi  \le \theta_{I_s}, & \text{ if } r<s, \text{ or }\\
\theta_{I_r}  \le \theta_{I_s}+\pi,  & \text{ if } r>s.
\end{split}
\end{equation}
If $K_{r'}$ or $K_{s'}$ (or both) contain more than one (i.e., two) subsets from $\sigma(\tau)$ then, given \eqref{eq:opposite}, any choice of a pair $(I_{r}, I_{s}) \subseteq (K_{r'}, K_{s'})$ gives rise to the same inequality. 

Altogether the inequalities \eqref{eq:sameset}, \eqref{eq:opposite}, \eqref{eq:median} and \eqref{eq:diagonal} define an $(l-1)$-dimensional simplex in $\R^{n+1}/\R$  which descends to a simplex in $\T^n$. This is the alcove $\am_\tau$. Its relative interior is defined by replacing \eqref{eq:median} and \eqref{eq:diagonal} with strict inequalities.
In Example \ref{ex:chords} (see Fig. \ref{fig:diagonals}) the alcove $\am_\tau$ is defined by 
$$ \theta_{i_4}= \theta_{i_1}+\pi, \quad \theta_{i_0}+\pi \le \theta_{i_3}, \quad \theta_{i_3}\le\theta_{i_4}, \quad 
\theta_{i_1} \le \theta_{i_2}, \quad \theta_{i_2} \le \theta_{i_0} +\pi.
$$

There is another,  non-minimal, but more intuitive set of inequalities which defines $\am_\tau$ directly in $\TT^n$. It keeps track of relative positions of {\em all} pairs of points $\theta_i, \theta_j$ on the circle, i.e., which half of the circle the differences $\theta_j-\theta_i$ belong to.
If $i,j$ are elements in different subsets in $\sigma(\tau)$ then any chord in $\tau$ which does {\em not} divides $\{i,j\}$ defines an order between $i$ and $j$ (going counter clockwise). All such chords in $\tau$ give the same order (otherwise, they would not intersect). We write $i\to_\tau j$ if if $i$ comes first in this order.
Two elements  $i,j\in \hat n$ may be divided by: 
\begin{enumerate}
\item no chords in $\tau$, that is $i,j$ belongs to the same $I_s$ in $\sigma(\tau)$,
\item all chords in $\tau$, that is $i,j$ lie in opposite sides of a trapezoid, or
\item some but not all chords in $\tau$, that is $i,j$ belong to different cells in the M\"obius band decomposition. 
\end{enumerate}
We set 
\begin{equation}\label{eq:circle}
\begin{split}
\theta_i=\theta_j, &\text{ in case (1)}\\
\theta_i - \theta_j = \pi \mod 2\pi, & \text{ in case (2)},\\
\theta_j -\theta_i \in [0,\pi] \mod 2\pi, &  \text{ in case (3) with }  i\to_\tau j.
\end{split}
\end{equation}

\begin{lemma}\label{lemma:alcoves}
The alcove $\am_\tau$ is defined by \eqref{eq:circle}.
\end{lemma}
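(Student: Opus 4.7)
The goal is to show that the alcove $\am_\tau$, originally defined by \eqref{eq:sameset}, \eqref{eq:opposite}, \eqref{eq:median}, \eqref{eq:diagonal}, coincides with the subset of $\T^n$ cut out by \eqref{eq:circle}. Cases (1) and (2) of \eqref{eq:circle} are exactly \eqref{eq:sameset} and \eqref{eq:opposite} (the sign of $\pi$ in \eqref{eq:opposite} being encoded by the convention $r < s$); so the task reduces to showing that, modulo these equalities, case (3) of \eqref{eq:circle} is equivalent to the system \eqref{eq:median}$+$\eqref{eq:diagonal}.

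One direction I would handle by noting that each inequality in \eqref{eq:median} or \eqref{eq:diagonal} is itself a specialization of case (3). For every pair $(I_r, I_s)$ adjacent in the shuffle order $\tilde{\sigma}(\tau)$, any $i \in I_r$, $j \in I_s$ is separated by exactly one chord of $\tau$ (the one between $K_{r'}$ and $K_{s'}$), hence lies in case (3). Unpacking $\theta_j - \theta_i \in [0, \pi] \mod 2\pi$ in $\R^{n+1}/\R$ with the lift determined by the cases $r < s$, $r > s$, or wrap-around $r = k, s = 1$, recovers exactly \eqref{eq:median} when $(I_r, I_s)$ is adjacent in $\sigma(\tau)$, and \eqref{eq:diagonal} otherwise; the cases where one of the $K$'s is a trapezoid are absorbed using \eqref{eq:opposite}.

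For the converse direction, given $(i, j)$ in case (3) with $i \in I_r \subseteq K_{r'}$ and $j \in I_s \subseteq K_{s'}$, I would telescope the shuffle-adjacent inequalities along each of the two arcs of the shuffle cycle connecting $K_{r'}$ to $K_{s'}$. Each telescoping identifies representatives inside a given $K$ via \eqref{eq:sameset} and absorbs the $\pi$-shifts across trapezoidal $K$'s via \eqref{eq:opposite}. The arc aligned with $\to_\tau$ yields the lower bound $0 \le \theta_j - \theta_i$, while the complementary arc yields $\theta_j - \theta_i \le \pi$; together they give case (3) of \eqref{eq:circle}. The main obstacle is the bookkeeping of the $\pi$- and $2\pi$-shifts around the telescoping cycle and verifying that the direction $\to_\tau$ (defined via any chord not separating $\{i, j\}$) actually aligns with the intended arc of the shuffle. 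Both reduce to the defining net property—that any two chords intersect—which propagates a coherent orientation across all non-separating chords and ties $\to_\tau$ to the shuffle direction from $K_{r'}$ to $K_{s'}$ along the short arc.
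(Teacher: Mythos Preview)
Your proposal is correct and close in spirit to the paper's argument: both directions agree on the easy inclusion (the shuffle-adjacent inequalities \eqref{eq:median}, \eqref{eq:diagonal} are instances of case~(3)), and both derive the general case~(3) inequality by chaining the shuffle-adjacent ones. The packaging of the converse differs, however. You telescope directly along the two arcs of the shuffle cycle $\tilde\sigma(\tau)$ to extract both bounds $0\le\theta_j-\theta_i\le\pi$, and flag the resulting $\pi/2\pi$ bookkeeping as the main obstacle. The paper instead handles one inequality at a time and first performs an inductive reduction: it picks a chord not separating $I_r,I_s$, cuts the M\"obius band along it, and reduces to the case where $I_r,I_s$ are neighbours in the $\sigma(\tau)$-order (i.e.\ $s=r+1$). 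Only then does it chain through the intermediate subsets in the shuffle order lying on the opposite side of the strip (cf.\ Fig.~\ref{fig:medians}), which localises the shift-tracking to a short explicit computation. Your route is more symmetric; the paper's buys a cleaner accounting of the $\pi$-shifts at the cost of the extra cut-and-induct step.
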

\begin{proof}
In the lift to $\R^{n+1}/\R$ associated with the initial subset $I_1$ in $\sigma(\tau)$ the first two equations in \eqref{eq:circle} are the same as \eqref{eq:sameset} and \eqref{eq:opposite}. The inequalities \eqref{eq:median} and \eqref{eq:diagonal} form a subset of the third line in \eqref{eq:circle} for pairs $i,j$ next to each other in the shuffle order. 
 
In the opposite direction, let us deduce, say, $\theta_{I_r} \le \theta_{I_s}$ for $I_r \to_\tau I_s$ and  $r<s$. Other inequalities in \eqref{eq:circle} are similar. Choose a chord which does not divide $I_r$ and $I_s$ and cut the M\"obius band along it. Then the M\"obius band unfolds into a strip with $I_r$ and $I_s$ on one side. By induction, we may assume that $I_r$ and $I_s$ are neighbors in the $\sigma(\tau)$-order, that is $s=r+1$, but there may be several subsets $I_{r'}, \dots, I_{r''}$ ``shuffled'' in-between in the shuffle order, see Fig. \ref{fig:medians}. 

\begin{figure}[h]
\centering
\includegraphics[height=20mm]{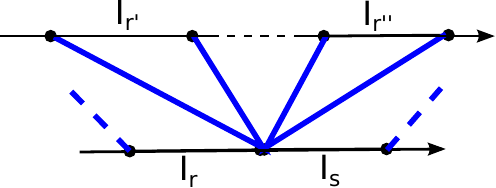}
\caption{A fragment of the M\"obius band.}
\label{fig:medians}
\end{figure}

There are several cases for the order among the subscripts $r, r', r''$. We consider, e.g., the case $r'' < r < s =r+1 < r'$, others are similar. Then \eqref{eq:median} and \eqref{eq:diagonal} will read:
\begin{equation}
\begin{split}
\pi+ \theta_{I_r}\le \theta_{I_{r'}} \le \dots \le \theta_{I_{k}} \le \theta_{I_{1}} +2\pi \le \dots \le & \theta_{I_{r''}} + 2\pi,\\
& \theta_{I_{r''}} + \pi \le \theta_{I_s},
\end{split}
\end{equation}
which imply $\theta_{I_r} \le \theta_{I_s}$.
\end{proof}

All nets, or equivalently all subdivisions of the M\"obius band, form a poset under refinements. We set $\rk \tau := l-1$, where $l$ is number of chords in $\tau$. This is the dimension of the alcove $\am_\tau$. Clearly, $\tau\preceq \tau'\Rightarrow \sigma(\tau)\preceq \sigma(\tau')$. For any $J\subseteq \hat n$ we say a chord in $\tau$ {\bf divides} $J$ if $J$ does not lie on one side of it. We say $\tau$ {\bf divides} $J$ (and write $\tau | J$) if all of its chords do. In Example \ref{ex:chords} (see Fig. \ref{fig:diagonals}) the net $\tau$ divides only $J=\{i_1, i_4\}$ among all two element subsets, but it divides any $J\subseteq \hat n$ with $|J|\ge 3$.

\begin{proposition}\label{prop:alcoves}
$\am_\tau\subseteq \O_{\sigma,J}$ if and only if $\sigma(\tau)\preceq \sigma$ and $\tau$ divides $J$. \end{proposition}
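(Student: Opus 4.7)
The plan is to apply Lemma~\ref{lemma:alcoves}, which re-expresses the alcove $\am_\tau$ via the pairwise bounds \eqref{eq:circle}, and to verify the defining inequalities \eqref{eq:octa_partial} of $\O_{\sigma,J}$ directly in both directions.

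For the direction ``$\Leftarrow$'' assume $\sigma(\tau)\preceq\sigma$ and $\tau\mid J$, and take $\theta\in\am_\tau$ with a lift to $\R^{n+1}/\R$ compatible with both initial subsets. The first line of \eqref{eq:octa_partial}, $\theta_i=\theta_{i'}$ for $i,i'\in I_s$, follows from case (1) of \eqref{eq:circle}, since $\sigma\succeq\sigma(\tau)$ places $I_s$ inside a single $\sigma(\tau)$-part. The second line, the cyclic-order chain $\theta_{I_1}\le\cdots\le\theta_{I_k}\le\theta_{I_1}+2\pi$, follows because \eqref{eq:median} and \eqref{eq:diagonal} enforce the $\sigma(\tau)$-cyclic order on $\sigma(\tau)$-parts, which descends to $\sigma$'s parts. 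The consecutive bounds $\theta_{j_{s+1}}\le\theta_{j_s}+\pi$ in the third line are automatic because in the chosen lift every pairwise difference is at most $\pi$ by \eqref{eq:circle}. The essential step is the total-span inequality $\theta_{j_1}+\pi\le\theta_{j_r}$. I would first reduce to $\sigma=\sigma(\tau)$ (harmless because refining $\sigma$ within $\sigma(\tau)$-parts does not change the $J$-element $\theta$-values), and then use $\tau\mid J$ to produce a chord $c$ whose shuffle-transition inequality \eqref{eq:diagonal} sets a $\pi$-gap with $J$-elements on either side; the monotonicity $\theta_{j_1}\le\theta_j\le\theta_{j_r}$ then gives the required bound.

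For the direction ``$\Rightarrow$'' I argue the contrapositive on each hypothesis separately. If $\sigma(\tau)\not\preceq\sigma$, then some $I_s$ contains $i\in L_a$ and $i'\in L_{a'}$ with $a\ne a'$; in the relative interior of $\am_\tau$, case (3) of \eqref{eq:circle} gives $\theta_i\ne\theta_{i'}$, contradicting the $\sigma$-equality of $\O_\sigma$. If $\tau\nmid J$, some chord $c$ of $\tau$ has all of $J$ on one side; the $\sigma(\tau)$-parts on that side span at most a half-circle of $\theta$-values in $\am_\tau$, strictly less than $\pi$ at interior points, forcing $\theta_{j_r}-\theta_{j_1}<\pi$ and violating the partial coamoeba inequality.

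The main obstacle is the total-span step in ``$\Leftarrow$'': translating ``$c$ divides $J$'' into the specific bound $\theta_{j_1}+\pi\le\theta_{j_r}$. The cleanest route I see is via the shuffle. A chord whose transition falls in the diagonal case \eqref{eq:diagonal} directly yields a $\pi$-gap straddled by $J$, while a chord in the median case \eqref{eq:median} requires chaining several shuffle transitions around the M\"obius band; one must verify that, under $\tau\mid J$, at least one such chain produces the accumulated $\pi$ gap between $J$-elements.
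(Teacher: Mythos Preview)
Your ``$\Leftarrow$'' direction follows the paper's in invoking Lemma~\ref{lemma:alcoves}, but two details are off. First, the claim that the consecutive bounds $\theta_{j_{s+1}}\le\theta_{j_s}+\pi$ are ``automatic because in the chosen lift every pairwise difference is at most~$\pi$'' is not correct: the bounds in \eqref{eq:circle} are modulo $2\pi$, and in the $\sigma$-lift a consecutive difference can equal~$2\pi$ (when the common $\sigma(\tau)$-part wraps around). Second, the span bound $\theta_{j_1}+\pi\le\theta_{j_r}$ is not a separate obstacle: it is the same inequality applied to the cyclically consecutive pair $(j_r,j_1)$ in $\sigma_J$, so all $r$ inequalities in the third line of \eqref{eq:octa_partial} are on equal footing. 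Each requires $\tau\mid J$ only to rule out the degenerate situation where $J$ would lie in a single $\sigma(\tau)$-part; once that is excluded, cases (1)--(3) of \eqref{eq:circle} give the bound directly, which is why the paper dispatches the whole direction in one sentence.

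Your ``$\Rightarrow$'' direction differs substantially from the paper's. The paper argues constructively: to each point of $\O_{\sigma,J}$ it assigns the net~$\tau$ of \emph{diameters} of a representing polygon~$\mathcal D$, checks geometrically that $\sigma(\tau)\preceq\sigma$ and $\tau\mid J$, and observes that the point lies in~$\am_\tau$. This is stronger than the bare implication: it exhibits $\O_{\sigma,J}$ as the union of exactly those alcoves, which is what the paper actually uses afterwards. Your contrapositive route is legitimate and the $\tau\nmid J$ half is clean (a chord with $J$ on one side forces all $\theta_j$ into an open half-circle at interior points, so $\theta\notin\cc_J$). But the $\sigma(\tau)\not\preceq\sigma$ half has a gap: you only treat the case where some $I_s$ meets two distinct $\sigma(\tau)$-parts. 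You miss the case where every $I_s$ sits in a single $\sigma(\tau)$-part but the induced map on parts is not cyclically monotone, e.g.\ $\sigma=\langle 0,1,2,3\rangle$ against $\sigma(\tau)=\langle\{0,2\},\{1,3\}\rangle$. The fix is easy---at an interior point of $\am_\tau$ the $\theta$-values are in strict $\sigma(\tau)$-cyclic order, and this forces equalities in the $\sigma$-chain that contradict the strict separation of $\sigma(\tau)$-parts---but it needs to be said.
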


\begin{proof}
The ``if'' part follows directly from Lemma \ref{lemma:alcoves}. Indeed, the inequalities \eqref{eq:octa_partial}, which define $\O_{\sigma, J}$, are special cases of  \eqref{eq:circle} for $i,j$ belonging to neighboring subsets in $\sigma(\tau)$.

For  the converse, to a polygon $\mathcal D$ representing a point in $\O_{\sigma,J}$ we associate a net $\tau$ with $\sigma(\tau)\preceq \sigma$ and $\tau | J$ as follows. 
Given a line through a vertex $v$ of $\mathcal D$ we say that $\mathcal D$ lies {\em strictly} on one side of the line if both adjacent edge vectors from $v$ lie in the same {\em open} half-plane. Two vertices are connected by a {\bf diameter} if $\mathcal D$ lies strictly between two parallel lines through these two vertices.  
Any polygon $\mathcal D$ has at least one diameter, a geometric one, and 
clearly, any two diameters intersect. That is, the set of diameters forms a net $\tau$ on a disk with boundary arcs labeled by the sides of $\mathcal D$ (see Fig. \ref{fig:diagonals}). Moreover any diameter must have non-zero edges on both sides of it, that is $\tau$ divides $J$.

A set of diameters in a polygon $\mathcal D$ recovers all pair-wise relations among the directions~$\theta_i$ of its sides. It is easy to see that these relations are given by  \eqref{eq:circle} for the corresponding net $\tau$. Thus the point in $\O_{\sigma,J}$ represented by $\mathcal D$ falls into the alcove~$\am_\tau$ in $\cc^n$.
\end{proof}

\subsection{Phase tropical pair-of-pants as a CW complex} 
Consider
$$F(x)= \max \{x_0,x_1,\dots , x_n\},
$$ 
a convex PL function on $\R^{n+1}$.
Its corner locus is invariant under the diagonal translation by $\R$, hence it descends to an $(n-1)$-dimensional polyhedral fan in $\R^{n+1}/\R$, which is known as the {\bf tropical hyperplane} $\pp^{n-1}$.
The cones $\pp_I$ in $\pp^{n-1}$ are indexed by subsets $I\subseteq \hat n$ of size $|I|\ge 2$:  the cone $\pp_I$ is defined by
$$ x_i=x_j \ge x_k, \text{ for all } i, j\in I, \ k\not\in I.
$$
The vertex $\pp_{\hat n}$ of $\pp^{n-1}$ is at the origin in $\R^{n+1}/\R$.

The closure $\bar\pp^{n-1}$ of $\pp^{n-1}$ in $\Delta$ is a polyhedral complex: the map \eqref{eq:moment} takes each cone $\pp_I$ to a linear subspace in the interior of $\Delta$ and the linearity extends to the closure. Also note that the face $\Delta_{I'}$ of the simplex $\Delta$ intersects the closure of $\pp_I$ only if the subset $I'$ contains~$I$, which gives additional labeling to the boundary faces of $\bar\pp^{n-1}$ by subsets  $I'\supseteq I$. Each face $\pp_{I, I'}$ of $\bar\pp^{n-1}$ is a polytope of dimension $|I'|-|I|$.

The {\bf phase tropical pair-of-pants} $\tt\pp^{n-1} \subset (\C^*)^{n+1}/\C^* = (\R^{n+1}/\R) \times \TT^n$ is the union
$$\tt\pp^{n-1}: = \bigcup_{I\subseteq \hat n} (\pp_I \times \cc_I).
$$
The compactified version $\tt \bar \pp^{n-1}$ is the closure of $\tt\pp^{n-1}$ in  $\Delta \times \TT^n$.
The $(\sigma,J)$-stratification on $\Delta \times \T^n$ induces a stratification of $\tt \bar \pp^{n-1}$. We denote by $\bar \Psi_{\sigma, J}$ the corresponding closed stratum of $\tt \bar \pp^{n-1}$. 

Proposition \ref{prop:alcoves} says that each $\O_{\sigma,I}$ is triangulated into alcoves $\am_\tau$ with $\sigma(\tau)\preceq\sigma$  and $\tau | I$. This makes $\bar\Psi_{\sigma, J}$ into a polyhedral complex in $\Delta \times \TT^n$:
$$\bar\Psi_{\sigma, J}= \bigcup_{(I, I', \tau)}  \pp_{I,I'} \times \am_{\tau},
$$
where the triples $(I, I', \tau)$ satisfy $I \subseteq I' \subseteq J$, $\sigma(\tau) \preceq \sigma$ and $\tau | I$. The face order between legitimate triples is $(I, I', \tau)\preceq (\tilde I, \tilde I', \tilde \tau)$ if $I\supseteq \tilde I$, $I'\subseteq \tilde I'$ and $\tau\preceq \tilde \tau$.

\begin{proposition}\label{prop:tropicalCW}
The decomposition of $\tt\bar\pp^{n-1}$ into $ \bar \Psi_{\sigma,J}$ is a regular CW complex.
\end{proposition}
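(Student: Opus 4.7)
The plan is to mimic the two-step strategy of Proposition \ref{prop:complexCW}: first establish the boundary-inclusion analogue of Proposition \ref{prop:boundary} for the strata $\bar\Psi_{\sigma,J}$, and then show that each closed stratum is a closed ball of dimension $\rk(\sigma,J) = k+|J|-4$. Throughout, the polyhedral decomposition
$$\bar\Psi_{\sigma,J} = \bigcup_{(I,I',\tau)} \pp_{I,I'} \times \am_{\tau}$$
will serve as scaffolding.

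For the boundary claim $\bar\Psi_{\sigma',J'} \subseteq \bar\Psi_{\sigma,J}$ iff $(\sigma',J') \preceq (\sigma,J)$, the ``only if'' direction is immediate from how the $(\sigma,J)$-stratification is defined on the ambient $\Delta \times \TT^n$. For the converse, given $(\sigma',J') \preceq (\sigma,J)$ in $\mathcal W$, I would explicitly produce a polyhedral cell $\pp_{I,I'} \times \am_{\tau}$ in $\bar\Psi_{\sigma,J}$ with a face $\pp_{\tilde I,\tilde I'} \times \am_{\tilde \tau}$ meeting $\Psi_{\sigma',J'}$; the face order on triples recorded in the excerpt reduces this to a direct combinatorial construction on cyclic partitions, subsets, and nets.

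To show each $\bar\Psi_{\sigma,J}$ is a closed ball, I would follow the skeleton of Proposition \ref{prop:the_ball}. First, the open stratum $\Psi_{\sigma,J}$ is homeomorphic to $\R^{k+|J|-4}$: it is an open union of top-dimensional product cells $\pp_{I,J}^{\circ} \times \am_\tau^{\circ}$ with $\sigma(\tau)=\sigma$, $\tau$ maximal, and $I$ minimal subject to $\tau \mid I$, and explicit coordinates in the spirit of Lemma \ref{lemma:interior_ball} (shape parameters on the tropical side together with angle parameters on the torus side) identify this open set with Euclidean space of the correct dimension. Second, $\bar\Psi_{\sigma,J}$ is a compact topological manifold with corners: local manifold charts can be read off the polyhedral structure using Proposition \ref{prop:alcoves} and Lemma \ref{lemma:alcoves}, which organize the alcoves into a simplicial-manifold triangulation of the coamoeba, together with the standard polyhedral-manifold structure of $\bar\pp^{n-1} \subset \Delta$. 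Having established both, the $h$-cobordism argument of Proposition \ref{prop:the_ball} identifies $\bar\Psi_{\sigma,J}$ with the closed ball.

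The main obstacle I anticipate is verifying the manifold-with-corners property globally, especially at deep polyhedral cells where many pieces $\pp_{I,I'}\times\am_\tau$ meet simultaneously. A priori, two distinct top cells could share a codimension-one face from the same side, destroying the manifold property. The remedy is to exploit the coupling $\tau \mid I$ between the two factors: this constraint forces each interior codimension-one face of $\bar\Psi_{\sigma,J}$ to be shared by exactly two top cells, while boundary codimension-one faces lie inside some $\bar\Psi_{\sigma',J'}$ with $(\sigma',J') \prec (\sigma,J)$. Once this compatibility is in place, the passage from manifold to ball proceeds as in the complex case.
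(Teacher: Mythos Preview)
Your overall plan mirrors the complex case, but both key steps hide serious difficulties that the paper handles quite differently.

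For the ball property, you propose to show (a) the interior $\Psi_{\sigma,J}$ is Euclidean and (b) the closure is a manifold with boundary, then invoke $h$-cobordism. Step (a) is far from evident here: unlike $\Phi_{\sigma,J}$, which was parametrized directly by polygons, the open stratum $\Psi_{\sigma,J}$ is already a nontrivial union of several top-dimensional product cells $\pp_{I,J}\times\am_\tau$ (of two combinatorial types, cf.\ Lemma~\ref{lemma:tropicalCW}) glued along lower faces, and there is no obvious coordinate system identifying this union with $\R^{k+|J|-4}$. The paper sidesteps this entirely: instead of proving the interior is Euclidean, it proves that $\bar\Psi_{\sigma,J}$ is \emph{collapsible} (Lemma~\ref{lemma:tropicalCW}, via an explicit matching on the face poset of triples $(I,I',\tau)$) and then cites the regular-neighborhood theorem, so that a collapsible PL manifold with boundary is a ball.

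The more serious gap is step (b). Your proposed check---that each interior codimension-one face is shared by exactly two top cells---yields only the \emph{pseudomanifold} property; it says nothing about links of deeper faces, which is precisely where the danger lies. Your appeal to ``the standard polyhedral-manifold structure of $\bar\pp^{n-1}$'' is mistaken: the tropical hyperplane $\pp^{n-1}$ is \emph{not} a manifold at its vertex for $n\ge 3$, and neither are the individual coamoeba fibers. What makes $\bar\Psi_{\sigma,J}$ a manifold is a delicate cancellation: as one moves toward the vertex of $\pp^{n-1}$ the fiber $\O_{\sigma,I}$ grows, and the two singularities offset one another. The paper captures this with Lemma~\ref{lemma:convex}: for a full-dimensional convex cone $\mathcal R$, the total supporting tangent space $T\mathcal R=\bigcup_{v\in\partial\mathcal R}\{v\}\times T_v\mathcal R$ is homeomorphic to $\R^{2n-2}$, even though $\partial\mathcal R$ itself is singular. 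Lemma~\ref{lemma:Psi} then identifies the local fan of $\bar\Psi_{\sigma,J}$ at each vertex with a preimage $\phi^{-1}(C_{\sigma_0,\sigma})$ inside such a $T\mathcal R$, and deduces the manifold-with-boundary property from there. This convex-cone argument is the essential new idea on the phase tropical side and is absent from your sketch.
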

\begin{proof}
Lemmas \ref{lemma:tropicalCW} and \ref{lemma:Psi} below show that each $\bar\Psi_{\sigma,J}$ is a collapsible PL manifold with boundary of dimension $\rk(\sigma,J)$. Then a version of the regular neighborhood theorem (cf., e.g., \cite{Forman}, Theorem 1.6) implies that $\bar \Psi_{\sigma,J}$ is homeomorphic to the closed ball of dimension $\rk(\sigma,J)$. 
\end{proof}

We begin with the collapsibility. 
Recall the collapsing operation on a polyhedral complex $X$. Let $F$ be a face of $X$ and let $G$ be a facet of $F$, such that $G$ is not a subface of any other face in $X$. Then we can remove both $F$ and $G$ and call this an elementary collapse.    
We say that a polyhedral complex is {\bf collapsible} if it can be reduced to a vertex by a sequence of elementary collapses.

\begin{lemma}\label{lemma:tropicalCW}
$\bar\Psi_{\sigma,J}$ is a collapsible polyhedral complex of pure dimension $\rk(\sigma,J)$.
\end{lemma}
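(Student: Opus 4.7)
The plan is to verify separately that $\bar\Psi_{\sigma,J}$ is a polyhedral complex of pure dimension $\rk(\sigma,J)$ and that it is collapsible.

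For the dimension count, I would first observe that each cell $\pp_{I,I'}\times\am_\tau$ is a product of a polytope of dimension $|I'|-|I|$ with a simplex of dimension $l-1$, where $l$ is the number of chords of $\tau$. Subject to the constraints $I\subseteq I'\subseteq J$, $|I|\ge 2$, $\sigma(\tau)\preceq\sigma$, and $\tau | I$, the quantity $(|I'|-|I|)+(l-1)$ is maximized at $k+|J|-4=\rk(\sigma,J)$, and the maximum is realized in two flavors: either $I'=J$, $|I|=3$, and $\tau$ is a triangulation with $\sigma(\tau)=\sigma$ (so $l=k$), or $I'=J$, $|I|=2$, and $\tau$ is a net with a single trapezoid whose opposite sides separate the two elements of $I$ (so $l=k-1$). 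Compatibility of the cells is inherited from the ambient polyhedral decomposition of $\Delta\times\TT^n$, using Proposition \ref{prop:alcoves} for the alcove side. Purity is verified by a routine extension argument: any legitimate triple $(I,I',\tau)$ is brought to a maximal one by first setting $I'=J$, then greedily refining $\tau$ while enlarging $I$ only as much as required to preserve $\tau | I$.

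For collapsibility I would use discrete Morse theory. Fix an ordering $j_1,\dots,j_{|J|}$ of $J$ and run $|J|$ successive matching rounds: in round $s$, among cells $(I,I',\tau)$ satisfying $\{j_1,\dots,j_{s-1}\}\subseteq I$ and $j_s\notin I$, pair $(I,I',\tau)$ with $(I,I'\cup\{j_s\},\tau)$. Both sides of the pairing are legitimate cells (since $\tau | I$ is unchanged and $j_s\in J\setminus I'$), and the pairing is an involution on the round-$s$ subposet. After all $|J|$ rounds the unpaired cells satisfy $J\subseteq I\subseteq I'\subseteq J$, so $I=I'=J$; they are precisely the cells $\{\pp_{J,J}\}\times\am_\tau$ with $\sigma(\tau)\preceq\sigma$ and $\tau | J$. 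By Proposition \ref{prop:alcoves} these alcoves form a polyhedral subdivision of the polytope $\O_{\sigma,J}\subset\TT^n$, which is collapsible via any shelling of its face complex. Composing the iterated Morse matching with this final polytope collapse yields a collapse of $\bar\Psi_{\sigma,J}$ to a point.

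The main obstacle will be verifying acyclicity of the iterated matching---equivalently, arranging the pairings as a valid sequence of elementary collapses. The invariant to maintain is that at the moment of each collapse the matched cell is in fact a free face, which requires organizing the rounds so that Hasse edges out of a just-matched cell travel only to cells that have already been collapsed or lie in the subcomplex targeted for a later round. A secondary point is that the alcoves in $\O_{\sigma,J}$ form a nontrivial subdivision of the polytope, so the final shelling step must be applied to that subdivision rather than to the intrinsic face lattice of $\O_{\sigma,J}$; this is combinatorially standard but not automatic.
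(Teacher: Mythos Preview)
Your approach is essentially the paper's: both arguments collapse the $I'$-direction first, reducing $\bar\Psi_{\sigma,J}$ to the alcove triangulation of the polytope $\O_{\sigma,J}$, and then collapse that polytope. The paper organizes the first stage as an inductive removal of the Boolean $I'$-intervals $[(I,I,\tau),(I,J,\tau)]$, processed in an order governed by the number of chords in $\tau$ and $|I|$; you organize it as a discrete Morse matching that toggles a distinguished element $j_s\in J$ in $I'$. These are two packagings of the same reduction, and your acyclicity worry is easily resolved: along any gradient path the index $s$ (the least $s$ with $j_s\notin I$) is nondecreasing, and the only non-dead-end down-move is adding $j_s$ itself to $I$, which strictly increases $s$.

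One slip to fix in your purity step: to pass from a cell $(I,I',\tau)$ to a \emph{coface} you must \emph{shrink} $I$ (and refine $\tau$, enlarge $I'$), not enlarge it, since $(I,I',\tau)\preceq(\tilde I,\tilde I',\tilde\tau)$ requires $\tilde I\subseteq I$. Refining $\tau$ may force you to keep $I$ large enough that each new chord still divides it, but the extension procedure is ``refine $\tau$ and shrink $I$ subject to $\tau\,|\,I$'' rather than the other way around; this is exactly the paper's ``adding chords to $\tau$ and/or removing elements from $I$.''
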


\begin{proof}
The maximal faces $(I, I', \tau)$ in $\bar\Psi_{\sigma,J}$ are of two types. Type I: $\tau$ is a maximal net with $\sigma(\tau)=\sigma$ and $I$ has three elements (a maximal $\tau$ cannot divide a set of two elements). Type II: $\tau$ contains a single trapezoid (and $k-2$ triangles) and $|I|=2$, its two elements belong to the opposite sides of the trapezoid.
In both cases $I'=J$ and 
$$\dim ( \pp_{I,J} \times \am_{I(\tau)}) = |J| -|I| + (\# \{\text{chords in } \tau\} -1) = |J| + k -4 = \rk (\sigma, J).
$$
Any face $(I, I', \tau)$ is a subface of $(I, J, \tau)$. Adding chords to $\tau$ and/or removing elements from $I$ one can see that any face is a subface of a maximal face. That is, the complex is indeed of pure dimension $\rk(\sigma,J)$.

To collapse $\bar \Psi_{\sigma,J}$ we look at its face lattice. 
For a given pair $(I, \tau)$ the interval between $(I, I, \tau)$ and $(I, J, \tau)$ consists of all subsets $I'$ between $I$ and $J$. In particular, it is Boolean, hence possesses a matching unless $I=J$. 
Note that if $\tau$ is maximal, elements in the interval $[(I, I, \tau),(I, J, \tau)]$ are not subfaces of anything outside the interval. Then we can remove the entire interval. The same can be said about intervals with $|I|=2$. Proceeding by alternating  induction on number of chords in $\tau$ and number of elements in $I$ we remove all faces of $\bar \Psi_{\sigma,J}$ except those with $I=J$.

Thus, it remains to collapse the fiber over the vertex  $\pp_{J, J}$. This fiber is the polytope $\O_{\sigma,J}$ triangulated into alcoves which is clearly collapsible.
\end{proof}

\begin{lemma}\label{lemma:Psi}
$\bar\Psi_{\sigma,J}$ is a topological manifold with boundary.
\end{lemma}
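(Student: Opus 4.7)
My plan is to verify the PL manifold-with-boundary property by analyzing the links of the cells of $\bar\Psi_{\sigma,J}$. Recall that a pure $d$-dimensional polyhedral complex is a PL manifold with boundary of dimension $d$ precisely when the link of every cell is a PL sphere (if the cell lies in the topological interior) or a PL ball (if it lies on the topological boundary) of the appropriate dimension. Since Lemma \ref{lemma:tropicalCW} already gives that $\bar\Psi_{\sigma,J}$ is pure of dimension $\rk(\sigma,J)$, it suffices to verify these link conditions.

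Fix a cell $(I_0, I'_0, \tau_0)$. Its star in $\bar\Psi_{\sigma,J}$ is indexed by triples $(I,I',\tau)$ with $I\subseteq I_0$, $I'\supseteq I'_0$, and $\tau\succeq\tau_0$ in the refinement order of nets, subject to $I\subseteq I'\subseteq J$, $\sigma(\tau)\preceq\sigma$, and $\tau\mid I$. The $I'$-coordinate is decoupled from $(I,\tau)$ and ranges freely over the Boolean interval $[I'_0, J]$, contributing a simplex join-factor of dimension $|J|-|I'_0|-1$ (or a partial simplex, when the cell meets a topological boundary stratum). The substantial part is therefore to analyze the joint $(I,\tau)$-link, where the divisibility condition $\tau\mid I$ genuinely couples the two directions.

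For this I would follow the polygon / M\"obius-band picture of Lemma \ref{lemma:manifold} and Proposition \ref{prop:alcoves}. Near a representative polygon $\mathcal{D}$ of $(I_0, I'_0, \tau_0)$, neighboring cells correspond to refining $\tau_0$ (adding a new chord, which introduces a new vertex of $\mathcal{D}$ with a non-negative exterior angle) and to shrinking $I_0$ (collapsing an edge of $\mathcal{D}$ to length zero). Mimicking the local coordinate system \eqref{eq:coords} used for $\bar\Phi_{\sigma,J}$, I would construct local coordinates on a neighborhood in $\bar\Psi_{\sigma,J}$ of the form $\R_{\geq 0}^a \times \R^b \times \R_{\geq 0}^c \times \R^d$ with $a+b+c+d=\rk(\sigma,J)$. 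Here the $\R_{\geq 0}$-directions correspond precisely to refining $\tau$ past $\tau_0$ or shrinking $I$ past $I_0$, while the free $\R$-directions correspond to deformations that stay inside the open star. This would in fact show that $\bar\Psi_{\sigma,J}$ is a manifold with corners.

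The main obstacle I anticipate is maintaining the coupling $\tau\mid I$ consistently while the coordinates range over their half-spaces: once an element of $I_0$ is removed, certain refinements of $\tau_0$ become forbidden (chords that fail to divide the smaller $I$). I expect this to be controlled by the chord/edge combinatorics of the M\"obius-band subdivision, in close analogy with the "bending edges" deformation that drives Lemma \ref{lemma:manifold}. Indeed, the parameterizing data on both sides is the same (edge lengths and exterior angles of a convex polygon), which is the structural reason $\bar\Psi_{\sigma,J}$ and $\bar\Phi_{\sigma,J}$ ought to be locally matched; the present lemma only asks for the manifold-with-boundary consequence, and once the local coordinates are in place the half-space structure at every boundary stratum reduces to bookkeeping about which of the $\R_{\geq 0}$-directions have already been ``closed off''.
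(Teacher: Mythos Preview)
Your decoupling of the $I'$-direction is correct, and you rightly isolate the $(I,\tau)$-coupling via $\tau\mid I$ as the crux. But the plan to ``mimic the local coordinate system \eqref{eq:coords}'' from Lemma~\ref{lemma:manifold} has a genuine gap. In $\bar\Phi_{\sigma,J}$ the points \emph{are} convex polygons, so edge lengths and exterior angles furnish local coordinates, with the closing-up constraint absorbing the dependence between them. In $\bar\Psi_{\sigma,J}$ there is no polygon: the simplex coordinates $x_i$ on $\pp_{I,I'}$ and the angle coordinates $\theta_i$ on $\am_\tau$ are independent, with no closing-up relation tying them together. Your assertion that ``the parameterizing data on both sides is the same'' is the intuition behind the eventual CW-isomorphism, not a fact you can invoke here.

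Concretely, at a vertex $v=\pp_{\hat n}\times\{a\}$ over the origin of $\pp^{n-1}$, the local fan of $\bar\Psi_\sigma$ is \emph{not} of the product form $\R_{\ge 0}^a\times\R^b\times\R_{\ge 0}^c\times\R^d$ that you assert. It fibers over a subfan $\pp(a)\subset\pp^{n-1}$ (the cones $\pp_I$ with $I$ divided by the $2$-partition $\sigma_a$) with fibers that \emph{jump in dimension} from stratum to stratum: over a top face the fiber is a single hyperplane, while over smaller faces the supporting tangent cone grows. This is exactly the structure of the total supporting tangent space $T\mathcal R$ of a convex polyhedral cone, and showing $T\mathcal R\cong\R^{2n-2}$ is a nontrivial statement that the paper proves separately (Lemma~\ref{lemma:convex}, via the nonlinear homeomorphism $\psi(v,u)=(\pi(v)+\tilde\lambda(u)\,\pi(u),\,\pi(u))$). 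Your ``anticipated obstacle''---that shrinking $I$ forbids certain refinements of $\tau$---is precisely the combinatorial shadow of this varying-fiber phenomenon, and it cannot be resolved by bookkeeping: the convexity of $\mathcal R$ (equivalently, of the zonotope whose complement is $\cc^n$) is what makes the local fan a half-space, and your outline never invokes it.
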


Before proving Lemma \ref{lemma:Psi} we discuss a certain general property of convex cones.
Let $V$ be an $n$-dimensional vector space over $\mathbb{R}$ and write $V^*$ for its dual.  Let $\mathcal R \subset V$ and $\RR^\vee \subset V^*$ be a dual pair of convex cones. That is
\begin{align}
\mathcal{R} = \{v \in V \suchthat \lambda (v) \geq 0, \lambda \in \RR^\vee \},
\end{align}
and vice versa. We assume both $\RR$ and $\RR^\vee$ have non-empty interiors. For any $v \in \mathcal{R}$ we define the {\bf supporting tangent cone} $T_v \mathcal{R}$ to be the set of vectors in $V$ lying in supporting hyperplanes to $\mathcal R$ at $v$.

\begin{figure}[h]
	\begin{equation*}
	\begin{tikzpicture}[cross line/.style={preaction={draw=white, -, line width=6pt}}]
	\node[anchor=south west,inner sep=0] (image) at (0,0) {\includegraphics{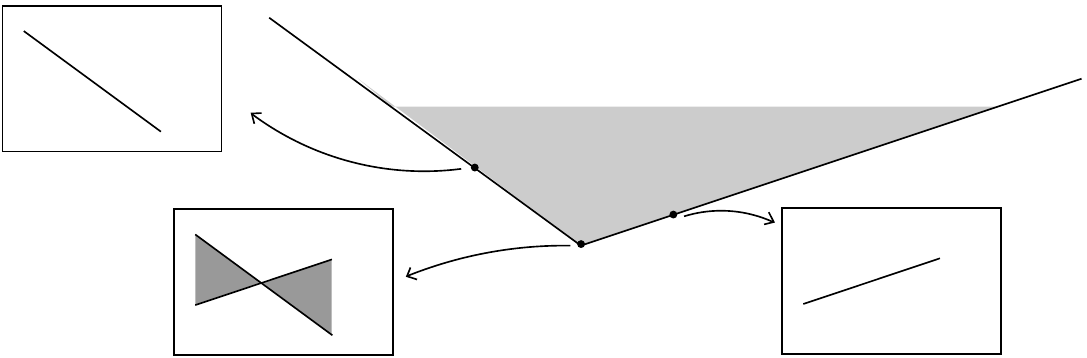}};
	\node[align=center] at (4.8,1.7) {\tiny $v_1$};
	\node[align=center] at (5.9,.8) {\tiny $v_2$};
	\node[align=center] at (1.4,3.2) {\footnotesize $T_{v_1} \mathcal{R}$};
	\node[align=center] at (7.1,1.2) {\tiny $v_3$};
	\node[align=center] at (3,1.2) {\footnotesize $T_{v_2} \mathcal{R}$};
	\node[align=center] at (8.8,1.2) {\footnotesize $T_{v_3} \mathcal{R}$};
	\end{tikzpicture} 
	\end{equation*}
	\caption{\label{fig:tangents} Two dimensional cone and fibers of the total tangent cone of its boundary.}
\end{figure}

We define the {\bf total supporting tangent space} of $\mathcal{R}$ to be the union 
\begin{align}\label{eq:TR}
T \mathcal{R} := \bigcup_{v \in \mathcal{R}} (\{v\} \times T_v \mathcal{R} ) = \bigcup_{\lambda\in \RR^\vee \setminus \{0\}} ((\ker \lambda \cap \RR) \times \ker \lambda) \subset V \times V.
\end{align}  
The first presentation shows that $T \mathcal{R} $ is a fibration over $\mathcal R$ supported on its boundary $\partial R$. An example of a two dimensional polyhedral cone $\mathcal{R}$ is illustrated in Figure~\ref{fig:tangents}.

We fix a vector $\tilde{v}$ in the interior of $\mathcal{R}$ and a vector $\tilde \lambda$ in the interior of $\RR^\vee$, such that $\tilde\lambda (\tilde{v}) =1$. Then in the second presentation of $T\RR$ in \eqref{eq:TR} it is enough to take the union over $\{\lambda \in \RR^\vee \suchthat \lambda(\tilde v) =1\}$. 

Denote by $W$ the quotient space $V/(\R\tilde v)$, and we  write $\pi : V \to W$ for the projection.  Consider the map $\phi : T \mathcal{R} \to W$ given by $\phi (v, u) = \pi(u)$.

\begin{lemma}\label{lemma:convex}
The total supporting tangent space $T \mathcal{R}$ is homeomorphic to $\mathbb{R}^{2n - 2}$. Moreover, the map $\phi :  T \mathcal{R} \to W$ is a trivial fiber bundle with fiber homeomorphic to $W\cong \R^{n-1}$.
\end{lemma}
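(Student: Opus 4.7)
The plan is to reduce the statement to explicit convex geometry in the two convex-body cross-sections $K' := \mathcal{R}\cap\{\tilde\lambda=1\}$ and $L := \mathcal{R}^\vee\cap\{\lambda(\tilde v)=1\}$. Both are compact convex bodies of dimension $n-1$; in particular $\partial K'\approx S^{n-2}$, so the boundary $\partial\mathcal{R}$, being the cone on $\partial K'$, is homeomorphic to $\mathbb{R}^{n-1}\approx W$. The first step is to exhibit a continuous section $s:W\to T\mathcal{R}$ of $\phi$. Identify $W$ with $\ker\tilde\lambda\subset V$ and put $s(w):=(0,w)$. This lies in $T\mathcal{R}$ because $\tilde\lambda(w)=0$ and $\tilde\lambda>0$ on $\operatorname{int}(\mathcal{R})$ force $w\notin\operatorname{int}(\pm\mathcal{R})$, so $w\in T_0\mathcal{R}$.

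Next I describe the fibers of $\phi$ explicitly. In the decomposition $V=W\oplus\mathbb{R}\tilde v$, write $u=w+s\tilde v$; then $(v,u)\in T\mathcal{R}$ with $\phi(v,u)=w$ iff $v\in\partial\mathcal{R}$ and $s=-\lambda(w)$ for some $\lambda\in L_v:=\{\lambda\in L\suchthat\lambda(v)=0\}$. Hence the natural projection $(v,u)\mapsto v$ makes $\phi^{-1}(w)$ into an ``interval bundle'' over $\partial\mathcal{R}$ with compact fiber
\[
I_v(w)\;:=\;\{-\lambda(w)\suchthat\lambda\in L_v\},
\]
the image of the face $L_v$ of $L$ under the linear functional $\lambda\mapsto-\lambda(w)$. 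For $v$ in the relative interior of a facet of $\mathcal{R}$ the set $L_v$ is a single point and $I_v(w)$ degenerates to a point; along lower-dimensional strata of $\partial\mathcal{R}$, and especially at the apex $v=0$ where $L_0=L$, the interval $I_v(w)$ may be non-trivial.

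The trivialization is then obtained by exploiting that, along any radial ray in $\partial\mathcal{R}\setminus\{0\}$, the face $L_v$ is constant, so $I_v(w)$ depends only on the class $k\in\partial K'$ of $v$ and on $w$. Thus $\phi^{-1}(w)$ inherits a conical decomposition identifying it with the cone on $\partial K'$ in which the apex has been ``thickened'' into the interval $I_0(w)$ (together with subordinate thickenings $I_k(w)\subseteq I_0(w)$ along lower strata of $\partial K'$). A radial reparametrization absorbs each thickening into a dilation of the radial coordinate and produces, continuously in $w$, a homeomorphism $\phi^{-1}(w)\xrightarrow{\sim}\partial\mathcal{R}$. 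Assembling these over $w$ gives a trivialization $T\mathcal{R}\approx W\times\partial\mathcal{R}\approx\mathbb{R}^{n-1}\times\mathbb{R}^{n-1}=\mathbb{R}^{2n-2}$.

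The hardest step is the continuity of this trivialization as $w$ varies across $0$, where every interval $I_v(w)$ collapses simultaneously so the trivialization must reduce to the identity on $\phi^{-1}(0)=\partial\mathcal{R}\times\{0\}$. The saving grace is that $L_v$ depends only on the (locally constant) face of $\mathcal{R}$ containing $v$, while $\lambda\mapsto\lambda(w)$ is linear in both arguments, so the family $\{I_v(w)\}$ varies continuously in $(v,w)$ in the Hausdorff metric. Coupled with the convexity of $L$ and of each $L_v$, this yields the desired continuous globalization, and completes the proof.
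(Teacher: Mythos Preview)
Your description of the fibres $\phi^{-1}(w)$ as an ``interval bundle'' over $\partial\mathcal{R}$ with fibre $I_v(w)=\{-\lambda(w):\lambda\in L_v\}$ is correct and useful. The gap is in the next step: you assert that ``a radial reparametrization absorbs each thickening into a dilation of the radial coordinate'' and yields a homeomorphism $\phi^{-1}(w)\to\partial\mathcal{R}$, but no such map is written down, and a genuinely \emph{radial} map cannot work. At the apex $v=0$ there is no radial direction, yet for $w\ne 0$ the fibre there is a nondegenerate interval $I_0(w)$ (the image of all of $L$ under $-\lambda(w)$); any map of the form $(tk,s)\mapsto \rho(t,k,s)\cdot k$ must send this whole interval to the apex, destroying injectivity. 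The same obstruction appears along every lower-dimensional face of $\partial\mathcal{R}$: there $I_v(w)$ is a nontrivial interval, constant along the ray through $v$, and a radial reparametrization cannot separate distinct points $(v,s_1)$, $(v,s_2)$. So the sentence doing the real work in your argument is precisely the one that is not justified. Your final paragraph treats continuity at $w=0$ as the hardest step, but in fact you have not yet produced the homeomorphism for any single $w\ne 0$.

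For comparison, the paper's proof sidesteps this by writing down an explicit global map
\[
\psi(v,u)=\bigl(\pi(v)+\tilde\lambda(u)\cdot\pi(u),\ \pi(u)\bigr),
\]
which on a fibre $\phi^{-1}(w)$ (writing $u=w+s\tilde v$, so $s=\tilde\lambda(u)$) becomes $(v,s)\mapsto \pi(v)+s\,w$. This is \emph{not} a radial move in $\partial\mathcal{R}$: it slides in the fixed direction $w\in W$, and that is exactly what lets the thickened apex and the thickened lower strata unfold injectively into $W$. Injectivity is then checked by a short computation using the dual-cone inequalities $\lambda_1(v_2)\ge 0$, $\lambda_2(v_1)\ge 0$, surjectivity by the intermediate value theorem on each fibre, and continuity of the inverse by a uniform bound on $L$. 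If you want to salvage your approach, you will need a map of this non-radial type; the ``radial reparametrization'' picture, as stated, does not produce one.
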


\begin{proof}
We will show that the following map $\psi :T \mathcal{R} \to W \times W $ given by 
	\begin{align} \label{eq:homeo}
	\psi ( v, u ) = \left( \pi(v) + \tilde \lambda( u) \cdot \pi (u), \pi(u) \right).
	\end{align}
is a homeomorphism. The geometric meaning of the map $\psi$ is to ``stretch out'' the fibers $\phi^{-1} (w)$ into $\R^{n-1}$, see Figure \ref{fig:dirder}.
\begin{figure}[h]
	\includegraphics[scale=.25]{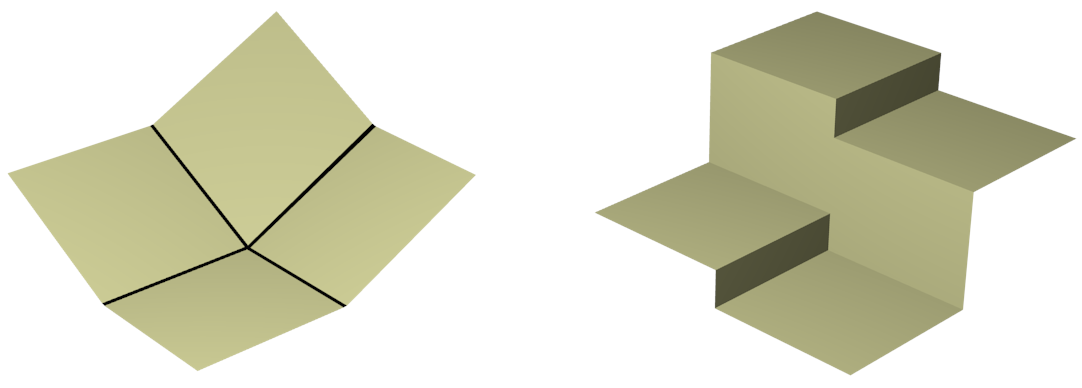}
	\caption{\label{fig:dirder} The boundary $\partial \RR$ on the left and an example of the fiber  $\phi^{-1} (w)$ for some $w\in W$. $\phi^{-1} (w)$ is homeomorphic to $\R^{2}$.}
\end{figure} 
Then $\phi : T  \mathcal{R} \to W$ is the composition of $\psi$ with the projection onto the second factor and, hence, is a topologically trivial fiber bundle with fiber homeomorphic to $W$.

The map $\psi$ is clearly continuous. Surjectivity follows from the Intermediate Value theorem applied to each fiber $\phi^{-1} (w)$ for a given $w\in W$. 

Injectivity: Let $\psi ( v_1, u_1 )= \psi ( v_2, u_2 )$. Then according to \eqref{eq:homeo} we must have 
\begin{equation}\label{eq:v=v}
\begin{split}
v_1-v_2 + \tilde \lambda(u_1-u_2) \cdot u_{1} &= 0 \mod \tilde v\\
v_1-v_2 + \tilde \lambda(u_1-u_2) \cdot u_{2} &= 0 \mod \tilde v.
\end{split}
\end{equation}
Say, $v_1, u_1 \in \ker \lambda_1$ and $v_2, u_2 \in \ker \lambda_2$ for some $\lambda_{1,2} \in \RR^\vee$ with $\lambda_{1,2}(\tilde v)=1$. Note that $\lambda_1(v_2)\ge 0$ and $\lambda_2(v_1)\ge 0$. Applying $\lambda_1$ and $\lambda_2$, to the respective equations \eqref{eq:v=v} we get 
\begin{align*}
v_1-v_2 + \tilde \lambda(u_1-u_2) \cdot u_{1} &= -\lambda_1(v_2) \cdot \tilde v\\
v_1-v_2 + \tilde \lambda(u_1-u_2) \cdot u_{2} &= \lambda_2(v_1) \cdot \tilde v
\end{align*}
Subtracting one equation from another we get 
\begin{equation}\label{eq:u=u}
 \tilde \lambda(u_1-u_2) \cdot (u_{1}-u_2) = - (\lambda_1(v_2) +\lambda_2(v_1)) \cdot \tilde v.
\end{equation}
Finally applying $\tilde \lambda$ we arrive at $\tilde \lambda(u_1-u_2)^2 \le 0$, which combined with $\pi(u_1)=\pi(u_2)$ gives $u_1=u_2$. That implies that $\pi(v_1)=\pi (v_2)$, and since both $v_{1,2}\in \partial \RR$, we have $v_1=v_2$.

Continuity of the inverse: fix a Euclidean metric on $ \ker \tilde \lambda \cong W$ and extend it to a Euclidean metric on $V$. Then notice that the linear functionals $\lambda\in \RR^\vee$ with $\lambda(\tilde v)=1$ are uniformly  bounded, and so are the ratios $|u|/|\pi(u)| , |v|/|\pi(v)|$ for all $(v, u) \in T\RR$.
Suppose $|\psi ( v_1, u_1 )- \psi ( v_2, u_2 )| < \epsilon$. Then following the injectivity arguments above we deduce that $|v_1-v_2| < C\epsilon$ and $| u_1- u_2| < C\epsilon$ for some universal constant $C$.
\end{proof}

\begin{remark}
The lemma is a consequence of the convexity property of $\RR$. It holds for $\RR$ replaced by the upper graph of any convex function $f: W \to \R$.
\end{remark}

For the proof of Lemma \ref{lemma:Psi} we will apply the above lemma to the following convex cone. Let 
$\sigma_0=\<I_-, I_+\>
$ 
be a cyclic 2-partition of~$\hat n$ together with a choice of the initial subset. Define a convex polyhedral cone $\mathcal R \subset \R^{n+1}/\R$ by the following set of inequalities:
\begin{equation}\label{eq:convex}
x_{i_-} \le x_{i_+}, \quad  i_-\in I_-, i_+ \in I_+.
\end{equation}
Its boundary $\partial \mathcal R$ is a polyhedral fan, whose cones $\mathcal R_{I}$ are labeled by 
subsets $I\subseteq \hat n$ which are divided by $\sigma_0$, i.e. both $I_-\cap I$ and $I_+\cap I$ are non-empty. That is, the face lattice of $\partial \mathcal R$ is dual to the face lattice of the product of simplices $ \Delta_{I_-} \times \Delta_{I_+}$.
The cone $\mathcal R_{I}$ is defined by 
\begin{equation}\label{eq:face_convex}
\begin{split}
x_i = x_{i'}, & \text{ for } i, i' \in I,\\
x_{i_-} \le x_{i_+}, & \text{ for } i_\pm\in I_\pm.
\end{split}
\end{equation}

We identify the tangent space of $\R^{n+1}/\R$ at any point with $\R^{n+1}/\R$ and consider the total supporting tangent space:
$$T \mathcal R \subset (\R^{n+1}/\R) \times (\R^{n+1}/\R).
$$
Let $[u_0, \dots, u_n]$ be the homogeneous coordinates in the second (tangent) factor which are parallel to $[x_0, \dots, x_n]$. Following the notation introduced just before Lemma \ref{lemma:convex}, we let $\tilde v \in \R^{n+1}/\R$ be the vector with coordinates 
\begin{equation}\label{eq:tilde_v}
\begin{cases}
u_{i_-} = 0,\quad i_- \in I_- \\
u_{i_+} = 1, \quad i_+ \in I_+
\end{cases}
\end{equation}
and denote by $W$ the quotient $(\R^{n+1}/\R)/(\R\tilde v)$. Let $\phi: T \mathcal R \to  W$ be the projection onto $W$ in the tangent factor.

Now let
$$\sigma=\< \underbrace{I_1, \dots, I_{r}}_{I_-}, \underbrace{I_{r+1}, \dots, I_k}_{I_+}\>,
$$ 
be a refinement of $\sigma_0$.
The following set of inequalities 
\begin{equation}\label{eq:C_v}
\begin{split}
u_i=u_{i'} =: u_{I_s}, &  \text{ for } i, i' \in I_s, \\
u_{I_1}\le \dots \le u_{I_{r}}, & \  u_{I_{r+1}} \le \dots  \le u_{I_k},
\end{split}
\end{equation}
defines a cone $C_{\sigma_0,\sigma}$ in $W=(\R^{n+1}/\R)/(\R\tilde v)$. Then by Lemma~\ref{lemma:convex}, $\phi^{-1}(C_{\sigma_0, \sigma})$ is homeomorphic to $W \times C_{\sigma_0,\sigma}$, which is a manifold with boundary. 

Next we describe the fibers of the projection of $\phi^{-1}(C_{\sigma_0, \sigma})\subset \partial \mathcal R \times (\R^{n+1}/\R)$ onto the first factor. Let $x$ be a point in the relative interior of a face $\mathcal R_{I}$ of $\partial \mathcal R$. A vector $u \in \R^{n+1}/\R$ is in the supporting tangent cone $T_x \mathcal R$ if and only if it is in the kernel of some non-zero linear functional $\lambda\in\RR^\vee$ with $\lambda(x)=0$. But $\RR^\vee \cap (\ker x)$ is positively spanned by the vectors $e_{i_+} - e_{i_-}, \ i_\pm \in I_\pm \cap I$, where $\{e_i\}$ is the dual basis to the coordinates $[x_0, \dots, x_n]$. It means that there are two pairs $i^-_{1,2} \in {I_-} \cap I$ and $i^+_{1, 2} \in {I_+}\cap I$ such that 
\begin{equation}\label{eq:wedge}
u_{i^-_{1}}- u_{i^+_1} \le 0  \text{ and } u_{i^-_2}- u_{i^+_2} \ge 0.
\end{equation}

Consider the partition of $I$ induced by $\sigma$:
\begin{align}
\sigma_I= &\< \underbrace{I^-_{\min},\dots,  I^-_{\max}}_{I_- \cap I}, \underbrace{I^+_{\min}, \dots, I^+_{\max}}_{I_+ \cap I}\>.
\end{align}
Then given the inequalities \eqref{eq:C_v} for the cone $C_{\sigma_0,\sigma}$, the existence of pairs $i^-_{1,2} \in {I_-} \cap I$ and $i^+_{1, 2} \in {I_+}\cap I$ satisfying \eqref{eq:wedge} becomes equivalent to 
\begin{equation}\label{eq:graph_fibers}
 u_{i^-_{\max}} \ge u_{i^+_{\min}}  \text{ and }   u_{i^+_{\max}} \ge u_{i^-_{\min}}, \ i^\pm_{\min, \max} \in I^\pm_{\min, \max}. 
\end{equation}
Thus, the fiber of $\phi^{-1}(C_{\sigma_0,\sigma})$ over the relative interior of a face $\mathcal R_{I}$  is cut out by the following set of inequalities:
\begin{equation}\label{eq:phi^{-1}(C_v)}
\begin{split}
u_i  =u_{i'} =: u_{I_s}, & \text{ for }  i, i' \in I_s \\
u_{I_1}\le \dots \le u_{I_{r}},  \  u_{I_{r+1}} \le \dots  \le u_{I_k}, &\\
  u_{i^-_{\max}} \ge u_{i^+_{\min}} \text{ and }  u_{i^+_{\max}} \ge u_{i^-_{\min}}, & \text{ where } i^\pm_{\min, \max} \in I^\pm_{\min, \max}.
\end{split}
\end{equation}

Finally, we remark that up to an isomorphism the space $\phi^{-1}(C_{\sigma_0,\sigma})$ does not depend on the choice of the initial subset in $\sigma_0$. Had we chosen $I_+$ instead of $I_-$, the cone $\mathcal R$ would have changed to its opposite. The fibers over the corresponding cones in $\partial \mathcal R$ would remain the same.

Let us return to $\tt\bar\pp^{n-1}$. First, recall the notion of the local fan in a  polyhedral complex. If $v$ is a vertex in a face $F$ of a polyhedral complex $X\subset \R^n$, then the {\bf local cone} $\Sigma_v F$ of $F$ at $v$ is the set of vectors 
\begin{equation}\label{eq:rel_cone}
\{w\in T_v\R^n \suchthat v+\epsilon w \in F,  \text{ for some } \epsilon>0\}
\end{equation}
in the tangent space $T_v\R^n\cong \R^n$. The {\bf local fan} $\Sigma_v X$ of $X$ at $v$ is the union of local cones over all faces $F$ of $X$ containing $v$.

\begin{proof}[Proof of Lemma \ref{lemma:Psi}]
We will show that the local fan $\Sigma_v \bar \Psi_{\sigma, J}$ at any vertex $v$ of $\bar \Psi_{\sigma, J}$ is homeomorphic to the $(k+|J|-4)$-dimensional half-space. First, we consider a maximal case: $J=\hat n$ and $v$ is a vertex of $\bar \Psi_{\sigma}$ which lies over the vertex of  $\pp^{n-1}$. That is, $v=\pp_{\bar n} \times \{a\}$, where $a=\pi_{I_-}=\pi_{I_+}$ (cf. Notations) is a vertex of $\O_{\sigma}$. The corresponding cyclic 2-partition 
$$\sigma_a=\<I_-, I_+\>=\<I_1\cup \dots \cup I_{r}, I_{r+1} \cup \dots \cup I_k \>
$$ 
is a coarsening of $\sigma$. 

The local fan $\Sigma_v \bar \Psi_{\sigma}$ maps to the tropical hyperplane $\pp^{n-1}$. The fiber over the relative interior of a cone $\pp_I$ is non-empty if and only if $a \in \O_{\sigma, I}$, that is, if both sets $I\cap I_\pm$ are non-empty. Thus, the collection of faces $\pp_I$ with non-empty fibers forms a subfan $\pp(a)$ of $\pp^{n-1}$ whose 
face lattice is dual to the face lattice of $\Delta_{I_-} \times \Delta_{I_+}$. In particular the fan $\pp(a)$ is isomorphic to the boundary fan $\partial \mathcal R$ of the polyhedral cone $\mathcal R$ defined in \eqref{eq:convex}.

Next we describe the fiber over the relative interior of $\pp_I$. It is the relative cone of the polytope $\O_{\sigma, I}$ at its vertex $a$. Let $u_i$ be the homogeneous coordinates on the tangent space $T_v \TT^n=\R^{n+1}/\R$ which are parallel to the coordinates $\theta_i$ on $\TT^n$, and let 
$$\sigma_I = \<\underbrace{I^-_{\min},\dots,  I^-_{\max}}_{I\cap I_-}, \underbrace{ I^+_{\min}, \dots, I^+_{\max}}_{I\cap I_-}\>
$$
be the cyclic partition of $I$ induced by $\sigma$. Then the subset of the defining 
inequalities~\eqref{eq:octa_partial} for the polytope $\O_{\sigma, I}$ at $a$ is 
identical to \eqref{eq:phi^{-1}(C_v)}. Thus $\Sigma_v \Psi_{\sigma}$ is homeomorphic to $\phi^{-1}(C_{\sigma_0,\sigma})$, which by Lemma \ref{lemma:convex} is homeomorphic to the $(n+k-3)$-dimensional half-space.

Now we allow $J$ to be a proper subset of $\hat n$, but still consider a vertex $v$ of $\bar \Psi_{\sigma, J}$ which lies over the vertex of the corresponding lower dimensional tropical hyperplane $\pp^{|J|-2}\subset~\Delta_J$. That is, $v=\mathcal P_{J,J} \times \{a\}$, where $a$ is a vertex of $\O_{\sigma,J}$.
Locally near $v$ the space $(\tt\bar \pp^{n-1}) \cap (\Delta_J \times \T^n)$ is the product $\tt \pp^{|J|-2} \times \T^{\hat n \setminus J}$. Moreover, by choosing the splitting $\R^{n+1}/\R = \R^J/\R \times  \R^{\hat n \setminus J}$  such that the vector $\tilde v$ (cf. \eqref{eq:tilde_v}) lies in  $\R^J/\R$  the product structure can be made compatible with the projection by $\tilde v$. Then the projection $\phi: \phi^{-1}(C_{\sigma_a, \sigma}) \to W$ is again a trivial fiber bundle with fibers homeomorphic to $(\R^J/\R) /(\R\tilde v)$ as in the maximal case for lower dimensional pair-of-pants.

Finally, for a non-central vertex $v=\mathcal P_{I,I} \times \{a\}$ of $\Psi_{\sigma, J}$, where $I\subsetneq J$, the local fan $\Sigma_v \Psi_{\sigma, J}$ is just the product $\Sigma_v \Psi_{\sigma, I} \times \R_{\ge 0}^{J\setminus I}$.
\end{proof}

\begin{remark}
Although the polyhedral fans  $\partial \mathcal  R$ and $\pp(v)$  are isomorphic, they are really different fans  in $\R^{n+1}/\R$. The former bounds a convex cone, the latter generally does not.
\end{remark}

For any vertex $a$ of the coamoeba $\cc^n$ one can identify the local fan of $\tt\pp^{n-1}$ at $v=\pp_{\bar n} \times \{a\}$ with the total supporting tangent space $T \mathcal  R$ of the cone $\mathcal R$ associated with the corresponding 2-partition $\sigma_a=\<I_-, I_+\>$, which is homeomorphic to a $(2n-2)$-ball. That directly proves a conjecture of Viro \cite[Section 5.10]{Viro} that $\tt\pp^{n-1}$ is a topological manifold.

\begin{theorem}\label{thm:homeo}
$\tt \bar \pp^{n-1}$ is homeomorphic to $\bar P^{n-1}$, and $\tt \pp^{n-1}$ is homeomorphic to $P^{n-1}$.
\end{theorem}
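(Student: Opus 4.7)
The strategy is to package the previous constructions into the standard fact that two regular CW complexes with isomorphic face posets are homeomorphic; this reduces the theorem to a poset bookkeeping exercise.

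First, by Proposition~\ref{prop:complexCW} the compactified complex pair-of-pants $\bar P^{n-1}$ is a regular CW complex whose open cells are the strata $\Phi_{\sigma,J}$ indexed by pairs $(\sigma,J) \in \mathcal{W}$. By Proposition~\ref{prop:tropicalCW} the compactified phase tropical pair-of-pants $\tt\bar\pp^{n-1}$ is also a regular CW complex whose cells are the strata $\bar\Psi_{\sigma,J}$ with the same index set $\mathcal{W}$ and with the same cell dimensions $\rk(\sigma,J)$. I would first check that the two face posets coincide. For $\bar P^{n-1}$ the closure order on cells is precisely $\preceq$ by Proposition~\ref{prop:boundary}. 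For $\tt\bar\pp^{n-1}$, both stratifications are induced by the ambient $(\sigma,J)$-stratification of $\Delta \times \TT^n$, so one direction, $(\sigma',J') \preceq (\sigma,J) \Rightarrow \bar\Psi_{\sigma',J'}\subseteq \bar\Psi_{\sigma,J}$, is immediate from the explicit description $\bar\Psi_{\sigma,J} = \bigcup_{(I,I',\tau)}\pp_{I,I'}\times\am_\tau$ (every admissible triple for $(\sigma',J')$ remains admissible for $(\sigma,J)$). The reverse direction is forced by the fact that $\Psi_{\sigma',J'}$ is a single stratum lying in $\Delta_{J'}\times \TT^n_{\sigma'}$, so its closure can only sit inside $\bar\Psi_{\sigma,J}$ when $(\sigma',J') \preceq (\sigma,J)$.

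With the face posets identified, I would invoke the classical result (see, e.g., \cite{LW}) that a regular CW complex is canonically homeomorphic to the geometric realization of the order complex of its face poset; consequently, any two regular CW complexes with isomorphic face posets are homeomorphic. Applied to $\bar P^{n-1}$ and $\tt\bar\pp^{n-1}$ with the identity map $(\sigma,J)\mapsto(\sigma,J)$ on $\mathcal{W}$, this yields a homeomorphism $\tt\bar\pp^{n-1}\approx \bar P^{n-1}$.

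To deduce the uncompactified statement, observe that points of $P^{n-1}$ are exactly those in the interior of $\Delta\times\TT^n$, i.e.\ those covered by cells with $J=\hat n$, and similarly for $\tt\pp^{n-1}$. Thus the boundary pieces $\bigcup_{J\subsetneq\hat n}\Phi_{\sigma,J}$ and $\bigcup_{J\subsetneq\hat n}\Psi_{\sigma,J}$ form subcomplexes of $\bar P^{n-1}$ and $\tt\bar\pp^{n-1}$ respectively, and the homeomorphism from the previous step, being cellular and poset-preserving, identifies these subcomplexes. Removing them yields $\tt\pp^{n-1}\approx P^{n-1}$.

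The main obstacle has already been absorbed into the earlier sections: establishing that the closed strata on both sides are balls (Propositions~\ref{prop:the_ball} and~\ref{prop:tropicalCW}), so that we genuinely have \emph{regular} CW structures. Given those, the remaining work is purely combinatorial, and I do not anticipate additional technical difficulty in the poset comparison or in the restriction to the open subcomplexes.
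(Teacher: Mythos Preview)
Your proposal is correct and follows essentially the same approach as the paper: invoke Propositions~\ref{prop:complexCW} and~\ref{prop:tropicalCW} to get regular CW structures on both spaces with the common face poset $\mathcal{W}$, then use the standard fact that regular CW complexes with isomorphic face posets are homeomorphic (the paper cites \cite{Bjorner} rather than \cite{LW}), and finally pass to interiors. Your write-up simply makes explicit the poset comparison and the boundary-subcomplex argument that the paper compresses into the single sentence ``And so are their interiors.''
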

\begin{proof}
Two isomorphic regular CW complexes are homeomorphic (cf, e.g. \cite {Bjorner}). And so are their interiors.
\end{proof}

\begin{remark}
It may be desirable to extend the result to a homeomorphism of pairs 
$$(P^{n-1}, (\C^*)^{n+1}/\C) \approx (\tt \pp^{n-1}, (\C^*)^{n+1}/\C).$$ 
Indeed, the respective complements are the higher dimensional pairs-of-pants themselves. The problem, however, is that even the local homeomorphism $\Sigma_v \tt\pp^{n-1}\approx \R^{2n-2}$ given by \eqref{eq:homeo} is fairly complicated and it does not seem to have an obvious extension to a tubular neighborhood.
\end{remark}

\section{Phase tropical varieties of hypersurfaces in $(\mathbb{C}^*)^n$}
\label{sec:H}

\subsection{Phase tropical varieties} In this section, we will give the necessary background to state our main result, Theorem~\ref{thm:maintheorem}. We begin with some preliminary definitions which may be found in \cite{GKZ}.

Let $N \cong \mathbb{Z}^n$, $M = \textnormal{Hom} (N, \mathbb{Z})$ and, for any abelian group $\mathbb{K}$ write $M_\mathbb{K}$ for $M \otimes_{\mathbb{Z}} \mathbb{K}$ and similarly for $N$. Let $A$ be a finite subset of $M$ and $Q$ its convex hull in $M_\mathbb{R}$. We call $(Q, A)$ a \textbf{marked polytope}. Another marked polytope $(Q^\prime, A^\prime)$ will be called a \textbf{face} of $(Q,A)$ if $Q^\prime$ is a face of $Q$ and $A^\prime = A \cap Q^\prime$. If $A$ is affinely independent, we say it is a \textbf{marked simplex}. A subdivision $\mathcal{S} = \{(Q_\gamma, A_\gamma) : \gamma \in \Gamma\}$ of $(Q,A)$ is a collection of marked polytopes satisfying:
\begin{enumerate}
	\item for any $\gamma \in \Gamma$, every face of $(Q_\gamma, A_\gamma)$ is in $\mathcal{S}$,
	\item for any $\gamma, \tilde{\gamma} \in \Gamma$, the intersection $(Q_\gamma \cap Q_{\tilde{\gamma}} , A_\gamma \cap A_{\tilde{\gamma}})$ is in $\mathcal{S}$ and is a face of both $(Q_\gamma, A_\gamma)$ and $(Q_{\tilde{\gamma}} , A_{\tilde{\gamma}})$,
	\item the union $\cup_{\gamma \in \Gamma} Q_\gamma$ equals $Q$.
\end{enumerate}
The poset $\Gamma$ is the face lattice of the subdivision. Note that it is not necessarily the case that $\cup_{\gamma \in \Gamma} A_\gamma = A$. 

We will be particularly interested in subdivisions that are \textbf{coherent}. The basic ingredient in this construction is a function $\eta : A \to \mathbb{R}$. 
From $\eta$, we  define a polyhedron in $M_\mathbb{R} \times \mathbb{R}$ as the convex hull 
\begin{align}
\label{eq:polyhedron} \bar{Q}_\eta := \textnormal{Conv} \left\{ (\alpha, r) \in A \times \mathbb{R} : r \geq \eta (\alpha )  \right\}.
\end{align}
Let $\bar A_{\bar F}$ be the set of vertices over $A$ of any compact face $\bar F$ of $\bar{Q}_\eta$, and take $A_F, F$ to be their projections onto $M_\R$. Define the subdivision $\mathcal{S}_\eta = \{(F, A_F) : \bar{F} \text{ a compact face of } \bar{Q}_\eta\}$. When each $(F, A_F)$ is a marked simplex, we say that $\eta$ induces a \textbf{coherent triangulation} of $(Q, A)$ (see \cite[Chapter~7]{GKZ}). 

Consider the piecewise linear function $F_\eta : N_\mathbb{R} \to \mathbb{R}$ (or tropical polynomial) defined by
\begin{align}
F_\eta (\mathbf{x})  = \max \{ \alpha (\mathbf{x}) - \eta (\alpha ) : \alpha \in A \}.
\end{align}
The \textbf{tropical hypersurface} $\tropvar{\eta} \subset N_\mathbb{R}$ is the corner locus of $F_\eta$, see, e.g. \cite{MS} for details.
Note that $\tropvar{\eta}$ is a polyhedral complex. For any $k \in \mathbb{Z}$, let $\mathcal{S}_\eta^{\geq k}$ be the set of faces $(Q_\gamma, A_\gamma)$ for which $\dim Q_\gamma \geq k$. There is an order reversing bijection 
\begin{equation}
\label{diag:commtropcomp} 
\begin{tikzpicture}[baseline=(current  bounding  box.center), node distance=3cm, auto]
\node (A) {$\mathcal{S}_\eta^{\geq 1}$};
\node (B) [right of=A]{$\left\{ \text{Faces of }\tropvar{\eta} \right\}$}; 
\path[->,font=\scriptsize]
(A) edge node[above] {$\Phi$} (B);
\end{tikzpicture}
\end{equation}
where $\Phi (Q_\gamma, A_\gamma) = \{\mathbf{x} \in N_\mathbb{R}: \alpha (\mathbf{x}) - \eta (\alpha) = F_\eta (\mathbf{x}) , \text{ for all } \alpha \in A_\gamma\}$. For any $(Q_\gamma, A_\gamma) \in \mathcal{S}_\eta^{\ge 1}$, write $\tropstrat{\eta}{\gamma}$ for the relative interior of $\Phi (Q_\gamma, A_\gamma )$ so that
\begin{align}
\tropvar{\eta} = \bigcup_{\gamma \in \Gamma} \tropstrat{\eta}{\gamma}.
\end{align}
\begin{figure}[h]
	\includegraphics[scale=.8]{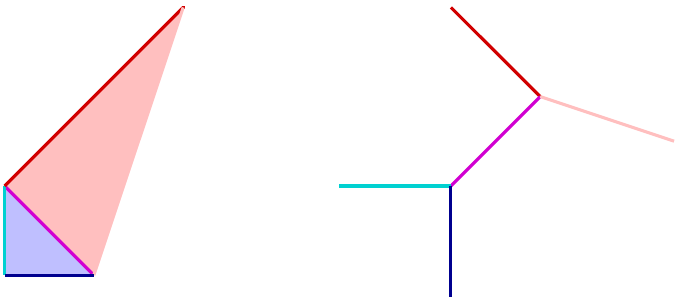}
	\caption{\label{fig:triangulation} A triangulation and tropical hypersurface induced by $\eta$}
\end{figure}
\begin{example} \label{example:defs}
	Consider an example of $A = \{(0,0), (1,0), (0, 1), (2, 3)\}$, and $Q$ its convex hull. Take $\eta : A \to \mathbb{R}$ equal to 0 except at $(2,3)$, where it equals 1. That gives the coherent triangulation and tropical hypersurface illustrated in Figure~\ref{fig:triangulation}. The correspondence $\Phi$ is indicated by the coloring of the simplices in the triangulation and the faces of the tropical hypersurface.
\end{example}

Turning to complex polynomials, given a Laurent polynomial $f = \sum_{m \in \mathcal{M}} c_m z^m$ where $\mathcal{M}$ is a finite set in $M$ and $c_m \neq 0$ for all $m \in \mathcal{M}$,  we say that $(\chull (\mathcal{M}), \mathcal{M})$ is the \textbf{marked Newton polytope} of $f$. Given any $A \subseteq \mathcal{M}$, the restriction of $f$ to $A$ is the polynomial 
\begin{align}
f_{A} = \sum_{a \in A} c_a z^a. 
\end{align}

\begin{figure}[h]
	\includegraphics[scale=.3]{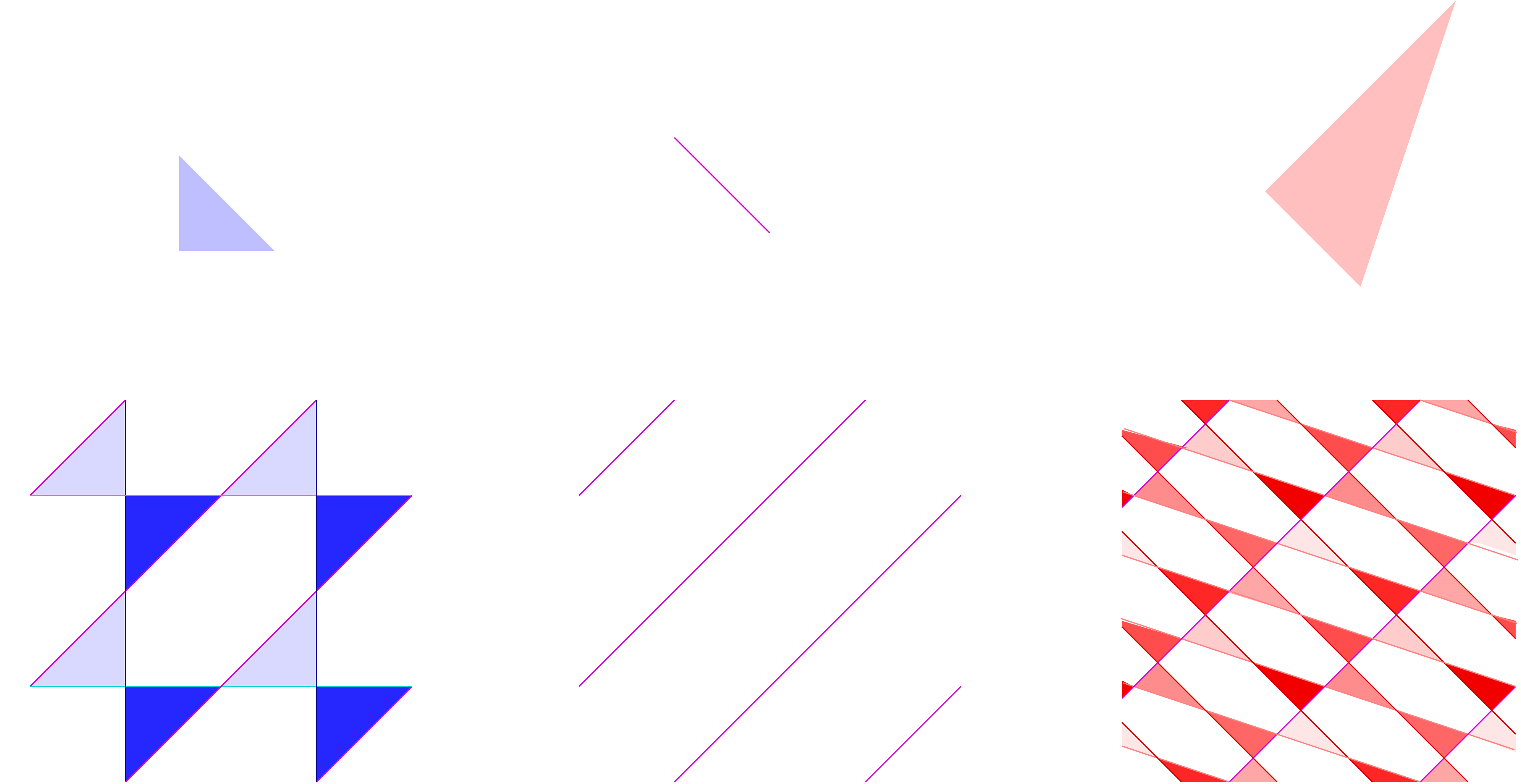}
	\caption{\label{fig:coamoebaG}Coamoebas associated to simplices.}
\end{figure}
Given a Laurent polynomial $f$, its zero locus $\comhyp{f}$ lives in the complex torus $N_{\mathbb{C}^*}$. In the notation from \ref{sec:coamoeba} we have the argument map $\Arg : N_{\mathbb{C}^*} \to N_{\mathbb{T}}$ to the real $n$-torus $N_\mathbb{T}$. For any polynomial $f$, we define its \textbf{coamoeba} $\coam{f} \subset N_\mathbb{T}$ to be the closure of the image of $\comhyp{f}$ under the argument map. 

\begin{example} \label{example:coams}
	As will be shown in the next subsection, the coamoeba of a simplex is a finite cover of the product of the coamoeba of a pair-of-pants and a torus. Take $f$ to be a generic polynomial with marked Newton polytope $(A,Q)$ from Example~\ref{example:defs}. Consider the restriction of $f$ to three simplices as indicated in Figure~\ref{fig:coamoebaG}. The corresponding coamoebas in the cover $N_\mathbb{R}$ of $N_\mathbb{T}$ are illustrated. 
\end{example}
\begin{definition} \label{def:phasetropical} 
	Let $f \in \mathbb{C}[M]$ and $(Q,A)$ be its marked Newton polytope. Given a coherent triangulation $\mathcal{S}_\eta = \{(Q_\gamma , A_\gamma ): \gamma \in \Gamma\}$ induced by $\eta : A \to \mathbb{R}$,  the \textbf{phase tropical hypersurface} of $f$ defined by $\eta$ is
	\begin{align*}
	\tphyp{\eta}{f} := \bigcup_{\gamma \in \Gamma} \tropstrat{\eta}{\gamma} \times \coam{f_{A_\gamma}} \subset N_\mathbb{R} \times N_\mathbb{T} = N_{\mathbb{C}^*} .
	\end{align*}
\end{definition}
\begin{example} \label{example:pthyp} Combining Examples \ref{example:defs} and \ref{example:coams} we obtain a partial picture of a phase tropical hypersurface $\tphyp{\eta}{f}$ illustrated in Figure~\ref{fig:ptvar}. Here we have not illustrated the coamoebas over the non-compact faces of the tropical hypersurface.
\end{example}
\begin{figure}[h]
	\includegraphics[scale=.3]{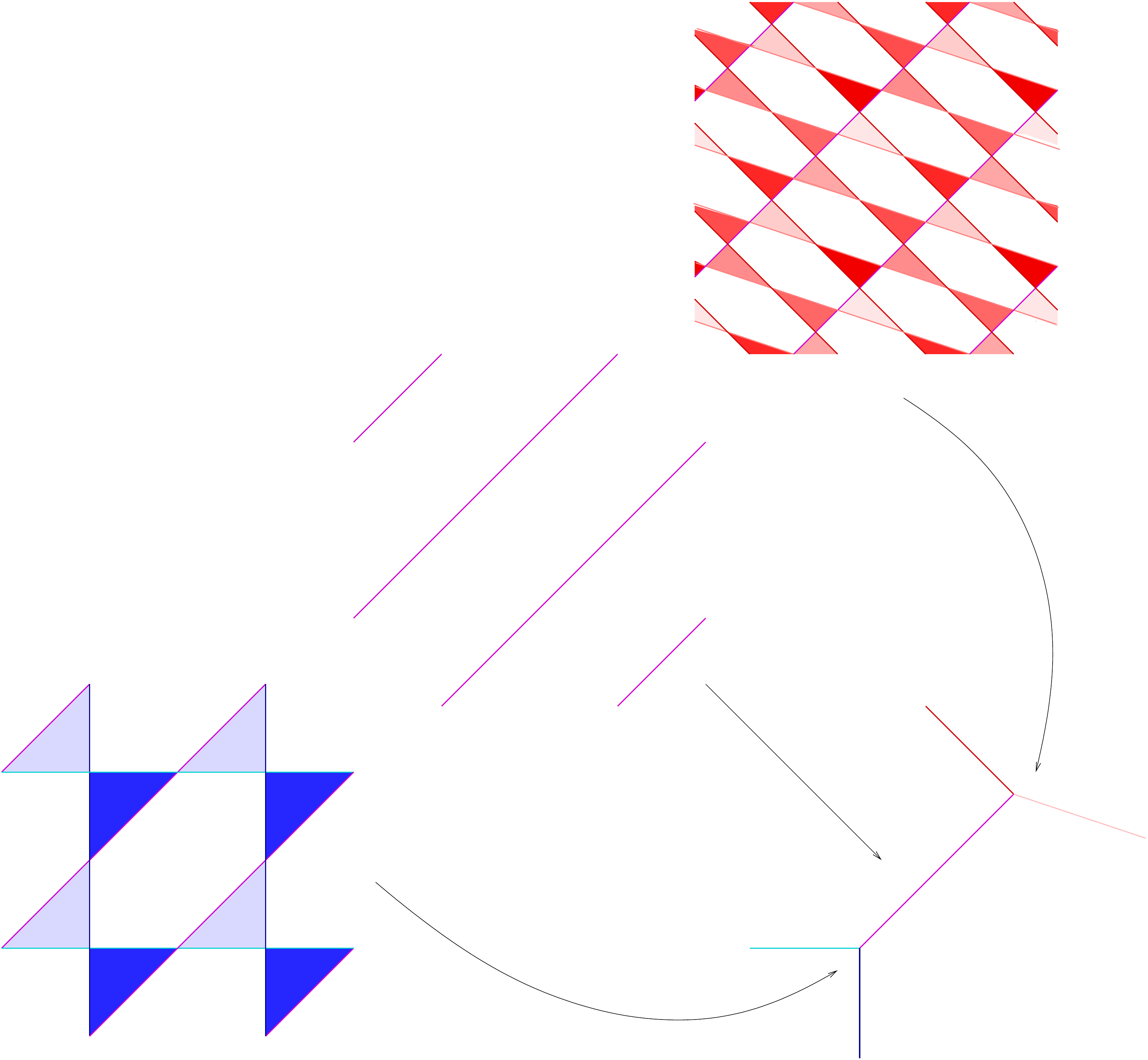}
	\caption{\label{fig:ptvar} The phase tropical hypersurface.}
\end{figure} 

We will also consider a compactified version of the phase tropical  hypersurface. For this, we identify $N_\mathbb{R} \times N_\mathbb{T}$ with $N_{\mathbb{C}^*}$ via the exponential map and consider the algebraic moment map 
$
\mu : N_{\mathbb{C}^*} \to M_\mathbb{R}
$
defined as 
\begin{align*}
\mu (z) = \frac{1}{\sum_{a \in A} | z^a |} \sum_{a \in A} |z^a | a.
\end{align*}
It is clear that $\mu$ is independent of the $N_\mathbb{T}$ factor and, for any $\theta \in N_\mathbb{T}$, we denote the restriction of $\mu$ to $N_\mathbb{R} \times \{\theta\}$ by $\mu_\mathbb{R} : N_\mathbb{R} \to M_\mathbb{R}$. It is also easy to observe that the image of $\mu$ is the relative interior of $Q$. If we wish to reference the marked polytope in the notation, we write $\mu^{A}$ for $\mu$ and $\mu_\mathbb{R}^A$ for $\mu_\mathbb{R}$.
\begin{definition} \label{def:compactphasetropical} Let $Q$ be an $n$-dimensional polytope. The \textbf{compactified phase tropical hypersurface} $\ctphyp{\eta}{f}$ is the closure
	\begin{align*}
	\overline{\left(\mu_\mathbb{R} \times Id \right)(\tphyp{\eta}{f} )} \subset Q \times N_\mathbb{T} 
	\end{align*}
	of the phase tropical  hypersurface in $Q \times N_\mathbb{T}$.
\end{definition}
Of course, we may also compactify the complex hypersurface $\comhyp{f} \subset N_{\mathbb{C}^*}$ by taking its closure under the image of the moment map. When the Newton polytope of $f$ is $n$-dimensional, we call 
\begin{align}
\ccomhyp{f} = \overline{\mu (\comhyp{f})} \subset Q \times N_\mathbb{T}
\end{align}
the compactification of $\comhyp{f}$. 

The boundaries of $\ccomhyp{f}$ and $\ctphyp{\eta}{f}$ both have stratifications indexed by the face lattice of $Q$. It will be helpful later on to describe these boundary strata intrinsically. Suppose $(Q,A)$ is a marked polytope in $M$, not necessarily full dimensional, $f$ a polynomial whose marked Newton polytope contains $(Q,A)$ and $\eta$ is any function on $A$.  Let 
$$N(A) := \{\mathbf{x} \in N  :  a (\mathbf{x}) = a^\prime (\mathbf{x}) \text{ for all }a, a^\prime \in A\}
$$ 
be the sublattice orthogonal to the affine span of $A$, and $N_\mathbb{K} (A) = N (A) \otimes_\mathbb{Z} \mathbb{K}$. Then it is easy to see that the tropical hypersurface $\tropvar{\eta} \subset N_\mathbb{R}$ is invariant under translations by $N_\mathbb{R} (A)$. Define the space
\begin{align}
\tropstrats{\eta}{f}{A} = \left\{ (\mathbf{x} + N_\mathbb{R} (A), \mathbf{\theta}) \in \frac{N_\mathbb{R}}{N_\mathbb{R} (A)}  \times N_\mathbb{T} : (\mathbf{x}, \mathbf{\theta}) \in \tphyp{\eta}{f_A} \right\}.
\end{align}
We may compactify $\tropstrats{\eta}{f}{A}$ by using the moment map $\mu^A$ associated to $A$. More explicitly, let $\iota_\mathbb{R} : N_\mathbb{R} / N_\mathbb{R} (A) \to N_\mathbb{R}$ be any section of the quotient map. Then define
\begin{align}
\tilde{\mu}^A_\mathbb{R} : \frac{N_{\mathbb{R}}}{N_{\mathbb{R}} (A) } \to Q
\end{align}
to be the composition $\mu^A_\mathbb{R} \circ \iota_\mathbb{R}$. As $\mu^A_\mathbb{R}$ is invariant with respect to translations by $N(A)$, it is clear that $\tilde{\mu}^A_\mathbb{R}$ is independent of the choice of $\iota$. Note that the image of $\tilde{\mu}^A_\mathbb{R}$ is the relative interior of $Q$. For the compactification of $\tropstrats{\eta}{f}{A}$ we take 
\begin{align}
\ctropstrats{\eta}{f}{A} = \overline{(\tilde{\mu}^A_\mathbb{R} \times Id) (\tropstrats{\eta}{f}{A})}.
\end{align}

One may relate $\ctropstrats{\eta}{f}{A}$ to the lower dimensional phase tropical hypersurface associated to $f$. To do this, let $M (A)$ be the saturation of the affine lattice spanned by $A$, $(\tilde{Q}, \tilde{A})$ the image of $A$ in $M (A)$ and $\tilde{\eta} : \tilde{A} \to \mathbb{R}$ the function equal to $\eta$, then $\ctropstrats{\eta}{f}{A}$ is homeomorphic to $\ctphyp{\tilde{\eta}}{f} \times N_\mathbb{T} (A)$. To define a homeomorphism, one can  use a lift $\iota_\mathbb{R} : N_\mathbb{R} / N_\mathbb{R} (A) \to N_\mathbb{R}$ to equate the argument factor of $\tropstrats{\eta}{f}{A}$ with $N_{\mathbb{T}} / N_\mathbb{T} (A) \times N_\mathbb{T} (A)$.
	
If $(Q^\prime, A^\prime)$ is a face of $(Q, A)$ where the marked Newton polytope of $f$ contains $(Q,A)$, then there is a natural inclusion
\begin{align} \label{eq:ptstrat_maps}
i_{A^\prime, A} : \ctropstrats{\eta}{f}{A^\prime} \to \ctropstrats{\eta}{f}{A}
\end{align}
whose image is the inverse image of $Q^\prime$ in $Q \times N_\mathbb{T}$ under the moment map. To define this map, it suffices to consider the non-compact strata. But there is already a map $ (\tilde{\mu}_\mathbb{R}^{A^\prime} \times Id) : \tropstrats{\eta}{f}{A^\prime} \to Q^\prime \times N_\mathbb{T} \subset Q \times N_\mathbb{T}$ and this maps bijectively onto  $\ctropstrats{\eta}{f}{A}$ over the relative interior of $Q^\prime$. 

Thus we obtain a stratification of the compactified phase tropical hypersurface 
\begin{align} \label{eq:pt_strat}
\ctphyp{\eta}{f} = \bigcup_{(Q^\prime, A^\prime) \text{ a face of } (Q, A)} \tropstrats{\eta}{f}{A^\prime}
\end{align}
whose strata are in bijective correspondence with the positive dimensional faces of $Q$. 

Turning to the geometry of complex hypersurfaces, we note that their tropical compactifications also carry a stratification by the boundary faces of $Q$. Here, assume $f$ is a Laurent polynomial with marked Newton polytope containing the face $(Q,A )$. We denote by $\comstrat{f}{A}$ the quotient of the hypersurface $\comhyp{f_{A}}$ in $N_{\mathbb{C}^*} = N_\mathbb{R} \times N_\mathbb{T}$ by $N_\mathbb{R} (A)$ (note that $f_{A}$ is homogeneous with respect to this action, so that the quotient is well defined). The space $\comstrat{f}{A}$ is not compact, but again we may compactify by taking its closure under the restricted moment map
\begin{align}
\ccomstrat{f}{A} = \overline{(\tilde{\mu}^A_\mathbb{R} \times Id) (\comstrat{f}{A})}.
\end{align}

As in the phase tropical setting, for a face $(Q^\prime, A^\prime)$ of $(Q, A)$, and marked Newton polytope of $f$ containing $(Q,A)$, there are natural maps 
\begin{align} \label{eq:comstrat_maps}
j_{A^\prime, A} : \ccomstrat{f}{A^\prime}\to \ccomstrat{f}{A}
\end{align}
defined analogously to those in \eqref{eq:ptstrat_maps}. The associated stratification is then
\begin{align} \label{eq:com_strat}
\ccomhyp{f} = \bigcup_{(Q^\prime, A^\prime) \text{ a face of } (Q, A)} \comstrat{f}{A^\prime}.
\end{align}

Even for smooth hypersurfaces $\comhyp{f}$, the tropical compactification may contain unwanted singularities on the boundary strata. To prevent such singularities, we call a Laurent polynomial $f$ \textbf{non-degenerate} if $f_{A'}$ is regular at $0$ for all faces $(Q^\prime, A^\prime)$ of $(Q,A)$.

\begin{theorem} \label{thm:maintheorem} Given a non-degenerate polynomial $f$ with marked Newton polytope $(Q,A)$ and $\eta : A \to \mathbb{R}$ defining a coherent triangulation, there are homeomorphisms $\psi$, $\bar{\psi}$ for which the diagram
	\begin{equation}
	\label{diag:commtropcomp2} 
	\begin{tikzpicture}[baseline=(current  bounding  box.center), node distance=2cm, auto]
	\node (A) {$\comhyp{f}$};
	\node (B) [right of=A]{$\tphyp{\eta}{f}$}; 
	\node (C) [below of=A]{$\ccomhyp{f}$};
	\node (D) [right of=C]{$\ctphyp{\eta}{f}$}; 
	\path[->,font=\scriptsize]
	(A) edge node[above] {$\psi$} (B)
	(C) edge node[below] {$\bar{\psi}$} (D);
	\path[right hook->,font=\scriptsize]
	(A) edge node[above] {} (C)
	(B) edge node[below] {} (D);
	\end{tikzpicture}
	\end{equation}
	commutes.
\end{theorem}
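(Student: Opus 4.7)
The plan is to reduce to the pair-of-pants homeomorphism of Theorem~\ref{thm:homeo} by decomposing both sides of the diagram~\eqref{diag:commtropcomp2} according to the triangulation $\mathcal{S}_\eta$. On the phase tropical side this decomposition into strata $\ctropstrats{\eta}{f}{A_\gamma}$ glued along the structure maps $i_{A^\prime, A}$ of \eqref{eq:ptstrat_maps} is built into Definition~\ref{def:phasetropical} and \eqref{eq:pt_strat}. The complex side requires an appeal to Viro's patchworking in order to manufacture an analogous decomposition of $\ccomhyp{f}$ whose pieces are $\ccomstrat{f}{A_\gamma}$ glued along the maps $j_{A^\prime, A}$ of \eqref{eq:comstrat_maps}.

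First, I would introduce the rescaled family $f_t(z) = \sum_{a\in A} c_a t^{-\eta(a)} z^a$ with $f_1 = f$, and observe that non-degeneracy persists along $t\in(0,1]$, so that $\ccomhyp{f_t}$ is ambient isotopic to $\ccomhyp{f}$. For $t$ sufficiently small, Viro's patchworking theorem \cite{Viro83} supplies an ambient isotopy identifying $\ccomhyp{f_t}$ with a union of closed pieces indexed by the simplices of $\mathcal{S}_\eta$: the piece over a closed neighborhood of $\tropstrat{\eta}{\gamma}$ is a tubular neighborhood of the ``truncated'' stratum $\ccomstrat{f}{A_\gamma}$, and the pieces are glued along their lower-dimensional boundary strata via the maps $j_{A^\prime, A}$. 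This is precisely the non-unimodular analogue of Mikhalkin's pair-of-pants decomposition \cite{PP}.

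Second, for each simplex $(Q_\gamma, A_\gamma)$ let $d_\gamma = \dim Q_\gamma$ and let $G_\gamma = \Hom(M(A_\gamma)/\Z\langle A_\gamma - a_{\gamma,0}\rangle, \C^*)$ be the deck group of the covering of $\tilde{A}_\gamma \subset M(A_\gamma)$ by $A_\gamma$. After quotienting the ambient torus by $N(A_\gamma)$ and lifting through the $G_\gamma$-cover, both $\ccomstrat{f}{A_\gamma}$ and $\ctropstrats{\eta}{f}{A_\gamma}$ are finite abelian covers of the standard objects $\bar P^{d_\gamma - 1}$ and $\tt\bar\pp^{d_\gamma - 1}$. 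Because the regular CW-decompositions in Propositions~\ref{prop:complexCW} and~\ref{prop:tropicalCW} are built from symmetric data (octahedra, alcoves, and polygon configurations) invariant under the coordinate torus, they descend compatibly under the $G_\gamma$-quotient, and Theorem~\ref{thm:homeo} yields an equivariant homeomorphism $\bar\psi_\gamma : \ccomstrat{f}{A_\gamma} \to \ctropstrats{\eta}{f}{A_\gamma}$.

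The main obstacle is to glue the local $\bar\psi_\gamma$ into a single $\bar\psi$ for which \eqref{diag:commtropcomp2} commutes, and the plan is to do this by induction on $d_\gamma$. At each stage, one fixes $\bar\psi_\gamma$ on $\partial \ccomstrat{f}{A_\gamma}$ using the already-chosen $\bar\psi_{\gamma^\prime}$ for proper faces $(Q_{\gamma^\prime}, A_{\gamma^\prime})$ of $(Q_\gamma, A_\gamma)$, intertwined with $j_{A^\prime, A}$ and $i_{A^\prime, A}$; one then extends across the interior. This extension is feasible because Propositions~\ref{prop:complexCW} and~\ref{prop:tropicalCW} identify both sides with regular CW-balls whose boundary CW-structures are canonically isomorphic, so extending a prescribed boundary homeomorphism amounts to an Alexander-trick argument (equivalently, to extending an isomorphism of boundary regular CW-complexes to their filled-in balls). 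Removing the boundary strata then produces $\psi$ and gives commutativity of the square. The delicate point is that Theorem~\ref{thm:homeo} is not canonical — it is only \emph{some} homeomorphism — so the inductive construction must absorb whatever boundary reparametrization is needed at each step; controlling this parametrization through the patchworking limit, where the complex pieces are only approximately the model $\ccomstrat{f}{A_\gamma}$'s, is the main technical content.
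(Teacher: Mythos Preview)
Your overall strategy---decompose both $\ccomhyp{f}$ and $\ctphyp{\eta}{f}$ along the triangulation $\mathcal S_\eta$, identify the pieces via Theorem~\ref{thm:homeo}, and glue---is the paper's strategy as well. But two of your steps are not quite right, and the second one is a genuine gap.

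First, the ``equivariant homeomorphism'' formulation is backwards. The deck group $G_\gamma$ acts on the covers $\ccomstrat{f}{A_\gamma}$ and $\ctropstrats{\eta}{f}{A_\gamma}$, not on the bases $\bar P^{d_\gamma-1}$ and $\tt\bar\pp^{d_\gamma-1}$; there is no torus action on the base for the CW structure to be invariant under. What is actually needed is that the base homeomorphism of Theorem~\ref{thm:homeo} respects the covering data, i.e.\ carries the subgroup of $\pi_1$ determining one cover to the subgroup determining the other. The paper handles this (Theorem~\ref{theorem:homeo_simple_hypersurfaces}) by checking that both covers are pulled back from the ambient cover $\phi_B:N_{\C^*}\to(\C^*)^{n+1}/\C^*$ and that the induced map on $H_1$ commutes with the inclusions into the ambient torus.

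Second, and more seriously, your inductive gluing step fails as written. You claim that ``Propositions~\ref{prop:complexCW} and~\ref{prop:tropicalCW} identify both sides with regular CW-balls'', but those propositions say only that $\bar P^{n-1}$ and $\tt\bar\pp^{n-1}$ are regular CW-\emph{complexes}; the individual cells are balls, but the strata $\ccomstrat{f}{A_\gamma}$ and $\ctropstrats{\eta}{f}{A_\gamma}$ themselves are not (they are finite covers of compactified pairs-of-pants and have nontrivial topology in general). So the Alexander trick is unavailable, and there is no general mechanism to extend an arbitrary prescribed boundary homeomorphism across the interior of such a piece. The paper avoids this problem entirely: rather than absorbing boundary reparametrizations inductively, it proves (Corollary~\ref{cor:strat_homeo}) that the piecewise homeomorphisms $\bar\psi_{A_\gamma}$ already commute with the structure maps $j_{A',A}$ and $i_{A',A}$, i.e.\ they assemble into a natural isomorphism of functors $\Gamma\to\mathrm{Top}$. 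The homeomorphism of colimits \eqref{eq:complex_limit}, \eqref{eq:phase_limit} then follows without any extension argument. That compatibility is the content you are missing, and it is not automatic---it comes from constructing all the $\bar\psi_{A_\gamma}$ as lifts through a single ambient covering map.
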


\subsection{Simple hypersurfaces} \label{sec:simplehyp} Often when considering pair-of-pants decompositions induced by a coherent triangulation $\mathcal{S} = \{(Q_\gamma, A_\gamma): \gamma \in \Gamma\}$ of a marked polytope $(Q,A)$ (e.g. as in \cite{PP}), the simplices $(Q_\gamma, A_\gamma)$ are required to have volume $\frac{1}{n!}$ (or normalized volume $1$). Such triangulations are referred to as maximal or unimodular. In practice, unimodular triangulations are comparatively rare and there will usually be several simplices in any given triangulation with larger volume. Indeed, there are many cases of marked polytopes without a single unimodular triangulation. For a simplex of volume greater than $\frac{1}{n!}$, the associated phase tropical hypersurface is no longer a pair-of-pants, but rather a finite cover of the pair-of-pants, called a \text{simple hypersurface} in \cite{NS}. Following loc. cit., we take a moment to consider this cover and that of the associated hypersurface $\comhyp{f}$.

First, let us establish some notation. Let $(Q, B)$ be a marked simplex in $M \cong \mathbb{Z}^n$ for which $B$ affinely spans $M_\mathbb{R}$ and 
\begin{align*}
f = \sum_{b \in B} c_b z^b \in \mathbb{C}[M]
\end{align*}
with $c_b \ne 0$ for every $b \in B$.  Fix an ordering of $B = \{b_0, \ldots, b_n\}$, write $c_i$ for $c_{b_i}$ and identify any subset $I \subseteq \hat{n}$ with its corresponding subset $\{b_i: i \in I\} \subseteq B$. Consider the map $\phi_B : N_{\mathbb{C}^*} \to \left( \mathbb{C}^* \right)^{n + 1}/ \mathbb{C}^* \subset \mathbb{P}^n$ defined by
\begin{align}
\phi_B (z) := [c_0 z^{b_0}, \cdots , c_n z^{b_n}]. 
\end{align}
One notes that $\phi_B$ extends via the inverse of the moment map to $\bar{\phi}_B : Q \times N_\mathbb{T} \to \Delta \times \mathbb{T}^n$ where $\Delta$ is the standard simplex.

Write $\Xi_B \subseteq M$ for the sublattice which is the $\mathbb{Z}$-span of $\{b_i - b_j : b_i, b_j \in B \}$. Then there are containments $N \subseteq \Xi_B^\vee \subset N_\mathbb{R}$ and we write $\Lambda_B$ for the image of $\Xi_B^\vee$ in the quotient $N_\mathbb{T} = N_\mathbb{R} / N$. Then, using notation from Section~\ref{sec:coamoeba}, we have the following basic result.

\begin{lemma} \label{lemma:complexcover}
	The maps $\phi_B$ and $\bar{\phi}_B$ are quotient maps by $\Lambda_B$. Furthermore, for any subset $I \subseteq \{0,\dots, n\}$ with $|I| \geq 2$, the restriction of $\phi_B$ to  $\comhyp{f_I}$ is a covering map to the complex pair-of-pants $P^{n - 1}_I$.
\end{lemma}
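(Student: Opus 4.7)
The plan is to recognize $\phi_B$ as a surjective homomorphism of algebraic tori and read off its kernel from the induced map on characters. The character lattice of $(\C^*)^{n+1}/\C^*$ is $\Xi=\{(a_0,\dots,a_n)\in\Z^{n+1}:\sum a_i=0\}$, and the character indexed by $(a_0,\dots,a_n)$ pulls back under $\phi_B$ to $\sum_i a_i b_i\in M$ (up to the nonzero scalar $\prod c_i^{a_i}$, invisible at the character level). Hence the pullback $\Xi\to M$ has image $\Xi_B$. Since $B$ affinely spans $M_\R$, $\Xi_B$ has full rank $n$, and a surjection between free abelian groups of equal rank is automatically an isomorphism, so $\Xi\to\Xi_B$ is an iso. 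Consequently $\phi_B$ is surjective with finite kernel $\Hom(M/\Xi_B,\C^*)$; being finite, this kernel lies inside $N_\T\subset N_{\C^*}$ and is identified by Pontryagin duality with $\Xi_B^\vee/N=\Lambda_B$. A continuous surjective homomorphism of locally compact groups with finite kernel is a regular covering, so $\phi_B$ is the quotient map by $\Lambda_B$.

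To extend to $\bar\phi_B$, observe that $B$ is affinely independent (being a marked simplex), so $Q$ is itself an $n$-simplex. In the coordinates supplied by the two moment maps, $\bar\phi_B$ factors as the product of a homeomorphism $Q\to\Delta$ (a projective transformation sending $b_i\mapsto e_i$, induced by $s_i=|c_i|t_i/\sum_j|c_j|t_j$ in barycentric coordinates) with the torus covering $N_\T\to\T^n$ of kernel $\Lambda_B$. Hence $\bar\phi_B$ is still a regular $\Lambda_B$-covering, in particular a quotient map.

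For the restriction statement, every $\lambda\in\Lambda_B$ satisfies $\lambda^{b_i-b_j}=1$ for all $i,j\in\hat n$, so $\lambda^{b_i}$ takes a single value $\lambda^{b_I}$ for $i\in I$, giving
\[
f_I(\lambda z)=\sum_{i\in I}c_i(\lambda z)^{b_i}=\lambda^{b_I}\sum_{i\in I}c_iz^{b_i}=\lambda^{b_I}f_I(z).
\]
Thus $\Lambda_B$ preserves $\comhyp{f_I}$. The $I$-coordinates of $\phi_B(z)$ sum to $f_I(z)$, so $\phi_B$ maps $\comhyp{f_I}$ into $P^{n-1}_I$, and any $\phi_B$-preimage of a point of $P^{n-1}_I$ must satisfy $f_I(z)=0$. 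Since the global $\Lambda_B$-covering $\phi_B$ restricts to a map between $\Lambda_B$-invariant subvarieties, the restriction $\phi_B|_{\comhyp{f_I}}:\comhyp{f_I}\to P^{n-1}_I$ is itself a regular $\Lambda_B$-covering.

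The main point needing care is the extension on the boundary strata of $Q\times N_\T$: one must confirm that the product description of $\bar\phi_B$ persists, i.e., that a boundary face $Q'\subseteq Q$ with vertex set $\{b_j:j\in J\}$ is matched with $\Delta_J\subseteq\Delta$ and that the second-factor covering remains the same $N_\T\to\T^n$ of kernel $\Lambda_B$. This is formal once the moment-map conventions are pinned down, but it is where the bookkeeping lives.
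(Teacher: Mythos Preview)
Your proof is correct and follows the same idea as the paper's---both identify $\phi_B$ as a $\Lambda_B$-quotient via the underlying torus homomorphism---though you phrase it through character lattices and Pontryagin duality while the paper simply normalizes the first homogeneous coordinate to factor $\phi_B$ through $z\mapsto(z^{b_1-b_0},\dots,z^{b_n-b_0})$ and reads off the kernel directly. One small caveat: $\phi_B$ is not literally a group homomorphism (the coefficients $c_i$ spoil multiplicativity), only the translate of one by $[c_0,\dots,c_n]\in(\C^*)^{n+1}/\C^*$; you tacitly acknowledge this with the scalar $\prod c_i^{a_i}$, and it does not affect the fibers or the conclusion, but your opening sentence should say ``translate of a homomorphism'' rather than ``homomorphism.''
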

\begin{proof} To verify the statement that $\phi_B$ is the quotient map, first observe that  $\phi_B (z) = [c_0 z^{b_0}, \dots , c_n  z^{b_n}] = [c_0 , c_1 z^{b_1 - b_0} , \dots ,åc_n z^{b_n - b_0}]$.   This implies that $\phi_B$ factors through the homomorphism $N_{\mathbb{C}^*} \to (\mathbb{C}^*)^n$ given by $z \mapsto (z^{b_1 - b_0}, \dots, z^{b_n - b_0})$ which has kernel $\Lambda_B$. The extension to $\bar{\phi}_B$ follows immediately since $\Lambda_B \subset N_\mathbb{T}$ acts only on the $N_\mathbb{T}$ factor. 
	
	The fact that $\phi_B$ and $\bar{\phi}_B$ restrict to give covering maps from the hypersurface $\comhyp{f_I}$ to the pair-of-pants follows from the definition of $P^{n - 1}_I \subset (\mathbb{C}^*)^{n + 1} / \mathbb{C}^* \subset \mathbb{P}^n$ as the zero locus of $\sum_{i \in I} z_i = 0$. In particular, $f_I = \phi_B^* ( \sum_{i \in I} z_i )$ so that  $\phi_B$ restricts to $\comhyp{f_I}$ to give a well defined and an onto map.
\end{proof}

We now turn our attention to the phase tropical hypersurface of a marked simplex $(Q,B)$ with the function $\eta : B \to \mathbb{R}$ and a polynomial $f$. We first make an observation that the tropical hypersurface $\tropvar{\eta}$ depends on $\eta$ only up to a translation in $N_\mathbb{R}$. In particular, since $\eta$ is defined on a simplex, it is the restriction of an affine function $n_\eta + c$ on $M_\mathbb{R}$ to $B$ where $n_\eta \in N_\mathbb{R}$ and $c$ is a constant. In this instance, one observes that $\tropvar{\eta} = \tropvar{\mathbf{0}} + n_\eta$. The coamoeba $\coam{f_{I}}$ is independent of the function $\eta$ and only depend on $I$ and the arguments of the coefficients $\{c_i : i \in I\}$ of $f$. Thus, there is an elementary homeomorphism  $\tphyp{\eta}{f} \cong \tphyp{\mathbf{0}}{f}$
given by translating by $n_\eta$ in the $N_\mathbb{R}$ factor of $N_\mathbb{R} \times N_\mathbb{T}$. Consequently, the topology of the phase tropical hypersurface of a simplex is independent of the function $\eta$. For convenience, we choose $\eta_f : B \to \mathbb{R}$ to be the function $\eta_f (b_i) = - \log |c_i|$. 

\begin{lemma}\label{lemma:phasetropicalcover}
	The restriction of $\bar{\phi}_B$ to $\ctphyp{\eta_f}{f}$ is a covering map onto $\tt \bar \pp^{n-1}$.
\end{lemma}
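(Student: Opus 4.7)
The plan is to reduce the lemma to Lemma~\ref{lemma:complexcover}, which already identifies $\bar\phi_B$ as the quotient by the finite group $\Lambda_B$. I will verify two facts: (a) $\bar\phi_B$ sends $\ctphyp{\eta_f}{f}$ surjectively onto $\tt\bar\pp^{n-1}$, and (b) the $\Lambda_B$-action on $Q\times N_{\mathbb T}$ preserves $\ctphyp{\eta_f}{f}$ and acts freely on it. Since any finite group acting freely on a Hausdorff space yields a covering map to the quotient, and since the fibers of $\bar\phi_B$ over any point of $\tt\bar\pp^{n-1}$ are exactly $\Lambda_B$-orbits by Lemma~\ref{lemma:complexcover}, these two facts suffice.

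For (a), I would work stratum by stratum using the stratification \eqref{eq:pt_strat}. For a subset $I\subseteq\hat n$ with $|I|\ge 2$, the stratum $\tropstrat{\eta_f}{I}\times\coam{f_I}$ of $\tphyp{\eta_f}{f}$ is mapped by $\phi_B$ into $\pp_I\times\cc_I$: on the $N_{\mathbb R}$-factor the map $\mathbf{x}\mapsto [b_0(\mathbf x)+\log|c_0|,\ldots,b_n(\mathbf x)+\log|c_n|]$ identifies the defining condition ``$b_i(\mathbf x)-\eta_f(b_i)$ maximal for $i\in I$'' with ``$x_i$ maximal for $i\in I$''; on the $N_{\mathbb T}$-factor, Lemma~\ref{lemma:complexcover} says $\phi_B:\comhyp{f_I}\to P^{n-1}_I$ is surjective, and the argument map intertwines this with the map $\coam{f_I}\to\cc_I$, so that this latter map is also onto. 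Since $\bar\phi_B$ is continuous and constructed to be compatible with the moment maps on both sides, passing to closures in $Q\times N_{\mathbb T}$ and $\Delta\times {\mathbb T}^n$ gives $\bar\phi_B(\ctphyp{\eta_f}{f})=\tt\bar\pp^{n-1}$.

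For (b), fix $\lambda\in\Lambda_B$. As an element of $N_{\mathbb T}=N_{\mathbb R}/N$ it lifts to some $\lambda'\in\Xi_B^\vee$, and for every pair $i,j\in\hat n$ one has $(b_i-b_j)(\lambda')\in{\mathbb Z}$. Exponentiating, $\lambda^{b_i-b_j}=1$, so for each $I\subseteq\hat n$
\begin{equation*}
f_I(\lambda\cdot z)=\sum_{i\in I}c_i\lambda^{b_i}z^{b_i}=\lambda^{b_0}f_I(z),
\end{equation*}
showing $\lambda\cdot\comhyp{f_I}=\comhyp{f_I}$ and therefore $\lambda\cdot\coam{f_I}=\coam{f_I}$. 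Since $\Lambda_B$ fixes each tropical stratum $\tropstrat{\eta_f}{I}$ (acting trivially on the $N_{\mathbb R}$-factor), each stratum of $\tphyp{\eta_f}{f}$ is preserved, and by continuity the same holds for the closure $\ctphyp{\eta_f}{f}$. Freeness of the action is automatic: $\Lambda_B$ acts on $Q\times N_{\mathbb T}$ by translation in the torus factor, and translation by a nontrivial element of a compact abelian group has no fixed points.

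I expect the main technical nuisance, rather than a true obstacle, to be the passage to the compactification: one must confirm that $\Lambda_B$-invariance and surjectivity, which are transparent on the open phase tropical strata, survive closure in $Q\times N_{\mathbb T}$. This reduces to the observation that the moment map $\mu^B_{\mathbb R}$ is invariant under the $N_{\mathbb T}$-action (hence under $\Lambda_B$), so the closure operation commutes with the group action, and the stratifications \eqref{eq:pt_strat} on source and target correspond face-by-face under $\bar\phi_B$. Once this is in hand, the quotient presentation of Lemma~\ref{lemma:complexcover} immediately upgrades to a covering map $\bar\phi_B|_{\ctphyp{\eta_f}{f}}:\ctphyp{\eta_f}{f}\to\tt\bar\pp^{n-1}$ of degree $|\Lambda_B|$.
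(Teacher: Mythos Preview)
Your argument is correct and follows essentially the same route as the paper: both proofs verify stratum by stratum that $\phi_B$ carries $\tropstrat{\eta_f}{I}\times\coam{f_I}$ onto $\pp_I\times\cc_I$, invoking Lemma~\ref{lemma:complexcover} for the coamoeba factor, and then appeal to the fact that $\bar\phi_B$ is already the $\Lambda_B$-quotient. The paper packages the tropical-factor check into a commutative diagram with the $\Log$ map, while you write out the invariance $f_I(\lambda\cdot z)=\lambda^{b_0}f_I(z)$ and the free-action argument explicitly; these are cosmetic differences rather than a different strategy.
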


\begin{proof}
	View the map $\phi_B $ as a map from $N_\mathbb{R} \times N_\mathbb{T}$ to $(\mathbb{C}^*)^{n + 1} / \mathbb{C}^* \subset  \mathbb{P}^n$. Consider the linear map $\xi : \mathbb{R}^{n + 1} \to M_\mathbb{R}$ defined by $\xi (e_i) = b_i$ and let $\mathbf{c} = \sum_i \log |c_i| e_i \in \mathbb{R}^{n + 1}$. It is then clear that the diagram 
	\begin{equation}
	\label{diag:moments}
	\begin{tikzpicture}[baseline=(current  bounding  box.center), node distance=2cm, auto]
	\node (A) {$N_\mathbb{R} \times N_\mathbb{T}$};
	\node (B) [right of=A,xshift=1cm]{$(\mathbb{C}^*)^{n + 1} / \mathbb{C}^*$}; 
	\node (C) [below of=A]{$N_\mathbb{R}$};
	\node (D) [right of=C,xshift=1cm]{$\mathbb{R}^{n + 1} / \mathbb{R}$}; 
	\path[->,font=\scriptsize]
	(A) edge node[above] {$\phi_B$} (B)
	(C) edge node[below] {$\xi^\vee + \mathbf{c}$} (D)
	(A) edge node[left] {$\pi_1$} (C)
	(B) edge node[right] {$\Log$} (D);
	\end{tikzpicture}
	\end{equation}
	commutes where $\pi_1$ is projection to the first factor. Furthermore, the composition of the affine map $\xi^\vee + \mathbf{c}: N_\mathbb{R} \to \mathbb{R}^{n + 1}$ with every dual basis vector $e_i^\vee : \mathbb{R}^{n + 1} \to \mathbb{R}$ yields the map $b_i - \eta_f (b_i)$. Thus, the tropical polynomial for the standard pair-of-pants pulls back to $F_{\eta_f}$ and $\xi^\vee + \mathbf{c}$ maps the tropical hypersurface $\tropvar{f}$ to the tropical pair-of-pants $\pp^{n - 1}$. 
	
	For $I \subseteq \{0, \ldots, n\}$ with $|I| \geq 2$, denote by $Q_I$ the convex hull of the corresponding set in $B$. Then, over the face $\Phi (Q_I, I)$ of the tropical hypersurface, in the phase tropical hypersurface $\tphyp{f}{\eta_f}$, lies the coamoeba $\coam{f_{I}}$. Utilizing Lemma~\ref{lemma:complexcover} and a commutative diagram analogous to \eqref{diag:moments} with argument maps, we have that $\coam{f_{I}}$ maps to the coamoeba $\cc_I$. This implies the result.
\end{proof}
Combining Lemmas~\ref{lemma:complexcover} and \ref{lemma:phasetropicalcover}, we obtain an extension of Theorem~\ref{thm:homeo} to the simple hypersurface case.
\begin{theorem} \label{theorem:homeo_simple_hypersurfaces}
	Given a marked simplex $(Q, B)$ which affinely spans $M_\mathbb{R}$ and any $\eta : B \to \mathbb{R}$, there is a homeomorphism $\bar{\psi} :\ccomhyp{f} \to \ctphyp{\eta}{f}$.
\end{theorem}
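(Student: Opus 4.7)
By the translation argument preceding the theorem, we may assume $\eta = \eta_f$, so Lemmas~\ref{lemma:complexcover} and~\ref{lemma:phasetropicalcover} apply directly. These provide $\Lambda_B$-principal covering maps $\bar\phi_B\big|_{\ccomhyp{f}} : \ccomhyp{f} \to \bar P^{n-1}$ and $\bar\phi_B\big|_{\ctphyp{\eta_f}{f}} : \ctphyp{\eta_f}{f} \to \tt\bar\pp^{n-1}$, both arising as restrictions of the \emph{same} ambient map $\bar\phi_B : Q \times N_\T \to \Delta \times \T^n$. Indeed, inside $Q \times N_\T$ one has $\ccomhyp{f} = \bar\phi_B^{-1}(\bar P^{n-1})$ and $\ctphyp{\eta_f}{f} = \bar\phi_B^{-1}(\tt\bar\pp^{n-1})$, since $f$ is the pullback of the defining polynomial of $P^{n-1}$ under $\phi_B$.

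The plan is to lift the regular-CW isomorphism $\bar P^{n-1} \approx \tt\bar\pp^{n-1}$ underlying Theorem~\ref{thm:homeo} to one of the $\Lambda_B$-covers. The CW structures on $\bar P^{n-1}$ and $\tt\bar\pp^{n-1}$ are both indexed by the same poset $\mathcal W$, and for every $(\sigma,J) \in \mathcal W$ the closed cells $\bar\Phi_{\sigma,J}$ and $\bar\Psi_{\sigma,J}$ sit inside the \emph{same} $(\sigma,J)$-stratum of $\Delta \times \T^n$. Since these closed cells are balls (Proposition~\ref{prop:the_ball}), hence simply connected, $\bar\phi_B$ restricts to a trivial $\Lambda_B$-cover over each, so each closed cell lifts to $|\Lambda_B|$ disjoint closed cells in the respective total space. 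These assemble into regular CW decompositions of $\ccomhyp{f}$ and $\ctphyp{\eta_f}{f}$, whose lifted cells above a given $(\sigma,J)$-stratum of $\Delta \times \T^n$ are drawn from the \emph{same} finite set of connected components of $\bar\phi_B^{-1}$ of that stratum. Pairing $\bar\Phi_{\sigma,J}$ with $\bar\Psi_{\sigma,J}$ via Theorem~\ref{thm:homeo} and matching their lifts component by component via the ambient $\bar\phi_B$ then yields an isomorphism of face posets, and hence the desired homeomorphism $\bar\psi$.

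The main obstacle I foresee is the combinatorial verification that this stratum-by-stratum pairing is compatible with face inclusions: if a lifted cell in $\ccomhyp{f}$ lies in the closure of another, the paired lifts in $\ctphyp{\eta_f}{f}$ must satisfy the analogous closure relation. This reduces to the observation that once one fixes a closed lifted cell above $\bar\Phi_{\sigma,J}$, the labels (valued in connected components of preimages of ambient strata) of all boundary cells are determined by the unique trivialization of the $\Lambda_B$-cover over the simply connected closed cell. Because this trivialization is inherited from the same ambient cover $\bar\phi_B$ on both sides, the combinatorics agree, and the conclusion follows from the general fact that isomorphic regular CW complexes are homeomorphic.
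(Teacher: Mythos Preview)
Your approach is valid but takes a different route from the paper. The paper argues at the level of fundamental groups: it observes that the inclusions $P^{n-1}\hookrightarrow(\C^*)^{n+1}/\C^*$ and $\tt\pp^{n-1}\hookrightarrow(\C^*)^{n+1}/\C^*$ induce isomorphisms on $H_1$, that the homeomorphism of Theorem~\ref{thm:homeo} is compatible with these (``evident from the construction''), and that the $\Lambda_B$-cover, being abelian, is classified by a sublattice of $H_1$. Pulling back along the Hurewicz map then shows $\pi_1(\phi)$ preserves the relevant subgroup, so the homeomorphism lifts. This avoids any cell-by-cell bookkeeping.

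Your combinatorial lifting of the CW structure is a legitimate alternative, and your instinct to label lifts by components of $\bar\phi_B^{-1}(S_{\sigma,J})$ for the simply connected open ambient stratum $S_{\sigma,J}$ is the right one. However, the compatibility step is not quite pinned down by the phrase ``the trivialization is inherited from the same ambient cover.'' The closed ambient stratum $\Delta_J\times\bar\T^n_\sigma$ is generally \emph{not} simply connected (already for $n=2$ the closure $\bar\T^2_\sigma$ is a triangle with its three vertices identified, so $\pi_1\cong F_2$), so one cannot simply trivialize over it. What saves you is that both $\bar\Phi_{\sigma,J}$ and $\bar\Psi_{\sigma,J}$ have $\T^n$-projection contained in the octahedron $\O_\sigma$, which \emph{is} a genuine polytope in $\T^n$ and hence simply connected. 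Trivializing $\bar\phi_B$ over $\Delta_J\times\O_\sigma$ then forces the face maps on lifted $\Phi$-cells and lifted $\Psi$-cells to agree, since both are read off from the same ambient trivialization. With that observation made explicit, your argument goes through; without it, the compatibility claim has a gap. The paper's $\pi_1/H_1$ route sidesteps this issue entirely, at the cost of invoking a bit more general theory.
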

\begin{proof}
	It suffices to prove this theorem in the non-compact case.
We write $\phi : P^{n - 1} \to \tt \pp^{n -1}$ for the homeomorphism in Theorem~\ref{thm:homeo}. Note that both inclusions $P^{n -1} \hookrightarrow \left( \mathbb{C}^* \right)^{n + 1}/ \mathbb{C}^*$ and $\tt \pp^{n -1} \hookrightarrow \left( \mathbb{C}^* \right)^{n + 1}/ \mathbb{C}^*$ induce isomorphisms on first homology (and, on the fundamental group when $n > 2$). This follows by looking at the coamoeba (see, e.g., \cite{Sheridan})
for the complex and phase tropical pair-of-pants and its covering in $\mathbb{R}^{n + 1}$, which is simply connected for $n \geq 3$ and is the universal abelian cover for $n = 2$. Moreover, it is evident from the construction of $\phi$ that the diagram 
	\begin{equation}
	\label{diag:homology_commutative} 
	\begin{tikzpicture}[baseline=(current  bounding  box.center), node distance=2cm, auto]
	\node (A) {$H_1 (P^{n -1} ; \mathbb{Z} )$};
	\node (B) [right of=A,xshift=2cm]{$H_1 (\tt \pp^{n -1}; \mathbb{Z})$}; 
	\node (C) [below of=A, right of=A]{$H_1 (\left( \mathbb{C}^* \right)^{n + 1}/ \mathbb{C}^* ; \mathbb{Z}) \cong \mathbb{Z}^n$};
	\path[->,font=\scriptsize]
	(A) edge node[above] {$H(\phi)$} (B)
	(A) edge node[below, xshift=-.2cm] {$\cong$} (C)
	(B) edge node[below, xshift=.2cm] {$\cong$} (C);
	\end{tikzpicture}
	\end{equation}
	commutes.
	
	By Lemmas~\ref{lemma:complexcover} and \ref{lemma:phasetropicalcover}, $\comhyp{f}$ and $\tphyp{\eta}{f}$ are covers of $P^{n - 1}$ and $\tt \pp^{n -1}$ obtained by pulling back the subspaces along the cover $\phi_B : N_{\mathbb{C}^*} \to \left( \mathbb{C}^* \right)^{n + 1}/ \mathbb{C}^*$. This cover corresponds to a sublattice of $H_1 (\left( \mathbb{C}^* \right)^{n + 1}/ \mathbb{C}^* ; \mathbb{Z}) \cong \pi_1 (\left( \mathbb{C}^* \right)^{n + 1}/ \mathbb{C}^*)$. The commutativity of \eqref{diag:homology_commutative}, pulled back along the Hurewicz homomorphism, then implies that $\pi_1 (\phi)$ takes the normal subgroup associated to $\phi_B|_{\comhyp{f}}$ to that of $\phi_B|_{\tphyp{\eta}{f}}$ implying the result.
\end{proof}
The arguments given in Lemmas~\ref{lemma:complexcover} and \ref{lemma:phasetropicalcover} easily extend to the strata $\ccomstrat{f}{B^\prime}$ and $\ctropstrats{\eta}{f}{B^\prime}$. We record the stratified version of Theorem~\ref{theorem:homeo_simple_hypersurfaces} as a corollary.

\begin{corollary} \label{cor:strat_homeo} Given a marked simplex $(Q, B)$ contained in the marked Newton polytope of $f$, $\eta : B \to \mathbb{R}$ and any sub-simplex $(Q^\prime, B^\prime)$, there are homeomorphisms $\bar{\psi}_{B^\prime}$ and $\bar{\psi}_B$ for which
	\begin{equation}
	\label{diag:strat_commutative} 
	\begin{tikzpicture}[baseline=(current  bounding  box.center), node distance=2cm, auto]
	\node (A) {$\ccomstrat{f}{B^\prime}$};
	\node (B) [right of=A,xshift=1cm]{$\ctropstrats{\eta}{f}{B^\prime}$}; 
	\node (C) [below of=A]{$\ccomstrat{f}{B}$};
	\node (D) [right of=C,xshift=1cm]{$\ctropstrats{\eta}{f}{B}$}; 
	\path[->,font=\scriptsize]
	(A) edge node[above] {$\bar{\psi}_{B^\prime}$} (B)
	(C) edge node[below, xshift=-.2cm] {$\bar{\psi}_B$} (D);
	\path[right hook->,font=\scriptsize]
	(A) edge node[left] {$j_{B^\prime, B}$} (C)
	(B) edge node[right] {$i_{B^\prime, B}$} (D);
	\end{tikzpicture}
	\end{equation}
	commutes.
\end{corollary}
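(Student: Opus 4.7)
My plan is to upgrade Theorem~\ref{theorem:homeo_simple_hypersurfaces} to a stratified statement by observing that both cover constructions of Lemmas~\ref{lemma:complexcover} and~\ref{lemma:phasetropicalcover} restrict compatibly to the sub-simplex $(Q^\prime, B^\prime)$ of $(Q,B)$, and then to exploit the fact that the homeomorphism of Theorem~\ref{thm:homeo} is an isomorphism of regular CW complexes (Propositions~\ref{prop:complexCW} and~\ref{prop:tropicalCW}) and hence is automatically strata-preserving with respect to the boundary stratification of $\bar{P}^{n-1}$ and $\tt\bar\pp^{n-1}$ by faces of $\Delta$.

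First, I would construct $\bar{\psi}_B$. As recalled in the discussion preceding Theorem~\ref{thm:maintheorem}, a choice of lift $\iota_\mathbb{R}$ identifies $\ccomstrat{f}{B}$ with $\ccomhyp{\tilde{f}_B}\times N_\mathbb{T}(B)$ and $\ctropstrats{\eta}{f}{B}$ with $\ctphyp{\tilde{\eta}}{\tilde{f}_B}\times N_\mathbb{T}(B)$, where $(\tilde{Q}, \tilde{B})$ is the image of $(Q, B)$ in the saturation $M(B)$. Theorem~\ref{theorem:homeo_simple_hypersurfaces} applied to $\tilde{f}_B$ furnishes a homeomorphism $\bar{\psi}\colon \ccomhyp{\tilde{f}_B}\to \ctphyp{\tilde{\eta}}{\tilde{f}_B}$, and I set $\bar{\psi}_B := \bar{\psi}\times \id_{N_\mathbb{T}(B)}$. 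The same recipe with $B^\prime$ in place of $B$ produces $\bar{\psi}_{B^\prime}$.

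Second, to verify commutativity of \eqref{diag:strat_commutative}, I would unpack the maps $j_{B^\prime, B}$ and $i_{B^\prime, B}$ from \eqref{eq:comstrat_maps} and \eqref{eq:ptstrat_maps}: both are induced by the inclusion $Q^\prime\hookrightarrow Q$ on the base together with the surjection $N_\mathbb{T}(B^\prime)\twoheadrightarrow N_\mathbb{T}(B)$ coming from $N(B)\subseteq N(B^\prime)$. Under the covers $\bar\phi_B$ and $\bar\phi_{B^\prime}$ of Lemmas~\ref{lemma:complexcover} and~\ref{lemma:phasetropicalcover}, both vertical inclusions pull back from the boundary stratum inclusion in $\bar{P}^{n-1}$ (respectively $\tt\bar\pp^{n-1}$) corresponding to the sub-simplex of $\Delta$ indexed by $B^\prime$. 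Since the homeomorphism of Theorem~\ref{thm:homeo} is a CW-isomorphism, it carries the $B^\prime$-boundary stratum of $\bar{P}^{n-1}$ onto the $B^\prime$-boundary stratum of $\tt\bar\pp^{n-1}$; by construction $\bar{\psi}_B$ and $\bar{\psi}_{B^\prime}$ are lifts of these identifications through the covers, so the square commutes.

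The main obstacle will be the bookkeeping required to match the pull-back of the $B^\prime$-stratum of $\bar{P}^{n-1}$ under $\bar\phi_B$ with $\ccomstrat{f}{B^\prime}$ simultaneously for $B^\prime$ and $B$, so that the vertical arrows really correspond to the canonical stratum inclusions. This ultimately comes down to choosing the splittings $\iota_\mathbb{R}$ compatibly, which can be done by first fixing a splitting adapted to $B^\prime$ and extending it to one adapted to $B$; once this is arranged, the commutativity of \eqref{diag:strat_commutative} reduces to the strata-preservation of the CW-isomorphism $\bar{P}^{n-1}\approx\tt\bar\pp^{n-1}$, which is automatic.
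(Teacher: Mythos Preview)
Your approach is correct and is essentially an elaboration of the paper's own argument, which consists of the single remark that ``the arguments given in Lemmas~\ref{lemma:complexcover} and~\ref{lemma:phasetropicalcover} easily extend to the strata.'' You have correctly identified the two ingredients that make this extension work: the covering maps $\bar\phi_B$ restrict compatibly to boundary strata, and the homeomorphism of Theorem~\ref{thm:homeo}, being an isomorphism of regular CW complexes indexed by the poset $\mathcal W$, automatically respects the $J$-stratification by faces of $\Delta$.

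One small inaccuracy: your description of $j_{B',B}$ and $i_{B',B}$ as involving a ``surjection $N_\mathbb{T}(B')\twoheadrightarrow N_\mathbb{T}(B)$ coming from $N(B)\subseteq N(B')$'' is not quite right. The inclusion of saturated sublattices $N(B)\subseteq N(B')$ induces an \emph{inclusion} $N_\mathbb{T}(B)\hookrightarrow N_\mathbb{T}(B')$, not a surjection; any projection in the other direction depends on a choice of splitting. In fact both $\ccomstrat{f}{B'}$ and $\ccomstrat{f}{B}$ sit inside $Q\times N_\mathbb{T}$ with the full $N_\mathbb{T}$ as argument factor, and the maps $j_{B',B}$, $i_{B',B}$ are simply the identity on $N_\mathbb{T}$ together with the inclusion $Q'\hookrightarrow Q$ on the moment-map image. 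This does not affect your argument: once you choose the splittings for $B'$ and $B$ compatibly (as you indicate), the commutativity of \eqref{diag:strat_commutative} still reduces to the strata-preservation of the CW isomorphism $\bar P^{n-1}\approx\tt\bar\pp^{n-1}$, exactly as you say.
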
 

\subsection{Proof of Theorem~\ref{thm:maintheorem}} \label{sec:proof}
Having extended Theorem~\ref{thm:homeo} to simple hypersurfaces and their stratified compactifications, we now apply the results of Milkalkin \cite{PP} based on Viro's patchworking method \cite{Viro83}  to obtain Theorem~\ref{thm:maintheorem} for general hypersurfaces.
\begin{proof}[Proof of Theorem~\ref{thm:maintheorem}]
	One first observes that, since $f$ is assumed to be non-degenerate, $H_f$ is diffeomorphic to $H_g$ for any other non-degenerate polynomial $g$ with marked Newton polytope $(Q,A)$. To see this, one can compactify $N_{\mathbb{C}^*}$ to the toric variety $Y_Q$ and resolve all singularities of the toric boundary to obtain $X_Q$ with a normal crossing divisor $D$. Then Laurent polynomials with Newton polytope $Q$ may be identified with a dense open subset of  sections of the line bundle $\mathcal{O} (1)$ determined by $Q$. Moreover, the condition of non-degeneracy implies that the zero locus $Z_f$, which is an analytic compactification of $H_f$, transversely intersects the divisor $D$. Taking a family $\xi : [0, 1] \to \Gamma (X_Q, \mathcal{O} (1))$ of such non-degenerate sections, one may consider the incidence variety $\mathcal{Y} = \{(t, z ) : z \in Z_{\xi (t)}\}$ along with the function $\pi : \mathcal{Y} \to [0,1]$ induced by projection. Let $\mathcal{D} = \{(t, z) \in \mathcal{Y}: z \in D\}$. By the openness of the transversality condition,  $\pi$ and $\pi |_{D}$ are trivial families and, equipping $\mathcal{Y}$ with a connection for which $\mathcal{D}$ is horizontal and taking parallel transport gives a diffeomorphism of the pair $(Z_{\xi (0)}, Z_{\xi (0)} \cap D)$ with $(Z_{\xi (1)}, Z_{\xi (1)} \cap D)$. Excising the respective subspaces then produces the diffeomorphism.
	
	Next we note that, for $t \in \R_{>0}$ and small, the polynomial $f_t = \sum_{a \in A} c_a t^{\eta (a)} z^a$ is non-degenerate, regardless of the coefficients $\mathbf{c}_\mathbb{C} := (c_a) \in \mathbb{C}^A$ (cf. \cite{GKZ}). In particular, an alternative definition of a non-degenerate polynomial $f$ is that the principal $A$-determinant $E_A (f)$ is non-zero. By \cite[Theorem~10.1.4]{GKZ}, $E_A$ is a polynomial in the coefficients $(c_a)$ whose Newton polytope is the secondary polytope $\Sigma (A)$. It can be shown that $\Log$ of the coefficients $\mathbf{c}_\mathbb{C} t^{\eta}$ of $f_t$ lie in the interior of the cone dual to the triangulation defined by $\eta$ for sufficiently small $t$. This implies that, for such $f_t$, the $\Log (\mathbf{c}_\mathbb{C})$ lies outside the amoeba of $E_A$ implying $E_A (\mathbf{c}_\mathbb{C}) \ne 0$.
	
	To complete the proof we apply the reconstruction result \cite[Theorem~4]{PP}, adapted to the non-unimodular case. In this modified form, it asserts that for $\eta$ inducing the coherent triangulation $\mathcal{S} = \{(Q_\gamma, A_\gamma) : \gamma \in \Gamma\}$, there is homeomorphism between $\ccomhyp{f}$ and the topological direct limit (i.e. the colimit in the category of topological spaces). Thus we obtain 
	\begin{align} \label{eq:complex_limit}
	\ccomhyp{f} \approx \lim_{\to} \ccomstrat{f}{A_\gamma}.
	\end{align}
	Here we mean that one may consider the face lattice $\Gamma$ of $\mathcal{S}$ given by inclusions as a category and $\ccomstrat{f}{-}$ as a functor from $\Gamma$ to topological spaces. Then the limit of this functor is achieved by gluing simple hypersurfaces along common boundary strata.

	Using the dual subdivision of $N_\mathbb{R}$ to that given by the tropical hypersurface, one obtains a decomposition $\bar{\mathcal{H}}_\eta = \cup_{\gamma \in \Gamma} Y_\gamma$. We then have that $\ctphyp{\eta}{f} = \cup_{\gamma \in \Gamma} \mathcal{T}  Y_\gamma$, where $\mathcal{T} Y_\gamma$ is the part of the phase tropical hypersurface lying over $Y_\gamma$.  Each $\mathcal{T} Y_\gamma$ can be identified with a partially contracted $\ctropstrats{\eta}{f}{A_\gamma}$ which is clearly homeomorphic to the original  $\ctropstrats{\eta}{f}{A_\gamma}$. These identifications are compatible with inclusion maps $i_{A_\gamma, A_{\tilde{\gamma}}}$, when $(Q_\gamma, A_\gamma)$ if a face $(Q_{\tilde{\gamma}}, A_{\tilde{\gamma}})$. In particular, we have that 
	\begin{align} \label{eq:phase_limit}
	\ctphyp{\eta}{f} \approx \lim_{\to} \ctropstrats{\eta}{f}{A_\gamma}.
	\end{align}
	Here again we regard this as a limit of the functor from the poset category $\Gamma$ to topological spaces taking $\gamma$ to $\ctropstrats{\eta}{f}{A_\gamma}$ and inclusions of faces $(Q_\gamma, A_\gamma)$ of $(Q_{\tilde{\gamma}}, A_{\tilde{\gamma}})$ to $i_{A_\gamma, A_{\tilde{\gamma}}}$. 
	
	By Corollary~\ref{cor:strat_homeo}, we have a natural isomorphism of functors from $\ccomstrat{f}{-}$ to $\ctropstrats{\eta}{f}{-}$ implying their limits are homeomorphic. Equations~\eqref{eq:complex_limit} and \eqref{eq:phase_limit} then give that $\ccomhyp{f} \approx \ctphyp{\eta}{f}$. Removing the boundary strata on both sides of the equation gives the open case.
\end{proof}

\subsection{Monodromy} As Example~\ref{example:pthyp} illustrates, the polyhedral complex of a phase tropical hypersurface controls much of the geometry and topology of the complex hypersurface. As another example of this, we briefly sketch a potential application of Theorem~\ref{thm:maintheorem}. The function $\eta$ on $A$ arises in toric geometry in order to produce a degeneration $F_\eta : \mathcal{Y} \to \mathbb{C}$ of a family of hypersurfaces with  $F_\eta^{-1} (1) = H_f$ (see, e.g. \cite{mumford}). In this setting,  an important invariant is the holonomy  $\xi_\eta : H_f \to H_f$ associated to $F_\eta$ and a connection on $\mathcal{Y}$.  A geometric characterization of the isotopy class of $\xi_\eta$, or the monodromy of the degeneration, has been the subject of several investigations in tropical geometry (cf. \cite{MZh}, \cite{japan}). The class has been identified, but here we produce an explicit map on phase tropical hypersurface. 

We note that every phase tropical hypersurface $\tphyp{\eta}{f}$ carries a distinguished automorphism $\mathcal{T} \xi_\eta : \tphyp{\eta}{f} \to \tphyp{\eta}{f}$ defined as 
\begin{equation}
\mathcal{T} \xi_\eta (n_1, n_2) = (n_1, n_2 + n_1).
\end{equation} 
With a parametrized version of Theorem~\ref{thm:maintheorem} in hand, one may establish the following:
\begin{conjecture}
	The following diagram 
	\begin{equation}
	\label{diag:monodromy} 
	\begin{tikzpicture}[baseline=(current  bounding  box.center), node distance=2cm, auto]
	\node (A) {$\comhyp{f}$};
	\node (B) [right of=A]{$\tphyp{\eta}{f}$}; 
	\node (C) [below of=A]{$\comhyp{f}$};
	\node (D) [right of=C]{$\tphyp{\eta}{f}$}; 
	\path[->,font=\scriptsize]
	(A) edge node[above] {$\psi$} (B)
	(C) edge node[below] {$\psi$} (D)
	(A) edge node[left] {$\xi_\eta$} (C)
	(B) edge node[right] {$\mathcal{T} \xi_\eta$} (D);
	\end{tikzpicture}
	\end{equation}
commutes up to isotopy.
\end{conjecture}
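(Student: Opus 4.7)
The plan is to bootstrap the conjecture from a parametrized version of Theorem~\ref{thm:maintheorem} applied to the tautological degeneration $f_t = \sum_{a \in A} c_a t^{\eta(a)} z^a$ for $t$ in a punctured disk $D^\times$ around $0 \in \C$. First I would establish a continuous family of homeomorphisms $\psi_t : \comhyp{f_t} \to \tphyp{\eta}{f_t}$ over $D^\times$. The building blocks of $\psi$ in the proof of Theorem~\ref{thm:maintheorem} are the stratified homeomorphisms $\bar\psi_B$ of Corollary~\ref{cor:strat_homeo}, which are natural in the coefficients of $f_t$, and the colimit presentations in \eqref{eq:complex_limit} and \eqref{eq:phase_limit} are functorial once the patching data vary continuously. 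This continuity reduces to an inspection of $\phi : P^{n-1} \to \tt\pp^{n-1}$ from Theorem~\ref{thm:homeo}, which depends only on the face lattice $\mathcal W$ and is therefore rigid under small deformations of the coefficients.

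On the phase tropical side, the effect of the loop $s \mapsto t_0 e^{2\pi i s}$, $s \in [0,1]$, is essentially algebraic. As $t$ rotates once, each coefficient $c_a t^{\eta(a)}$ picks up the phase $e^{2\pi i \eta(a)}$, and this modification of $f_{A_\gamma, t}$ translates the coamoeba $\coam{f_{A_\gamma, t}}$ by a specific element of $N_\T$. Since $(Q_\gamma, A_\gamma)$ is a simplex, $\eta|_{A_\gamma}$ is the restriction of an affine linear function on $M_\R$ whose gradient $n_\gamma$ is well defined modulo $N_\R(A_\gamma)$, and the required translation is by $-2\pi n_\gamma$. The duality in \eqref{diag:commtropcomp} identifies $n_\gamma$ with the image in $N_\R/N_\R(A_\gamma)$ of any $n_1 \in \tropstrat{\eta}{\gamma}$, and because $\coam{f_{A_\gamma, t}}$ is invariant under the image of $N_\R(A_\gamma)$ in $N_\T$, this translation coincides on the fiber above $n_1$ with translation by $-2\pi n_1$. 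Assembling over all cells $\tropstrat{\eta}{\gamma}$ gives exactly the shear $\tt\xi_\eta(n_1, n_2) = (n_1, n_2 + n_1)$, up to the standard $2\pi$ normalization in $N_\T \cong N_\R/2\pi N$.

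On the complex side, $F_\eta$ has no critical values in a punctured neighborhood of $0$, so the family trivializes over $[t_0, 1]$ and the monodromy $\xi_\eta$ of the fiber over $t=1$ is conjugate by the trivialization to the monodromy of the fiber over $t_0$. Combining this transport with the continuous family $\psi_t$ and the tropical computation above yields the commutativity of \eqref{diag:monodromy} up to isotopy, after identifying both self-maps at the tropical end $t_0 \approx 0$.

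The principal obstacle is the parametrized version of Theorem~\ref{thm:maintheorem}. Proposition~\ref{prop:the_ball} identifies the closed strata $\bar\Phi_{\sigma, J}$ with balls via an $h$-cobordism argument in dimensions greater than four and by ad hoc polytope realizations in small dimensions, and neither delivers a canonically continuous family of ball identifications as the coefficients vary. Upgrading $\psi$ to a continuous family over $D^\times$ will require a family version of this recognition step and a choice of gluings in \eqref{eq:complex_limit} and \eqref{eq:phase_limit} that is coherent across parameters. Overcoming this technicality is the genuine content of the conjecture, once the algebraic identification of the tropical monodromy with the shear $\tt\xi_\eta$ is in place.
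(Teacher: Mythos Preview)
The paper does not prove this statement: it is presented explicitly as a \emph{conjecture}, with only the remark that ``with a parametrized version of Theorem~\ref{thm:maintheorem} in hand, one may establish'' it. There is no proof in the paper to compare against.

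Your proposal is precisely the strategy the paper gestures at, and you have correctly identified its content and its gap. The computation on the phase tropical side---that the loop $t\mapsto t_0 e^{2\pi i s}$ shifts each coamoeba $\coam{f_{A_\gamma}}$ by the gradient $n_\gamma$ of $\eta|_{A_\gamma}$, and that $n_\gamma$ agrees modulo $N_\R(A_\gamma)$ with any $n_1\in\tropstrat{\eta}{\gamma}$, yielding the shear $\tt\xi_\eta$---is sound (and implicitly requires $\eta$ to be integer-valued so that $\tt\xi_\eta$ is genuinely a self-map of $\tphyp{\eta}{f}$). But you yourself concede in your final paragraph that the essential step, a parametrized version of Theorem~\ref{thm:maintheorem}, remains open: the ball recognitions in Proposition~\ref{prop:the_ball} via $h$-cobordism are not canonical, and you have not shown how to make the homeomorphisms $\psi_t$ vary continuously in $t$. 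This is exactly why the paper states the result as a conjecture rather than a theorem. Your write-up is therefore an accurate outline of what would need to be done, not a proof.
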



\begin{thebibliography}{KKMS73}

\bibitem[Bj84]{Bjorner}
A. Bj\"orner.
\newblock {\em Posets, Regular CW Complexes and Bruhat Order}.
\newblock Europ. J. Combinatorics, Volume 5, Issue 1 (1984), 7-16.

%
\bibitem[Fo98]{Forman}
R. Forman.
\newblock {\em Morse theory for cell complexes}.
\newblock Adv. Math. 134 (1998), 90-145. 

\bibitem[FU14]{FuUe14}
M. Futaki and K. Ueda.
\newblock {\em Tropical Coamoeba and Torus-Equivariant Homological Mirror Symmetry for the Projective Space}.
\newblock Commun. Math. Phys. (2014) 332, 53-87.


\bibitem[GKZ94]{GKZ}
I. Gelfand, M. Kapranov, and A. Zelevinsky. 
\newblock {\em Discriminants, resultants, and multidimensional determinants}.
\newblock Birkh\"auser Boston, Inc., Boston, MA, 1994. 

\bibitem[GS67]{GS67}
B. Gr\"unbaum and V. P. Sreedharan.
\newblock {\em An enumeration of simplicial 4-polytopes with 8 vertices}.
\newblock J. Combinatorial Theory (1967), Vol. 2, Issue 4, 437-465.

\bibitem[KKMS73]{mumford}
G. Kempf, F. Knudsen, D. Mumford, and B. Saint-Donat. 
\newblock {\em Toroidal Embeddings}. 
\newblock Lect. Notes in Math. 339, Springer 1973. 

\bibitem[KN16]{KN16}
Y. Kim and M. Nisse.
\newblock {\em Geometry and a natural symplectic structure of phase tropical hypersurfaces}. 
\newblock arxiv.org/abs/1609.002181.

\bibitem[LW69]{LW}
A.~T.~Lundell and S.~Weingram. 
\newblock {\em The  Topology of CW Complexes}. 
\newblock Van Nostrand, New York, 1969.

\bibitem[MS15]{MS}
D. Maglagan and B. Sturmfels.
\newblock {\em Introduction to Tropical Geometry.}
\newblock Graduate Text in Math. Vol. 161, AMS 2015.

\bibitem[Mi04]{PP}
G. Mikhalkin.
\newblock {\em Decomposition into pairs-of-pants for complex algebraic hypersurfaces}. 
\newblock Topology, Vol. 43 (2004), Issue 5, 1035-1065.

\bibitem[MZh14]{MZh}
G.~Mikhalkin and I. Zharkov.
\newblock {\em Tropical Eigenwave and Intermediate Jacobians}, in 
{\em Homological Mirror Symmetry and Tropical Geometry}.
\newblock Vol 15, Lecture Notes of the Unione Matematica Italiana, 309-349.

\bibitem[NS13]{NS}
M. Nisse and F. Sottile.
\newblock{\em Non-Archimedean coamoebae}. In
\newblock {\em Tropical and non-{A}rchimedean geometry}. 73-91, Contemp. Math. 605, Amer. Math. Soc. 2013.

\bibitem[RSTZ14]{RSTZ}
H. Ruddat, N. Sibilla, D. Treumann and E. Zaslow. 
\newblock {\em Skeleta of affine hypersurfaces}.
\newblock Geometry \& Topology, 18 (2014), 1343-1395


\bibitem[Sh11]{Sheridan}
N. Sheridan.
\newblock {\em On the homological mirror symmetry conjecture for pairs of pants}.  
\newblock Jou of Differential Geometry, 89 (2011), no. 2, 271-367.

\bibitem[Vi83]{Viro83}
O. Viro.
\newblock {\em Gluing algebraic hypersurfaces and constructions of curves}. Tezisy Leningradskoj Mezhdunarodnoj Topologicheskoj Konferencii 1982, Nauka (1983), 149-197 (Russian). English translation of the main chapter in {\em Patchworking Real Algebraic Varieties}, arXiv:math/0611382.

\bibitem[Vi11]{Viro}
O. Viro.
\newblock {\em On basic concepts of tropical geometry}. 
\newblock Trudy Mat. Inst. Steklova, vol. 273, (2011), No.~1, 271-303. 

\bibitem[Ya15]{japan}
Y. Yamamoto.
\newblock {\em Geometric Monodromy around the Tropical Limit}. 
\newblock arxiv.org/abs/1509.00175.

\end{thebibliography}
\end{document}